\newtheorem{theo}{Theorem}[section]
\newtheorem{lemm}[theo]{Lemma}
\newtheorem{prop}[theo]{Proposition}
\theoremstyle{definition}
\newtheorem{defi}[theo]{Definition}
\newtheorem{rem}[theo]{Remark}
\newtheorem{exam}[theo]{Example}
\newtheorem{assum}{Assumption}
\newcommand{\bP}{\mathbb{P}}
\newcommand{\bE}{\mathbb{E}}
\newcommand{\bF}{\mathbb{F}}
\newcommand{\bR}{\mathbb{R}}
\newcommand{\bN}{\mathbb{N}}
\newcommand{\bX}{\mathbb{X}}
\newcommand{\bY}{\mathbb{Y}}
\newcommand{\bZ}{\mathbb{Z}}
\newcommand{\cF}{\mathcal{F}}
\newcommand{\cB}{\mathcal{B}}
\newcommand{\cM}{\mathcal{M}}
\newcommand{\cU}{\mathcal{U}}
\newcommand{\cY}{\mathcal{Y}}
\newcommand{\cZ}{\mathcal{Z}}
\newcommand{\cR}{\mathcal{R}}
\newcommand{\sL}{\mathscr{L}}
\newcommand{\rd}{\,\mathrm{d}}
\newcommand{\relmiddle}[1]{\mathrel{}\middle#1\mathrel{}}
\newcommand{\1}{\mbox{\rm{1}}\hspace{-0.25em}\mbox{\rm{l}}}
\newcommand{\comp}{\mathrm{c}}
\newcommand{\CD}{^\mathrm{C}\!D^\alpha_{0+}}
\providecommand{\keywords}[1]{\textbf{Keywords:} #1}
\def\widebar{\accentset{{\cc@style\underline{\mskip10mu}}}}
\numberwithin{equation}{section}
\def\rnum#1{\expandafter{\romannumeral #1}} 
\def\Rnum#1{\uppercase\expandafter{\romannumeral #1}}
\title{Infinite horizon backward stochastic Volterra integral equations and discounted control problems}
\author{
	Yushi Hamaguchi\thanks{
	Graduate School of Engineering Science, Department of Systems Innovation,
	Osaka University.
	1-3, Machikaneyama, Toyonaka, Osaka, Japan.
	\href{mailto:hmgch2950@gmail.com}{hmgch2950@gmail.com}}
}
\begin{document}
\maketitle

\begin{abstract}
Infinite horizon backward stochastic Volterra integral equations (BSVIEs for short) are investigated. We prove the existence and uniqueness of the adapted M-solution in a weighted $L^2$-space. Furthermore, we extend some important known results for finite horizon BSVIEs to the infinite horizon setting. We provide a variation of constant formula for a class of infinite horizon linear BSVIEs and prove a duality principle between a linear (forward) stochastic Volterra integral equation (SVIE for short) and an infinite horizon linear BSVIE in a weighted $L^2$-space. As an application, we investigate infinite horizon stochastic control problems for SVIEs with discounted cost functional. We establish both necessary and sufficient conditions for optimality by means of Pontryagin's maximum principle, where the adjoint equation is described as an infinite horizon BSVIE. These results are applied to discounted control problems for fractional stochastic differential equations and stochastic integro-differential equations.
\end{abstract}
\keywords
Infinite horizon backward stochastic Volterra integral equation;
stochastic Volterra integral equation;
duality principle;
discounted stochastic control.
\\
\textbf{2020 Mathematics Subject Classification}: 60H20; 45G05; 49K45; 49N15.

%60H20 Stochastic integral equations
%45G05 Singular nonlinear integral equations
%49K45 Optimality conditions for problems involving randomness
%49N15 Duality theory (optimization)

%93E20 Optimal stochastic control
%26A33 Fractional derivatives and integrals
%45D05 Volterra integral equations
%34A08 Fractional ordinary differential equations and fractional differential inclusions

%65C30 Numerical solutions to stochastic differential and integral equations
%60H07 Stochastic calculus of variations and the Malliavin calculus
%65C05 Monte Carlo methods
%65C30:Numerical solutions to stochastic differential and integral equations
%60H35 Computational methods for stochastic equations
%41A25 Rate of convergence, degree of approximation
%91G60 Numerical methods (including Monte Carlo methods)
%65Cxx Probabilistic methods, simulation and stochastic differential equations For theoretical aspects, see 68U20 and 60H35
%60H10 Stochastic ordinary differential equations (aspects of stochastic analysis)

%\tableofcontents

%%%%%%%%%%%%%%

\section{Introduction}

%%%%%%%%%%%%%%

Backward stochastic Volterra integral equations (BSVIEs for short) are Volterra-type extensions of backward stochastic differential equations (BSDEs for short). A class of BSVIEs introduced by Lin~\cite{Li02} and Yong~\cite{Yo06,Yo08} is the following:
\begin{equation}\label{finite horizon Type-I}
	Y(t)=\psi(t)+\int^T_tg(t,s,Y(s),Z(t,s))\rd s-\int^T_tZ(t,s)\rd W(s),\ t\in[0,T],
\end{equation}
where $T\in(0,\infty)$ is a given terminal time, $W(\cdot)$ is a Brownian motion, $\psi(\cdot)$ is a given (not necessarily adapted) stochastic process called the \emph{free term}, and $g$ is a given map called the \emph{driver}. The unknown we are looking for is the pair $(Y(\cdot),Z(\cdot,\cdot))$, where $Y(\cdot)=(Y(s))_{s\in[0,T]}$ and $Z(t,\cdot)=(Z(t,s))_{s\in[t,T]}$ are adapted for each $t\in[0,T]$. Sometimes a BSVIE of the form \eqref{finite horizon Type-I} is called a \emph{Type-I} BSVIE. If the free term $\psi(\cdot)$ and the driver $g$ of Type-I BSVIE~\eqref{finite horizon Type-I} does not depend on $t$, then the second term $Z(t,s)$ of the solution becomes independent of $t$, and \eqref{finite horizon Type-I} becomes the standard BSDE:
\begin{equation*}
	Y(t)=\psi+\int^T_tg(s,Y(s),Z(s))\rd s-\int^T_tZ(s)\rd W(s),\ t\in[0,T],
\end{equation*}
which can be written in the differential form:
\begin{equation*}
	\begin{cases}
	\mathrm{d}Y(t)=-g(t,Y(t),Z(t))\rd t+Z(t)\rd W(t),\ t\in[0,T],\\
	Y(T)=\psi.
	\end{cases}
\end{equation*}
BSDEs have very interesting applications in stochastic control, mathematical finance, PDE theory, and many other fields. Relevant theory of BSDEs can be found in the textbook of Zhang \cite{Zh17} and references cited therein. Type-I BSVIEs are important tools to study stochastic control and mathematical finance taking into account the time-inconsistency. For example, time-inconsistent dynamic risk measures and time-inconsistent recursive utilities can be modelled by the solutions to Type-I BSVIEs (see \cite{Yo07,KrOv17,Ag19,WaSuYo19}). Time-inconsistent stochastic control problems related to Type-I BSVIEs were studied by Wang and Yong~\cite{WaYo21}, Hern\'{a}ndez and Possama\"{i}~\cite{HePo20}, and the author~\cite{Ha21}. Also, Beissner and Rosazza Gianin \cite{BeRG21} applied Type-I BSVIEs to arbitrage-free asset pricing via a path of EMMs called an EMM-string.

As a further extension of Type-I BSVIEs, Yong~\cite{Yo06,Yo08} considered the following equation:
\begin{equation}\label{finite horizon Type-II}
	Y(t)=\psi(t)+\int^T_tg(t,s,Y(s),Z(t,s),Z(s,t))\rd s-\int^T_tZ(t,s)\rd W(s),\ t\in[0,T],
\end{equation}
which is called a \emph{Type-II} BSVIE. Unlike Type-I BSVIE~\eqref{finite horizon Type-I}, the driver $g$ of Type-II BSVIE~\eqref{finite horizon Type-II} depends on the term $Z(s,t)$ for $0\leq t\leq s\leq T$, which is a part of the solution. Therefore, to obtain a solution of Type-II BSVIE~\eqref{finite horizon Type-II}, one has to determine $Z(t,s)$ for $(t,s)\in[0,T]^2$. Since the equation \eqref{finite horizon Type-II} alone is not a closed form for $Y(\cdot)=(Y(t))_{t\in[0,T]}$ and $Z(\cdot,\cdot)=(Z(t,s))_{(t,s)\in[0,T]^2}$, we have to impose an additional constraint on the term $Z(t,s)$, $0\leq s\leq t\leq T$, for the well-posedness of the equation. The so-called \emph{adapted M-solution} is a solution concept of Type-II BEVIE~\eqref{finite horizon Type-II} which, by the definition, determines the values of $Z(t,s)$ for $0\leq s\leq t\leq T$ according to the martingale representation theorem $Y(t)=\bE[Y(t)]+\int^t_0Z(t,s)\rd W(s)$. This kind of solution concept was introduced by Yong~\cite{Yo08} and applied to the duality principle appearing in stochastic control problems for (forward) stochastic Volterra integral equations (SVIEs for short). Since then, Type-II BSVIEs have been found important to study stochastic control problems of SVIEs (see \cite{ChYo07,ShWaYo13,ShWaYo15,WaTZh17,WaT20}).

We emphasize that, in all the previous works mentioned above, BSVIEs and the related stochastic control problems are considered on a finite horizon $[0,T]$ with a given (deterministic) terminal time $T\in(0,\infty)$. In contrast, in this paper, we are interested in BSVIEs and stochastic control problems with $T=\infty$. More precisely, we consider an \emph{infinite horizon BSVIE} of the following form:
\begin{equation}\label{Type-II}
	Y(t)=\psi(t)+\int^\infty_te^{-\lambda(s-t)}g(t,s,Y(s),Z(t,s),Z(s,t))\rd s-\int^\infty_tZ(t,s)\rd W(s),\ t\geq0,
\end{equation}
where $\lambda\in\bR$ represents a given (not necessarily positive) ``\emph{discount rate}'' which determines the long-time behavior of the solution. This form of BSVIE appears for the first time in the literature. As in the finite horizon case, we often call equation~\eqref{Type-II} an infinite horizon Type-II BSVIE and the equation~\eqref{Type-II} with the driver $g(t,s,y,z_1,z_2)$ being independent of $z_2$ an infinite horizon Type-I BSVIE. Infinite horizon BSVIE~\eqref{Type-II} can be seen as an extension of an infinite horizon BSDE of the following form:
\begin{equation}\label{infinite horizon BSDE}
	\mathrm{d}Y(t)=-\bigl\{g(t,Y(t),Z(t))-\lambda Y(t)\bigr\}\rd t+Z(t)\rd W(t),\ t\geq0,
\end{equation}
which was studied in, for example, \cite{Pa99,PeSh00,FuTe04,FuHu06,Yi08,WuZh12}. Compared with the finite horizon case, in the infinite horizon BSDE~\eqref{infinite horizon BSDE}, the terminal condition for $Y(\cdot)$ is replaced by a (given) growth condition at infinity. The main difficulty appearing in the infinite horizon case is to identify the condition for the discount rate $\lambda$ which ensures the existence and uniqueness of the solution in a given weighted space. A similar difficulty also arises in the study of infinite horizon BSVIE~\eqref{Type-II}. Specifically, consider the situation where we are given a free term $\psi(\cdot)$ satisfying
\begin{equation*}
	\bE\Bigl[\int^\infty_0e^{2\eta t}|\psi(t)|^2\rd t\Bigr]<\infty
\end{equation*}
for some fixed weight $\eta\in\bR$. Then the adapted M-solution $(Y(\cdot),Z(\cdot,\cdot))$ to the infinite horizon BSVIE~\eqref{Type-II} should satisfy a similar growth condition at infinity:
\begin{equation*}
	\bE\Bigl[\int^\infty_0e^{2\eta t}|Y(t)|^2\rd t+\int^\infty_0e^{2\eta t}\int^\infty_0|Z(t,s)|^2\rd s\rd t\Bigr]<\infty.
\end{equation*}
A delicate question is the condition which ensures the existence and uniqueness of the adapted M-solution for \emph{any} free term $\psi(\cdot)$ in a \emph{given} weighted $L^2$-space. In this paper, we mainly focus on this problem. Intuitively speaking, in order to ensure the well-posedness in a large space (i.e., the weight $\eta\in\bR$ is a large negative number), the driver $g$ has to be discounted sufficiently at infinity (i.e., the discount rate $\lambda\in\bR$ should be a sufficiently large positive number). A criterion for this trade-off relationship between the weight $\eta$ and the discount rate $\lambda$ is determined by the driver $g$.

With the above intuitive discussions in mind, we prove the well-posedness of infinite horizon BSVIE~\eqref{Type-II} (see Theorem~\ref{theo: well-posedness BSVIE}). We also show that the adapted M-solutions of finite horizon BSVIEs converge to the adapted M-solution of the original infinite horizon BSVIE (see Theorem~\ref{theo: finite horizon}). Furthermore, we extend some important known results for finite horizon BSVIEs to the infinite horizon setting. We provide a variation of constant formula for an infinite horizon linear Type-I BSVIE (see Theorem~\ref{theo: explicit}), which is an extension of the results of Hu and {\O}ksendal~\cite{HuOk19} and  Wang, Yong, and Zhang~\cite{WaYoZh20}. We also show a duality principle between a linear SVIE and an infinite horizon linear Type-II BSVIE in a weighted $L^2$-space  (see Theorem~\ref{theo: duality}), which is an extension of that of Yong~\cite{Yo08}. We remark that our results mentioned above are not only extensions of the corresponding known results to infinite horizon cases, but also generalizations to unbounded coefficients cases. Indeed, in \cite{HuOk19,WaYoZh20} and \cite{Yo08}, they considered linear (finite horizon) BSVIEs with bounded coefficients, while in this paper we treat linear infinite horizon BSVIEs with unbounded coefficients. These generalizations are important to apply the results to stochastic control problems for a forward SVIEs with singular coefficients.

As an application of infinite horizon BSVIEs, we investigate an infinite horizon optimal control problem for a forward SVIE. There are many applications of SVIEs in different research areas concerned with the effects of memory and delay, such as stochastic production management models (see \cite{WaTZh17}), epidemic models (see \cite{Cr85,PaPa20}), and rough volatility models in mathematical finance (see \cite{GaJaRo18}). In this paper, we define the state process $X^u(\cdot)$ corresponding to a given control process $u(\cdot)$ by the solution to a controlled SVIE of the form
\begin{equation*}
	X^u(t)=\varphi(t)+\int^t_0b(t,s,X^u(s),u(s))\rd s+\int^t_0\sigma(t,s,X^u(s),u(s))\rd W(s),\ t\geq0.
\end{equation*}
The objective is to minimize a discounted cost functional
\begin{equation*}
	J_\lambda(u(\cdot)):=\bE\Bigl[\int^\infty_0e^{-\lambda t}h(t,X^u(t),u(t))\rd t\Bigr]
\end{equation*}
over all admissible control processes $u(\cdot)$. The coefficients $b$ and $\sigma$ are allowed to be singular at the diagonal $t=s$. The state process $X^u(\cdot)$ is in general neither Markovian nor semimartingales and includes the fractional Brownian motion with Hurst parameter smaller than $1/2$ as a special case. In this framework, by using the duality principle mentioned above, we provide a necessary condition for optimality by means of Pontryagin's maximum principle (see Theorem~\ref{theo: maximum principle}). The corresponding adjoint equation can be expressed as an infinite horizon linear Type-II BSVIE with unbounded coefficients. We also provide sufficient conditions for optimality in terms of the related Hamiltonian (see Theorem~\ref{theo: sufficient maximum principle}). These results can be applied to infinite horizon stochastic control problems of linear-quadratic types (see Example~\ref{exam: LQ}), fractional stochastic differential equations (see Example~\ref{exam: FSDE}), and stochastic integro-differential equations (see Example~\ref{exam: integro-differential}).

Let us further discuss some novelties of this paper compared with the related works. As we mentioned above, this paper is the first to investigate BSVIEs on the infinite horizon. Furthermore, our method to show the well-posedness is completely different from the ones in the literature of (finite horizon) BSVIEs. A main technique used in the literature is a fixed point argument based on some estimates for auxiliary BSDE systems (see, for example, \cite{Yo08}). In contrast, our method to show the well-posedness of infinite horizon BSVIEs does not rely on the theory of BSDEs or It\^o's calculus. Instead, we first show a priori estimates for the adapted M-solutions by some very simple calculations and then prove the well-posedness by using the so-called \emph{method of continuation} which is known as a fundamental technique in the theory of forward-backward SDEs (see the textbook of Ma and Yong~\cite{MaYo99}).

Infinite horizon stochastic control problems for stochastic differential equations (SDEs for short) with discounted cost functional have been studied in, for example, \cite{FuTe04,MaVe14,OrVe17}. In these papers, infinite horizon BSDEs play central roles in different ways. Fuhrman and Tessitore~\cite{FuTe04} investigated a relationship between an infinite horizon BSDE and an elliptic Hamilton--Jacobi--Bellman equation to obtain the optimal control. Maslowski and Veverka~\cite{MaVe14} obtained a sufficient maximum principle, and Orrieri and Veverka~\cite{OrVe17} established a necessary maximum principle for an SDE with dissipative coefficients. Our results are extensions of \cite{MaVe14,OrVe17} to the Volterra setting. Indeed, in the SDEs setting, the adjoint equation which we obtain in the general Volterra setting is reduced to the infinite horizon BSDE considered in the aforementioned papers (see Example~\ref{exam: SDE}).

Concerned with stochastic control problems for SVIEs, Yong~\cite{Yo08} and Wang~\cite{WaT20} obtained a necessary condition for optimality by means of the stochastic maximum principle, where the (first order) adjoint equation can be described as a finite horizon BSVIE with bounded coefficients. Shi, Wang, and Yong~\cite{ShWaYo13} considered a mean-field type stochastic control problem and obtained a necessary condition by using a mean-field BSVIE. Agram and {\O}ksendal~\cite{AgOk15} proposed a Malliavin calculus technique to obtain the optimal control. Chen and Yong~\cite{ChYo07} and Wang~\cite{WaT18} studied linear-quadratic control problems for SVIEs. The aforementioned papers are restricted to finite horizon cases. Also, since some kinds of regularity are required for the coefficients, they exclude the case of singular coefficients such as the fractional Brownian motion with Hurst parameter $H<1/2$. Concerned with the later problem, Abi Jaber, Miller, and Pham~\cite{AbMiPh19} considered a finite horizon linear-quadratic control problem for an SVIE of convolution type, which includes a singular coefficients case. They used a technique of the so-called Markovian lift of a stochastic Volterra process and obtained the optimal feedback control, depending on non-standard Banach space-valued Riccati equations. Their results heavily rely on the special structure of the state process and the cost functional. Compared with the above papers, we consider an infinite horizon stochastic control problem with a discounted cost functional of a general form including the quadratic cost, where the controlled dynamics of the state process is described as an SVIE with nonlinear, non-convolution type, and singular coefficients. Therefore, apart from the extension to the infinite horizon case, our results give a new insight at this general setting.

This paper is organized as follows. In Section~\ref{section: SVIE}, we briefly discuss the well-posedness of (forward) SVIEs with singular coefficients. In Section~\ref{section: BSVIE}, we prove the well-posedness of infinite horizon BSVIEs. We also provide a convergence result for finite horizon BSVIEs, a variation of constant formula for an infinite horizon linear Type-I BSVIE, and a duality principle between a linear SVIE and an infinite horizon linear Type-II BSVIE in a weighted $L^2$-space. In Section~\ref{section: control}, as an application of infinite horizon BSVIEs, we study an infinite horizon stochastic control problem for an SVIE with singular coefficients. Some examples including controlled fractional stochastic differential equations and controlled stochastic integro-differential equations are presented in Section~\ref{subsection: example}.

%%%%%%%%%%%%%%

\subsection*{Notation}

%%%%%%%%%%%%%%

For each $d_1,d_2\in\bN$, we denote the space of $(d_1\times d_2)$-matrices by $\bR^{d_1\times d_2}$, which is endowed with the Frobenius norm denoted by $|\cdot|$. We define $\bR^{d_1}:=\bR^{d_1 \times 1}$, that is, each element of $\bR^{d_1}$ is understood as a column vector. We denote by $\langle\cdot,\cdot\rangle$ the usual inner product in a Euclidean space. For each  matrix $A$, $A^\top$ denotes the transpose of $A$.

$(\Omega,\cF,\bP)$ is a complete probability space and $W(\cdot)=(W_1(\cdot),\dots,W_d(\cdot))^\top$ is a $d$-dimensional Brownian motion on $(\Omega,\cF,\bP)$ with $d\in\bN$. $\bF=(\cF_t)_{t\geq0}$ denotes the filtration generated by $W(\cdot)$ and augmented by $\bP$. We define $\cF_\infty:=\bigvee_{t\geq0}\cF_t$. For each $t\geq0$, $\bE_t[\cdot]:=\bE[\cdot|\cF_t]$ is the conditional expectation given by $\cF_t$.

We define
\begin{equation*}
	\Delta[0,\infty):=\{(t,s)\in[0,\infty)^2\,|\,0\leq t\leq s<\infty\}\ \text{and}\ \Delta^\comp[0,\infty):=\{(t,s)\in[0,\infty)^2\,|\,0\leq s\leq t<\infty\}.
\end{equation*}

Let $U$ be a nonempty Borel subset of a Euclidean space, and let $p\in[1,\infty)$ and $\beta\in\bR$ be fixed. $L^p_{\cF_\infty}(\Omega;U)$ denotes the set of $U$-valued and $\cF_\infty$-measurable $L^p$-random variables. Define
\begin{equation*}
	L^{p,\beta}(0,\infty;U):=\Bigl\{f:[0,\infty)\to U\,|\,\text{$f$ is measurable},\ \int^\infty_0e^{p\beta t}|f(t)|^p\rd t<\infty\Bigr\}.
\end{equation*}
Note that $L^{p',\beta'}(0,\infty;U)\subset L^{p,\beta}(0,\infty;U)$ if $p\leq p'$ and $\beta<\beta'$. However, $p<p'$ does not imply $L^{p',\beta}(0,\infty;U)\subset L^{p,\beta}(0,\infty;U)$. We define
\begin{equation*}
	L^p(0,\infty;U):=L^{p,0}(0,\infty;U)\ \text{and}\ L^{p,*}(0,\infty;U):=\bigcup_{\beta\in\bR}L^{p,\beta}(0,\infty;U).
\end{equation*}
Also, we define
\begin{equation*}
	L^{p,\beta}_{\cF_\infty}(0,\infty;U):=\Bigl\{f:\Omega\times[0,\infty)\to U\,|\,\text{$f$ is $\cF_\infty\otimes\cB([0,\infty))$-measurable},\ \bE\Bigl[\int^\infty_0e^{p\beta t}|f(t)|^p\rd t\Bigr]<\infty\Bigr\},
\end{equation*}
\begin{equation*}
	L^{p,\beta}_\bF(0,\infty;U):=\{f(\cdot)\in L^{p,\beta}_{\cF_\infty}(0,\infty;U)\,|\,f(\cdot)\ \text{is adapted}\},
\end{equation*}
and
\begin{equation*}
	\sL^{p,\beta}_\bF(0,\infty;U):=\left\{f:\Omega\times[0,\infty)^2\to U\relmiddle|\begin{aligned}&\text{$f$ is $\cF\otimes\cB([0,\infty)^2)$-measurable},\\&\text{$f(t,\cdot)$ is adapted for a.e.\ $t\geq0$},\\&\bE\Bigl[\int^\infty_0e^{p\beta t}\int^\infty_0|f(t,s)|^p\rd s\rd t\Bigr]<\infty\end{aligned}\right\}.
\end{equation*}
If $U$ is a Euclidean space, these spaces are Banach spaces with the norms, for example,
\begin{equation*}
	L^{p,\beta}_{\cF_\infty}(0,\infty;U)\ni f(\cdot)\mapsto \bE\Bigl[\int^\infty_0e^{p\beta t}|f(t)|^p\rd t\Bigr]^{1/p}
\end{equation*}
and
\begin{equation*}
	\sL^{p,\beta}_\bF(0,\infty;U)\ni f(\cdot,\cdot)\mapsto\bE\Bigl[\int^\infty_0e^{p\beta t}\int^\infty_0|f(t,s)|^p\rd s\rd t\Bigr]^{1/p}.
\end{equation*}
$L^p_{\cF_\infty}(0,\infty;U)$, $L^p_\bF(0,\infty;U)$, $\sL^p_\bF(0,\infty;U)$, $L^{p,*}_{\cF_\infty}(0,\infty;U)$, $L^{p,*}_\bF(0,\infty;U)$, and $\sL^{p,*}_\bF(0,\infty;U)$ are defined by similar manners as before.

For each $z(\cdot)\in L^2_\bF(0,\infty;\bR^{d_1\times d})$ with $d_1\in\bN$, the stochastic integral of $z(\cdot)$ with respect to the $d$-dimensional Brownian motion $W(\cdot)$ is defined by
\begin{equation*}
	\int^t_0z(s)\rd W(s):=\sum^d_{k=1}\int^t_0z^k(s)\rd W_k(s),\ t\geq0,
\end{equation*}
which is a $d_1$-dimensional square-integrable martingale. Here and elsewhere, for each $z\in\bR^{d_1\times d}$, $z^k\in\bR^{d_1}$ denotes the $k$-th column vector for $k=1,\dots,d$.

We denote by $\cM^{2,\beta}_\bF(0,\infty;\bR^{d_1}\times\bR^{d_1\times d})$ with $d_1\in\bN$ and $\beta\in\bR$ the set of pairs $(y(\cdot),z(\cdot,\cdot))\in L^{2,\beta}_\bF(0,\infty;\bR^{d_1})\times\sL^{2,\beta}_\bF(0,\infty;\bR^{d_1\times d})$ such that
\begin{equation*}
	y(t)=\bE[y(t)]+\int^t_0z(t,s)\rd W(s)
\end{equation*}
for a.e.\ $t\geq0$, a.s. Clearly, the space $\cM^{2,\beta}_\bF(0,\infty;\bR^{d_1}\times\bR^{d_1\times d})$ is a closed subspace of the product space $L^{2,\beta}_\bF(0,\infty;\bR^{d_1})\times\sL^{2,\beta}_\bF(0,\infty;\bR^{d_1\times d})$, and thus it is a Hilbert space. As before, we define $\cM^2_\bF(0,\infty;\bR^{d_1}\times\bR^{d_1\times d}):=\cM^{2,0}_\bF(0,\infty;\bR^{d_1}\times\bR^{d_1\times d})$ and $\cM^{2,*}_\bF(0,\infty;\bR^{d_1}\times\bR^{d_1\times d}):=\bigcup_{\beta\in\bR}\cM^{2,\beta}_\bF(0,\infty;\bR^{d_1}\times\bR^{d_1\times d})$.

Lastly, for each $f\in L^{p,*}(0,\infty;\bR_+)$, define $[f]_p:\bR\to[0,\infty]$ by
\begin{equation*}
	[f]_p(\rho):=\Bigl(\int^\infty_0e^{-p\rho t}f(t)^p\rd t\Bigr)^{1/p},\ \rho\in\bR.
\end{equation*}
Note that the function $[f]_p$ is non-increasing and satisfies $\lim_{\rho\to\infty}[f]_p(\rho)=0$.

%%%%%%%%%%%%%%

\section{Forward SVIEs with singular coefficients}\label{section: SVIE}

%%%%%%%%%%%%%%

Before studying infinite horizon BSVIEs, we investigate the following (forward) SVIE:
\begin{equation}\label{SVIE}
	X(t)=\varphi(t)+\int^t_0b(t,s,X(s))\rd s+\int^t_0\sigma(t,s,X(s))\rd W(s),\ t\geq0,
\end{equation}
where $\varphi(\cdot)\in L^{2,*}_\bF(0,\infty;\bR^n)$ is a given $n$-dimensional adapted process with $n\in\bN$, and $b,\sigma$ are given maps. We impose the following assumptions on the coefficients $b$ and $\sigma$:

%% Assumption

\begin{assum}\label{assum: SVIE}
\begin{itemize}
\item[(i)]
$b:\Omega\times\Delta^\comp[0,\infty)\times\bR^n\to\bR^n$ and $\sigma:\Omega\times\Delta^\comp[0,\infty)\times\bR^n\to\bR^{n\times d}$ are measurable.
\item[(ii)]
$(b(t,s,x))_{s\in[0,t]}$ and $(\sigma(t,s,x))_{s\in[0,t]}$ are adapted for each $t\in[0,\infty)$ and $x\in\bR^n$.
\item[(iii)]
There exist $K_b\in L^{1,*}(0,\infty;\bR_+)$ and $K_\sigma\in L^{2,*}(0,\infty;\bR_+)$ such that
\begin{equation*}
	|b(t,s,x)-b(t,s,x')|\leq K_b(t-s)|x-x'|\ \text{and}\ |\sigma(t,s,x)-\sigma(t,s,x')|\leq K_\sigma(t-s)|x-x'|
\end{equation*}
for any $x,x'\in\bR^n$, for a.e.\ $(t,s)\in\Delta^\comp[0,\infty)$, a.s.
\item[(iv)]
$b(t,s,0)=0$ and $\sigma(t,s,0)=0$ for a.e.\ $(t,s)\in\Delta^\comp[0,\infty)$, a.s.
\end{itemize}
\end{assum}

%% Remark

\begin{rem}
We note that $b$ and $\sigma$ can be singular at the diagonal $t=s$ in the sense that the Lipschitz coefficients $K_b(\tau)$ and $K_\sigma(\tau)$ allows to diverge as $\tau\downarrow0$. Also, they are allowed to diverge as $\tau\uparrow\infty$. For example, we can choose $b(t,s,x)=K(t-s)b(x)$ and $\sigma(t,s,x)=K(t-s)\sigma(x)$ for some Lipschitz continuous functions $b(x)$ and $\sigma(x)$ and the fractional kernel $K(\tau)=\tau^{\alpha-1}$ with $\alpha\in(1/2,1)$.
\end{rem}

We say that a process $X(\cdot)$ is a solution to SVIE~\eqref{SVIE} if $X(\cdot)$ is in $L^{2,*}_\bF(0,\infty;\bR^n)$ and satisfies the equality for a.e.\ $t\geq0$, a.s. A delicate problem is the identification of the weight of the space where the solution uniquely exists. The following lemma is useful for the analysis of SVIEs.

%% Lemma

\begin{lemm}\label{lemm: coefficients SVIE}
Suppose that Assumption~\ref{assum: SVIE} holds. Fix $\mu\in\bR$ with $[K_b]_1(\mu)+[K_\sigma]_2(\mu)<\infty$. Then for any $x(\cdot),x'(\cdot)\in L^{2,-\mu}_\bF(0,\infty;\bR^n)$, it holds that
\begin{equation}\label{lemm: coefficients SVIE: 1}
	\bE\Bigl[\int^\infty_0e^{-2\mu t}\Bigl(\int^t_0\bigl|b(t,s,x(s))-b(t,s,x'(s))\bigr|\rd s\Bigr)^2\rd t\Bigr]^{1/2}\leq[K_b]_1(\mu)\bE\Bigl[\int^\infty_0e^{-2\mu t}|x(t)-x'(t)|^2\rd t\Bigr]^{1/2}
\end{equation}
and
\begin{equation}\label{lemm: coefficients SVIE: 2}
	\bE\Bigl[\int^\infty_0e^{-2\mu t}\int^t_0\bigl|\sigma(t,s,x(s))-\sigma(t,s,x'(s))\bigr|^2\rd s\rd t\Bigr]^{1/2}\leq [K_\sigma]_2(\mu)\bE\Bigl[\int^\infty_0e^{-2\mu t}|x(t)-x'(t)|^2\rd t\Bigr]^{1/2}.
\end{equation}
\end{lemm}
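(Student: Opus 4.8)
The plan is to reduce both estimates to the pointwise Lipschitz bounds in Assumption~\ref{assum: SVIE}(iii) and then exploit the convolution structure that is hidden inside the exponential weight. Throughout I would write $\delta(s):=|x(s)-x'(s)|$ and note that, by hypothesis, $\bE\bigl[\int^\infty_0e^{-2\mu s}\delta(s)^2\rd s\bigr]<\infty$. Since for each fixed $\omega$ both inequalities are deterministic statements about time integrals with nonnegative integrands, I would establish them pathwise first and then take expectations at the very end, all interchanges of integration being justified by Tonelli's theorem.

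For \eqref{lemm: coefficients SVIE: 1}, applying the Lipschitz bound $|b(t,s,x(s))-b(t,s,x'(s))|\leq K_b(t-s)\delta(s)$ reduces the task to estimating $\int^\infty_0e^{-2\mu t}\bigl(\int^t_0K_b(t-s)\delta(s)\rd s\bigr)^2\rd t$ pathwise. The key observation is to factor $e^{-\mu t}=e^{-\mu(t-s)}e^{-\mu s}$ inside the inner integral, so that $e^{-\mu t}\int^t_0K_b(t-s)\delta(s)\rd s$ equals the convolution $(\widetilde K_b*\widetilde\delta)(t)$ of $\widetilde K_b(\tau):=e^{-\mu\tau}K_b(\tau)$ and $\widetilde\delta(s):=e^{-\mu s}\delta(s)$, both supported on $[0,\infty)$. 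Young's convolution inequality then gives $\|\widetilde K_b*\widetilde\delta\|_{L^2}\leq\|\widetilde K_b\|_{L^1}\|\widetilde\delta\|_{L^2}$, where by the very definition of $[\cdot]_p$ one has $\|\widetilde K_b\|_{L^1}=[K_b]_1(\mu)$ and $\|\widetilde\delta\|_{L^2}=\bigl(\int^\infty_0e^{-2\mu s}\delta(s)^2\rd s\bigr)^{1/2}$. Taking expectations and square roots yields \eqref{lemm: coefficients SVIE: 1}.

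For \eqref{lemm: coefficients SVIE: 2}, the Lipschitz bound $|\sigma(t,s,x(s))-\sigma(t,s,x'(s))|^2\leq K_\sigma(t-s)^2\delta(s)^2$ reduces the left-hand side to $\bE\bigl[\int^\infty_0e^{-2\mu t}\int^t_0K_\sigma(t-s)^2\delta(s)^2\rd s\rd t\bigr]$. Here no convolution inequality is needed: I would interchange the order of integration (Tonelli) to integrate in $t$ first, substitute $\tau=t-s$, and factor the weight as $e^{-2\mu t}=e^{-2\mu\tau}e^{-2\mu s}$. The resulting inner integral $\int^\infty_0e^{-2\mu\tau}K_\sigma(\tau)^2\rd\tau$ is precisely $[K_\sigma]_2(\mu)^2$, leaving $[K_\sigma]_2(\mu)^2\,\bE\bigl[\int^\infty_0e^{-2\mu s}\delta(s)^2\rd s\bigr]$; taking square roots gives the claim.

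Neither step is a serious obstacle: the entire content is the bookkeeping of the exponential weight, which in the first estimate is exactly what converts the Volterra kernel into a genuine convolution amenable to Young's inequality, and in the second is what lets the $t$-integral decouple after Fubini. The only point requiring genuine care is the finiteness of $\|\widetilde K_b\|_{L^1}$ and of $\int^\infty_0e^{-2\mu\tau}K_\sigma(\tau)^2\rd\tau$, which is guaranteed by the standing hypothesis $[K_b]_1(\mu)+[K_\sigma]_2(\mu)<\infty$; this same finiteness is what legitimizes the application of Young's inequality and of Tonelli's theorem.
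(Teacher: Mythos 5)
Your proposal is correct and follows essentially the same route as the paper: bound by the Lipschitz kernels, absorb the exponential weight to reveal the convolution structure, apply Young's inequality (for the drift term) and Tonelli plus the substitution $\tau=t-s$ (for the diffusion term), and take expectations at the end. The paper phrases the second estimate as another application of Young's convolution inequality, but for nonnegative integrands that is exactly your Tonelli computation, so the two arguments coincide.
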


%% Proof

\begin{proof}
By using the assumptions, together with Young's convolution inequality, we have
\begin{align*}
	&\bE\Bigl[\int^\infty_0e^{-2\mu t}\Bigl(\int^t_0\bigl|b(t,s,x(s))-b(t,s,x'(s))\bigr|\rd s\Bigr)^2\rd t\Bigr]^{1/2}\\
	&\leq\bE\Bigl[\int^\infty_0e^{-2\mu t}\Bigl(\int^t_0K_b(t-s)|x(s)-x'(s)|\rd s\Bigr)^2\rd t\Bigr]^{1/2}\\
	&=\bE\Bigl[\int^\infty_0\Bigl(\int^t_0e^{-\mu(t-s)}K_b(t-s)e^{-\mu s}|x(s)-x'(s)|\rd s\Bigr)^2\rd t\Bigr]^{1/2}\\
	&\leq\bE\Bigl[\Bigl(\int^\infty_0e^{-\mu t}K_b(t)\rd t\Bigr)^2\int^\infty_0e^{-2\mu t}|x(t)-x'(t)|^2\rd t\Bigr]^{1/2}\\
	&=[K_b]_1(\mu)\bE\Bigl[\int^\infty_0e^{-2\mu t}|x(t)-x'(t)|^2\rd t\Bigr]^{1/2}.
\end{align*}
Thus the estimate~\eqref{lemm: coefficients SVIE: 1} holds. Similarly, we have
\begin{align*}
	&\bE\Bigl[\int^\infty_0e^{-2\mu t}\int^t_0|\sigma(t,s,x(s))-\sigma(t,s,x'(s))|^2\rd s\rd t\Bigr]^{1/2}\\
	&\leq \bE\Bigl[\int^\infty_0e^{-2\mu t}\int^t_0K_\sigma(t-s)^2|x(s)-x'(s)|^2\rd s\rd t\Bigr]^{1/2}\\
	&=\bE\Bigl[\int^\infty_0\int^t_0e^{-2\mu(t-s)}K_\sigma(t-s)^2e^{-2\mu s}|x(s)-x'(s)|^2\rd s\rd t\Bigr]^{1/2}\\
	&\leq\bE\Bigl[\int^\infty_0e^{-2\mu t}K_\sigma(t)^2\rd t\,\int^\infty_0e^{-2\mu t}|x(t)-x'(t)|^2\rd t\Bigr]^{1/2}\\
	&=[K_\sigma]_2(\mu)\bE\Bigl[\int^\infty_0e^{-2\mu t}|x(t)-x'(t)|^2\rd t\Bigr]^{1/2}.
\end{align*}
Thus the estimate~\eqref{lemm: coefficients SVIE: 2} holds.
\end{proof}

Observe that, if $K_b\in L^{1,*}(0,\infty;\bR_+)\setminus L^1(0,\infty;\bR_+)$ or $K_\sigma\in L^{2,*}(0,\infty;\bR_+)\setminus L^2(0,\infty;\bR_+)$, then the number $\mu$ in the above lemma has to be strictly positive, and this is indeed a typical case. Motivated by the above lemma, we define
\begin{equation}\label{domain: b sigma}
	\rho_{b,\sigma}:=\inf\{\rho\in\bR\,|\,[K_b]_1(\rho)+[K_\sigma]_2(\rho)\leq1\}\in\bR.
\end{equation}
Note that $[K_b]_1(\rho)+[K_\sigma]_2(\rho)<1$ for any $\rho\in(\rho_{b,\sigma},\infty)$. The number $\rho_{b,\sigma}$ gives a criterion for the weight of the space in which the solution to SVIE~\eqref{SVIE} uniquely exists for any $\varphi(\cdot)$. Now we prove the well-posedness of SVIE~\eqref{SVIE}.

%% Proposition

\begin{prop}\label{prop: SVIE}
Suppose that Assumption~\ref{assum: SVIE} holds, and fix $\mu\in(\rho_{b,\sigma},\infty)$. Then for any $\varphi(\cdot)\in L^{2,-\mu}_\bF(0,\infty;\bR^n)$, SVIE~\eqref{SVIE} admits a unique solution $X(\cdot)\in L^{2,-\mu}_\bF(0,\infty;\bR^n)$. Furthermore, the following estimate holds:
\begin{equation}\label{prop: SVIE: 1}
	\bE\Bigl[\int^\infty_0e^{-2\mu t}|X(t)|^2\rd t\Bigr]^{1/2}\leq C_\mu\bE\Bigl[\int^\infty_0e^{-2\mu t}\bigl|\varphi(t)\bigr|^2\rd t\Bigr]^{1/2},
\end{equation}
where
\begin{equation*}
	C_\mu:=\frac{1}{1-[K_b]_1(\mu)-[K_\sigma]_2(\mu)}\in(0,\infty).
\end{equation*}
Let $b'$ and $\sigma'$ satisfy Assumption~\ref{assum: SVIE}, let $\varphi'(\cdot)\in L^{2,-\mu}_\bF(0,\infty;\bR^n)$ be given, and denote by $X'(\cdot)\in L^{2,-\mu}_\bF(0,\infty;\bR^n)$ the solution of SVIE~\eqref{SVIE} corresponding to $(\varphi',b',\sigma')$. Then it holds that
\begin{equation}\label{prop: SVIE: 2}
\begin{split}
	&\bE\Bigl[\int^\infty_0e^{-2\mu t}|X(t)-X'(t)|^2\rd t\Bigr]^{1/2}\\
	&\leq C_\mu\bE\Bigl[\int^\infty_0e^{-2\mu t}\Bigl|\varphi(t)-\varphi'(t)+\int^t_0\bigl\{b(t,s,X'(s))-b'(t,s,X'(s))\bigr\}\rd s\\
	&\hspace{4cm}+\int^t_0\bigl\{\sigma(t,s,X'(s))-\sigma'(t,s,X'(s))\bigr\}\rd W(s)\Bigr|^2\rd t\Bigr]^{1/2}.
\end{split}
\end{equation}
\end{prop}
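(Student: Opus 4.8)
The plan is to prove existence and uniqueness by a Banach fixed-point argument on the Hilbert space $L^{2,-\mu}_\bF(0,\infty;\bR^n)$, using Lemma~\ref{lemm: coefficients SVIE} to control the contraction constant. Define the map $\Phi$ on $L^{2,-\mu}_\bF(0,\infty;\bR^n)$ by
\begin{equation*}
	\Phi(x)(t):=\varphi(t)+\int^t_0b(t,s,x(s))\rd s+\int^t_0\sigma(t,s,x(s))\rd W(s),\ t\geq0,
\end{equation*}
for $x(\cdot)\in L^{2,-\mu}_\bF(0,\infty;\bR^n)$. A fixed point of $\Phi$ is precisely a solution to SVIE~\eqref{SVIE}, so the whole statement reduces to showing that $\Phi$ is a well-defined contraction and to reading off the two estimates.

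First I would check that $\Phi$ is a well-defined self-map. Adaptedness of $\Phi(x)(\cdot)$ follows from Assumption~\ref{assum: SVIE}(ii) together with the measurability of the It\^o integral in its upper limit. For the norm I would apply Minkowski's inequality in $L^{2,-\mu}$ to split $\Phi(x)$ into its three terms: the free term $\varphi(\cdot)$ lies in the space by hypothesis; the drift term is estimated by~\eqref{lemm: coefficients SVIE: 1}; and the diffusion term is handled by first using the It\^o isometry $\bE\bigl[|\int^t_0\sigma(t,s,x(s))\rd W(s)|^2\bigr]=\bE\bigl[\int^t_0|\sigma(t,s,x(s))|^2\rd s\bigr]$ for a.e.\ fixed $t$ and then applying~\eqref{lemm: coefficients SVIE: 2}. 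In both cases I would take $x'(\cdot)\equiv0$ and invoke Assumption~\ref{assum: SVIE}(iv), so that the right-hand sides become $[K_b]_1(\mu)\|x\|$ and $[K_\sigma]_2(\mu)\|x\|$; hence $\Phi(x)\in L^{2,-\mu}_\bF(0,\infty;\bR^n)$.

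Next, for $x(\cdot),x'(\cdot)\in L^{2,-\mu}_\bF(0,\infty;\bR^n)$ the free term cancels in $\Phi(x)-\Phi(x')$, and the same two estimates applied to the differences $b(t,s,x(s))-b(t,s,x'(s))$ and $\sigma(t,s,x(s))-\sigma(t,s,x'(s))$ give
\begin{equation*}
	\|\Phi(x)-\Phi(x')\|_{L^{2,-\mu}}\leq\bigl([K_b]_1(\mu)+[K_\sigma]_2(\mu)\bigr)\|x-x'\|_{L^{2,-\mu}}.
\end{equation*}
Since $\mu\in(\rho_{b,\sigma},\infty)$, the definition~\eqref{domain: b sigma} gives $[K_b]_1(\mu)+[K_\sigma]_2(\mu)<1$, so $\Phi$ is a contraction and has a unique fixed point $X(\cdot)$, the unique solution. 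The a priori bound~\eqref{prop: SVIE: 1} then follows from $X=\Phi(X)$: applying the drift and diffusion estimates of the self-map step with $x=X$ and $x'\equiv0$ yields $\|X-\varphi\|\leq([K_b]_1(\mu)+[K_\sigma]_2(\mu))\|X\|$, and rearranging the triangle inequality $\|X\|\leq\|\varphi\|+\|X-\varphi\|$ produces the factor $C_\mu$.

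Finally, for the stability estimate~\eqref{prop: SVIE: 2}, the key manoeuvre is to add and subtract $b(t,s,X'(s))$ and $\sigma(t,s,X'(s))$ in the equation for $X-X'$, writing
\begin{equation*}
	X(t)-X'(t)=\Psi(t)+\int^t_0\bigl\{b(t,s,X(s))-b(t,s,X'(s))\bigr\}\rd s+\int^t_0\bigl\{\sigma(t,s,X(s))-\sigma(t,s,X'(s))\bigr\}\rd W(s),
\end{equation*}
where $\Psi(\cdot)$ is exactly the process whose $L^{2,-\mu}$-norm appears on the right-hand side of~\eqref{prop: SVIE: 2}. Estimating the last two terms by~\eqref{lemm: coefficients SVIE: 1}--\eqref{lemm: coefficients SVIE: 2} (with the coefficients $b,\sigma$) gives $\|X-X'\|\leq\|\Psi\|+([K_b]_1(\mu)+[K_\sigma]_2(\mu))\|X-X'\|$, and absorbing the last term yields the factor $C_\mu$. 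The only genuinely delicate point I anticipate is the self-map step: one must justify the measurability and square-integrability of the stochastic convolution $t\mapsto\int^t_0\sigma(t,s,x(s))\rd W(s)$ as an element of $L^{2,-\mu}_\bF(0,\infty;\bR^n)$ before applying the It\^o isometry termwise. Once Lemma~\ref{lemm: coefficients SVIE} is in hand, the contraction, the a priori bound, and the stability estimate are all routine.
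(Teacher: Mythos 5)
Your proposal is correct and follows essentially the same route as the paper: the same contraction map on $L^{2,-\mu}_\bF(0,\infty;\bR^n)$ via Lemma~\ref{lemm: coefficients SVIE}, the same rearrangement for \eqref{prop: SVIE: 1}, and the same add-and-subtract decomposition for \eqref{prop: SVIE: 2} (the paper phrases this last step as viewing $X-X'$ as the solution of a new SVIE satisfying Assumption~\ref{assum: SVIE} and invoking \eqref{prop: SVIE: 1}, which is the same absorption argument you carry out directly).
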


%% Proof

\begin{proof}
Define a map $\Theta:L^{2,-\mu}_\bF(0,\infty;\bR^n)\to L^{2,-\mu}_\bF(0,\infty;\bR^n)$ by
\begin{equation*}
	\Theta[x(\cdot)](t):=\varphi(t)+\int^t_0b(t,s,x(s))\rd s+\int^t_0\sigma(t,s,x(s))\rd W(s),\ t\geq0,
\end{equation*}
for each $x(\cdot)\in L^{2,-\mu}_\bF(0,\infty;\bR^n)$. By Lemma~\ref{lemm: coefficients SVIE}, the map $\Theta$ is well-defined. Let $x(\cdot),x'(\cdot)\in L^{2,-\mu}_\bF(0,\infty;\bR^n)$ be fixed. By Lemma~\ref{lemm: coefficients SVIE},
\begin{align*}
	&\bE\Bigl[\int^\infty_0e^{-2\mu t}\bigl|\Theta[x(\cdot)](t)-\Theta[x'(\cdot)](t)\bigr|^2\rd t\Bigr]^{1/2}\\
	&\leq\bE\Bigl[\int^\infty_0e^{-2\mu t}\Bigl(\int^t_0\bigl|b(t,s,x(s))-b(t,s,x'(s))\bigr|\rd s\Bigr)^2\rd t\Bigr]^{1/2}\\
	&\hspace{1cm}+\bE\Bigl[\int^\infty_0e^{-2\mu t}\int^t_0\bigl|\sigma(t,s,x(s))-\sigma(t,s,x'(s))\bigr|^2\rd s\rd t\Bigr]^{1/2}\\
	&\leq\bigl\{[K_b]_1(\mu)+[K_\sigma]_2(\mu)\bigr\}\bE\Bigl[\int^\infty_0e^{-2\mu t}|x(t)-x'(t)|^2\rd t\Bigr]^{1/2}.
\end{align*}
Since $\mu\in(\rho_{b,\sigma},\infty)$ with $\rho_{b,\sigma}\in\bR$ defined by \eqref{domain: b sigma}, it holds that $[K_b]_1(\mu)+[K_\sigma]_2(\mu)<1$. This implies that $\Theta$ is a contraction map on the Banach space $L^{2,-\mu}_\bF(0,\infty;\bR^n)$. Thus, there exists a unique fixed point $X(\cdot)\in L^{2,-\mu}_\bF(0,\infty;\bR^n)$ of $\Theta$, which is the solution of SVIE~\eqref{SVIE}. Furthermore, by the same calculation yields that
\begin{equation*}
	\bE\Bigl[\int^\infty_0e^{-2\mu t}|X(t)|^2\rd t\Bigr]^{1/2}\leq\bE\Bigl[\int^\infty_0e^{-2\mu t}\bigl|\varphi(t)\bigr|^2\rd t\Bigr]^{1/2}+\bigl\{[K_b]_1(\mu)+[K_\sigma]_2(\mu)\bigr\}\bE\Bigl[\int^\infty_0e^{-2\mu t}|X(t)|^2\rd t\Bigr]^{1/2},
\end{equation*}
which implies the estimate~\eqref{prop: SVIE: 1}.

We prove the estimate \eqref{prop: SVIE: 2}. Observe that $\overline{X}(\cdot):=X(\cdot)-X'(\cdot)\in L^{2,-\mu}_\bF(0,\infty;\bR^n)$ is the solution of the SVIE
\begin{equation*}
	\overline{X}(t)=\overline{\varphi}(t)+\int^t_0\overline{b}(t,s,\overline{X}(s))\rd s+\int^t_0\overline{\sigma}(t,s,\overline{X}(s))\rd W(s),\ t\geq0,
\end{equation*}
where
\begin{align*}
	&\overline{\varphi}(t):=\varphi(t)-\varphi'(t)+\int^t_0\bigl\{b(t,s,X'(s))-b'(t,s,X'(s))\bigr\}\rd s+\int^t_0\bigl\{\sigma(t,s,X'(s))-\sigma'(t,s,X'(s))\bigr\}\rd W(s),\\
	&\overline{b}(t,s,x):=b(t,s,x+X'(s))-b(t,s,X'(s)),\ \overline{\sigma}(t,s,x):=\sigma(t,s,x+X'(s))-\sigma(t,s,X'(s)).
\end{align*}
Note that $\overline{\varphi}(\cdot)\in L^{2,-\mu}_\bF(0,\infty;\bR^n)$, and $\overline{b}$ and $\overline{\sigma}$ satisfy Assumption~\ref{assum: SVIE} with the functions $K_b$ and $K_\sigma$. Therefore, by the estimate~\eqref{prop: SVIE: 1}, we have
\begin{equation*}
	\bE\Bigl[\int^\infty_0e^{-2\mu t}|\overline{X}(t)|^2\rd t\Bigr]^{1/2}\leq C_\mu\bE\Bigl[\int^\infty_0e^{-2\mu t}|\overline{\varphi}(t)|^2\rd t\Bigr]^{1/2}.
\end{equation*}
Thus the estimate~\eqref{prop: SVIE: 2} holds, and we finish the proof.
\end{proof}

%% Remark

\begin{rem}\label{rem: SVIE nonzero}
The above results can be easily generalized to the case where $b(t,s,0)$ and $\sigma(t,s,0)$ are nonzero. Indeed, suppose that $b$ and $\sigma$ satisfy Assumption~\ref{assum: SVIE} (i), (ii), (iii), and
\begin{equation*}
	\bE\Bigl[\int^\infty_0e^{-2\mu t}\Bigl(\int^t_0|b(t,s,0)|\rd s\Bigr)^2\rd t+\int^\infty_0e^{-2\mu t}\int^t_0|\sigma(t,s,0)|^2\rd s\rd t\Bigr]<\infty
\end{equation*}
with $\mu\in(\rho_{b,\sigma},\infty)$. For any given $\varphi(\cdot)\in L^{2,-\mu}_\bF(0,\infty;\bR^n)$, define
\begin{align*}
	&\tilde{\varphi}(t):=\varphi(t)+\int^t_0b(t,s,0)\rd s+\int^t_0\sigma(t,s,0)\rd W(s),\\
	&\tilde{b}(t,s,x):=b(t,s,x)-b(t,s,0),\ \tilde{\sigma}(t,s,x):=\sigma(t,s,x)-\sigma(t,s,0),
\end{align*}
for $(t,s,x)\in\Delta^\comp[0,\infty)\times\bR^n$. Then $\tilde{\varphi}(\cdot)\in L^{2,-\mu}_\bF(0,\infty;\bR^n)$, and $\tilde{b}$ and $\tilde{\sigma}$ satisfy all the conditions in Assumption~\ref{assum: SVIE} with the functions $K_b$ and $K_\sigma$. Thus, there exists a unique solution $X(\cdot)\in L^{2,-\mu}_\bF(0,\infty;\bR^n)$ to the SVIE
\begin{equation*}
	X(t)=\tilde{\varphi}(t)+\int^t_0\tilde{b}(t,s,X(s))\rd s+\int^t_0\tilde{\sigma}(t,s,X(s))\rd W(s),\ t\geq0.
\end{equation*}
Clearly, $X(\cdot)$ is the unique solution of the original SVIE~\eqref{SVIE}.
\end{rem}

%%%%%%%%%%%%%%

\section{Infinite horizon BSVIEs}\label{section: BSVIE}

%%%%%%%%%%%%%%

In this section, we investigate infinite horizon BSVIE~\eqref{Type-II}, which we rewrite for readers' convenience:
\begin{equation}\label{BSVIE}
	Y(t)=\psi(t)+\int^\infty_te^{-\lambda(s-t)}g(t,s,Y(s),Z(t,s),Z(s,t))\rd s-\int^\infty_tZ(t,s)\rd W(s),\ t\geq0,
\end{equation}
where the free term $\psi(\cdot)\in L^{2,*}_{\cF_\infty}(0,\infty;\bR^m)$ with $m\in\bN$, the driver $g$, and the discount rate $\lambda\in\bR$ are given. We impose the following assumptions on the driver $g$:

%% Assumption

\begin{assum}\label{assum: BSVIE}
\begin{itemize}
\item[(i)]
$g:\Omega\times\Delta[0,\infty)\times\bR^m\times\bR^{m\times d}\times\bR^{m\times d}\to\bR^m$ is measurable.
\item[(ii)]
$(g(t,s,y,z_1,z_2))_{s\in[t,\infty)}$ is adapted for each $t\in[0,\infty)$ and $(y,z_1,z_2)\in\bR^m\times\bR^{m\times d}\times\bR^{m\times d}$.
\item[(iii)]
There exist $K_{g,y}\in L^{1,*}(0,\infty;\bR_+)$ and $K_{g,z_1},K_{g,z_2}\in L^{2,*}(0,\infty;\bR_+)$ such that
\begin{equation*}
	|g(t,s,y,z_1,z_2)-g(t,s,y',z'_1,z'_2)|\leq K_{g,y}(s-t)|y-y'|+K_{g,z_1}(s-t)|z_1-z'_1|+K_{g,z_2}(s-t)|z_2-z'_2|
\end{equation*}
for any $(y,z_1,z_2),(y',z'_1,z'_2)\in\bR^m\times\bR^{m\times d}\times\bR^{m\times d}$, for a.e.\ $(t,s)\in\Delta[0,\infty)$, a.s.
\item[(iv)]
$g(t,s,0,0,0)=0$ for a.e.\ $(t,s)\in\Delta[0,\infty)$, a.s.
\end{itemize}
\end{assum}

Note that, as in the SVIEs case, the Lipschitz coefficients are allowed to be singular at the diagonal $t=s$ and the infinity. We give a definition of the solution.

%% Definition

\begin{defi}
We call a pair $(Y(\cdot),Z(\cdot,\cdot))$ the \emph{adapted M-solution} of infinite horizon BSVIE~\eqref{BSVIE} if $(Y(\cdot),Z(\cdot,\cdot))$ is in $\cM^{2,*}_\bF(0,\infty;\bR^m\times\bR^{m\times d})$ and satisfies the equality for a.e.\ $t\geq0$, a.s.
\end{defi}

The above definition is a natural extension of that of \cite{Yo08} to the infinite horizon setting. The ``M''-solution is named after the martingale representation theorem $Y(t)=\bE[Y(t)]+\int^t_0Z(t,s)\rd W(s)$ which determines the values of $Z(t,s)$ for $(t,s)\in\Delta^\comp[0,\infty)$. Recall that this relation is a part of the definition of the space $\cM^{2,*}_\bF(0,\infty;\bR^m\times\bR^{m\times d})$.

%%%%

\subsection{Well-posedness of infinite horizon BSVIEs}

%%%%

In this subsection, we prove the well-posedness of infinite horizon BSVIE~\eqref{BSVIE} under Assumption~\ref{assum: BSVIE}. A delicate problem is the condition which ensures the existence and uniqueness of the adapted M-solution for \emph{any} free term $\psi(\cdot)$ in a \emph{given} weighted $L^2$-space. As we mentioned in the introductory section, in order to ensure the well-posedness in a large space, the driver $g$ has to be discounted sufficiently at infinity. The following lemma provides a useful criterion for this trade-off relationship.

%% Lemma

\begin{lemm}\label{lemm: coefficients BSVIE}
Suppose that Assumption~\ref{assum: BSVIE} holds. Fix $\eta,\lambda\in\bR$ satisfying $[K_{g,y}]_1(\eta+\lambda)+[K_{g,z_1}]_2(\lambda)+[K_{g,z_2}]_2(\eta+\lambda)<\infty$. Then for any $(y(\cdot),z(\cdot,\cdot)),(y'(\cdot),z'(\cdot,\cdot))\in \cM^{2,\eta}_\bF(0,\infty;\bR^m\times\bR^{m\times d})$, it holds that
\begin{align*}
	&\bE\Bigl[\int^\infty_0e^{2\eta t}\Bigl(\int^\infty_te^{-\lambda(s-t)}|g(t,s,y(s),z(t,s),z(s,t))-g(t,s,y'(s),z'(t,s),z'(s,t))|\rd s\Bigr)^2\rd t\Bigr]^{1/2}\\
	&\leq \bigl\{[K_{g,y}]_1(\eta+\lambda)+[K_{g,z_2}]_2(\eta+\lambda)\bigr\}\bE\Bigl[\int^\infty_0e^{2\eta t}|y(t)-y'(t)|^2\rd t\Bigr]^{1/2}\\
	&\hspace{1cm}+[K_{g,z_1}]_2(\lambda)\bE\Bigl[\int^\infty_0e^{2\eta t}\int^\infty_t|z(t,s)-z'(t,s)|^2\rd s\rd t\Bigr]^{1/2}.
\end{align*}
\end{lemm}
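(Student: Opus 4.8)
The plan is to adapt the proof of Lemma~\ref{lemm: coefficients SVIE}, splitting the driver increment into its three Lipschitz contributions and treating the lower-triangular term $z(s,t)$ separately via the adapted M-solution structure. First I would apply Assumption~\ref{assum: BSVIE}~(iii) with the arguments $(y(s),z(t,s),z(s,t))$ and $(y'(s),z'(t,s),z'(s,t))$ to bound the integrand pointwise by
\[
K_{g,y}(s-t)|y(s)-y'(s)|+K_{g,z_1}(s-t)|z(t,s)-z'(t,s)|+K_{g,z_2}(s-t)|z(s,t)-z'(s,t)|.
\]
By the triangle inequality in the weighted Hilbert space $L^2(\Omega\times(0,\infty);\bP\otimes e^{2\eta t}\rd t)$, the left-hand side is then dominated by the sum of three terms $I_y$, $I_{z_1}$, $I_{z_2}$, one for each summand, and it remains to estimate each of them separately.

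For $I_y$ I would use the algebraic identity $e^{\eta t}e^{-\lambda(s-t)}=e^{-(\eta+\lambda)(s-t)}e^{\eta s}$ to rewrite the inner integral as $\int^\infty_t e^{-(\eta+\lambda)(s-t)}K_{g,y}(s-t)\,e^{\eta s}|y(s)-y'(s)|\rd s$, and then run the Cauchy--Schwarz argument of Lemma~\ref{lemm: coefficients SVIE}: split the kernel $e^{-(\eta+\lambda)(s-t)}K_{g,y}(s-t)$ into two equal square-root factors, bound $\int^\infty_t e^{-(\eta+\lambda)(s-t)}K_{g,y}(s-t)\rd s\leq[K_{g,y}]_1(\eta+\lambda)$, and conclude by Tonelli's theorem. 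This gives $I_y\leq[K_{g,y}]_1(\eta+\lambda)\,\bE\bigl[\int^\infty_0 e^{2\eta t}|y(t)-y'(t)|^2\rd t\bigr]^{1/2}$. For $I_{z_1}$, the integrand carries $z(t,s)$ whose first argument already matches the outer variable $t$, so I would apply Cauchy--Schwarz directly against the kernel $e^{-\lambda(s-t)}K_{g,z_1}(s-t)$, whose squared $L^2$-mass over $s\in[t,\infty)$ equals $[K_{g,z_1}]_2(\lambda)^2$; keeping the weight $e^{2\eta t}$ outside then yields $I_{z_1}\leq[K_{g,z_1}]_2(\lambda)\,\bE\bigl[\int^\infty_0 e^{2\eta t}\int^\infty_t|z(t,s)-z'(t,s)|^2\rd s\rd t\bigr]^{1/2}$.

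The term $I_{z_2}$ is the crux, and it is where the M-solution hypothesis enters. Using the same reweighting and Cauchy--Schwarz against $e^{-(\eta+\lambda)(s-t)}K_{g,z_2}(s-t)^2$ (whose integral over $s\in[t,\infty)$ is $[K_{g,z_2}]_2(\eta+\lambda)^2$), I would reduce $I_{z_2}^2$ to $[K_{g,z_2}]_2(\eta+\lambda)^2\,\bE\int^\infty_0\int^\infty_t e^{2\eta s}|z(s,t)-z'(s,t)|^2\rd s\rd t$. Swapping the order of integration by Tonelli turns the inner integral into $\int^s_0|z(s,t)-z'(s,t)|^2\rd t$, i.e.\ the squared martingale integrand associated with $y(s)-y'(s)$ on $[0,s]$. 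Since $(y-y',z-z')\in\cM^{2,\eta}_\bF(0,\infty;\bR^m\times\bR^{m\times d})$, the defining relation $y(s)-y'(s)=\bE[y(s)-y'(s)]+\int^s_0(z(s,t)-z'(s,t))\rd W(t)$ together with It\^o's isometry give $\bE\int^s_0|z(s,t)-z'(s,t)|^2\rd t=\bE|y(s)-y'(s)|^2-|\bE[y(s)-y'(s)]|^2\leq\bE|y(s)-y'(s)|^2$. Inserting this bound produces $I_{z_2}\leq[K_{g,z_2}]_2(\eta+\lambda)\,\bE\bigl[\int^\infty_0 e^{2\eta t}|y(t)-y'(t)|^2\rd t\bigr]^{1/2}$. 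Collecting $I_y+I_{z_1}+I_{z_2}$ and grouping the two terms that carry the $y$-norm reproduces exactly the asserted inequality. I expect the only delicate point to be the treatment of $I_{z_2}$: one has to notice that the lower-triangular values $z(s,t)$ with $t\leq s$ are not controlled by the $\sL^{2,\eta}_\bF$-norm on $\Delta^\comp[0,\infty)$ in isolation, and can be estimated only through the martingale representation built into the space $\cM^{2,\eta}_\bF$.
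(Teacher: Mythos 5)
Your proof is correct and follows essentially the same route as the paper's: the same three-term decomposition $I_y+I_{z_1}+I_{z_2}$, the same reweighting $e^{\eta t}e^{-\lambda(s-t)}=e^{-(\eta+\lambda)(s-t)}e^{\eta s}$, Cauchy--Schwarz for the $z_1$ and $z_2$ terms, Fubini--Tonelli to pass to the lower-triangular region, and, crucially, the martingale-representation identity built into $\cM^{2,\eta}_\bF$ (with It\^o's isometry) to bound $\bE\int^s_0|z(s,t)-z'(s,t)|^2\,\mathrm{d}t$ by $\bE|y(s)-y'(s)|^2$. The only cosmetic differences are that for $I_y$ you unfold the square-root-splitting proof of Young's convolution inequality where the paper simply invokes it, and that the kernel in your $I_{z_2}$ Cauchy--Schwarz step should read $e^{-2(\eta+\lambda)(s-t)}K_{g,z_2}(s-t)^2$, which is clearly what you intend given the stated conclusion of that step.
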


%% Proof

\begin{proof}
We observe that
\begin{align*}
	&\bE\Bigl[\int^\infty_0e^{2\eta t}\Bigl(\int^\infty_te^{-\lambda(s-t)}|g(t,s,y(s),z(t,s),z(s,t))-g(t,s,y'(s),z'(t,s),z'(s,t))|\rd s\Bigr)^2\rd t\Bigr]^{1/2}\\
	&\leq I_y+I_{z_1}+I_{z_2},
\end{align*}
where
\begin{align*}
	&I_y:=\bE\Bigl[\int^\infty_0e^{2\eta t}\Bigl(\int^\infty_te^{-\lambda(s-t)}K_{g,y}(s-t)|y(s)-y'(s)|\rd s\Bigr)^2\rd t\Bigr]^{1/2},\\
	&I_{z_1}:=\bE\Bigl[\int^\infty_0e^{2\eta t}\Bigl(\int^\infty_te^{-\lambda(s-t)}K_{g,z_1}(s-t)|z(t,s)-z'(t,s)|\rd s\Bigr)^2\rd t\Bigr]^{1/2},\\
	&I_{z_2}:=\bE\Bigl[\int^\infty_0e^{2\eta t}\Bigl(\int^\infty_te^{-\lambda(s-t)}K_{g,z_2}(s-t)|z(s,t)-z'(s,t)|\rd s\Bigr)^2\rd t\Bigr]^{1/2}.
\end{align*}
By using Young's convolution inequality, we get
\begin{align*}
	I_y&=\bE\Bigl[\int^\infty_0\Bigl(\int^\infty_te^{-(\eta+\lambda)(s-t)}K_{g,y}(s-t)e^{\eta s}|y(s)-y'(s)|\rd s\Bigr)^2\rd t\Bigr]^{1/2}\\
	&\leq\bE\Bigl[\Bigl(\int^\infty_0e^{-(\eta+\lambda)t}K_{g,y}(t)\rd t\Bigr)^2\int^\infty_0e^{2\eta t}|y(t)-y'(t)|^2\rd t\Bigr]^{1/2}\\
	&=[K_{g,y}]_1(\eta+\lambda)\bE\Bigl[\int^\infty_0e^{2\eta t}|y(t)-y'(t)|^2\rd t\Bigr]^{1/2}.
\end{align*}
Next, by H\"{o}lder's inequality,
\begin{align*}
	I_{z_1}&\leq\bE\Bigl[\int^\infty_0e^{2\eta t}\int^\infty_te^{-2\lambda(s-t)}K_{g,z_1}(s-t)^2\rd s\int^\infty_t|z(t,s)-z'(t,s)|^2\rd s\rd t\Bigr]^{1/2}\\
	&=\Bigl(\int^\infty_0e^{-2\lambda t}K_{g,z_1}(t)^2\rd t\Bigr)^{1/2}\bE\Bigl[\int^\infty_0e^{2\eta t}\int^\infty_t|z(t,s)-z'(t,s)|^2\rd s\rd t\Bigr]^{1/2}\\
	&=[K_{g,z_1}]_2(\lambda)\bE\Bigl[\int^\infty_0e^{2\eta t}\int^\infty_t|z(t,s)-z'(t,s)|^2\rd s\rd t\Bigr]^{1/2}.
\end{align*}
Lastly, by using H\"{o}lder's inequality and Fubini's theorem, we have
\begin{align*}
	I_{z_2}&=\bE\Bigl[\int^\infty_0\Bigl(\int^\infty_te^{-(\eta+\lambda)(s-t)}K_{g,z_2}(s-t)e^{\eta s}|z(s,t)-z'(s,t)|\rd s\Bigr)^2\rd t\Bigr]^{1/2}\\
	&\leq\bE\Bigl[\int^\infty_0\int^\infty_te^{-2(\eta+\lambda)(s-t)}K_{g,z_2}(s-t)^2\rd s\int^\infty_te^{2\eta s}|z(s,t)-z'(s,t)|^2\rd s\rd t\Bigr]^{1/2}\\
	&=\Bigl(\int^\infty_0e^{-2(\eta+\lambda)t}K_{g,z_2}(t)^2\rd t\Bigr)^{1/2}\bE\Bigl[\int^\infty_0\int^\infty_te^{2\eta s}|z(s,t)-z'(s,t)|^2\rd s\rd t\Bigr]^{1/2}\\
	&=[K_{g,z_2}]_2(\eta+\lambda)\Bigl(\int^\infty_0e^{2\eta t}\bE\Bigl[\int^t_0|z(t,s)-z'(t,s)|^2\rd s\Bigr]\rd t\Bigr)^{1/2}.
\end{align*}
We note that, since $(y(\cdot),z(\cdot,\cdot)),(y'(\cdot),z'(\cdot,\cdot))\in \cM^{2,\eta}_\bF(0,\infty;\bR^m\times\bR^{m\times d})$, it holds that
\begin{align*}
	\bE\Bigl[\int^t_0|z(t,s)-z'(t,s)|^2\rd s\Bigr]&=\bE\Bigl[\Bigl|\int^t_0\bigl\{z(t,s)-z'(t,s)\bigr\}\rd W(s)\Bigr|^2\Bigr]\\
	&=\bE\bigl[\bigl|y(t)-y'(t)-\bE[y(t)-y'(t)]\bigr|^2\bigr]\\
	&\leq\bE\bigl[|y(t)-y'(t)|^2\bigr]
\end{align*}
for a.e.\ $t\geq0$. Thus, we get
\begin{equation*}
	I_{z_2}\leq[K_{g,z_2}]_2(\eta+\lambda)\bE\Bigl[\int^\infty_0e^{2\eta t}|y(t)-y'(t)|^2\rd t\Bigr]^{1/2}.
\end{equation*}
From the above observations, we get the assertion of the lemma.
\end{proof}

With the above lemma in mind, we introduce the following domain:
\begin{equation}\label{domain: g}
	\cR_g:=\{(\eta,\lambda)\in\bR^2\,|\,[K_{g,y}]_1(\eta+\lambda)+[K_{g,z_1}]_2(\lambda)+[K_{g,z_2}]_2(\eta+\lambda)<1\},
\end{equation}
which gives a criterion for the trade-off relationship between the weight of the space of the solution and the discount rate. Note that, since $K_{g,y}\in L^{1,*}(0,\infty;\bR_+)$ and $K_{g,z_1},K_{g,z_2}\in L^{2,*}(0,\infty;\bR_+)$, the set $\cR_g$ is nonempty.

%% Remark

\begin{rem}
\begin{itemize}
\item[(i)]
If $(\eta,\lambda)\in\cR_g$, then for any $(\eta',\lambda')\in\bR^2$ such that $\eta\leq \eta'$ and $\lambda\leq \lambda'$, we have $(\eta',\lambda')\in\cR_g$.
\item[(ii)]
If the driver $g(t,s,y,z_1,z_2)$ does not depend on $z_1$, then we can take $K_{g,z_1}=0$. In this case, we have $\cR_g=\{(\eta,\lambda)\in\bR^2\,|\,\eta+\lambda>\rho_{g;y,z_2}\}$ with $\rho_{g;y,z_2}:=\inf\{\rho\in\bR\,|\,[K_{g,y}]_1(\rho)+[K_{g,z_2}]_2(\rho)\leq1\}$.
\item[(iii)]
If the driver $g(t,s,y,z_1,z_2)$ does not depend on $(y,z_2)$, then we can take $K_{g,y}=K_{g,z_2}=0$. In this case, we have $\cR_g=\{(\eta,\lambda)\in\bR^2\,|\,\lambda>\rho_{g;z_1}\}$ with $\rho_{g;z_1}:=\inf\{\rho\in\bR\,|\,[K_{g,z_1}]_2(\rho)\leq1\}$.
\end{itemize}
\end{rem}

We prove the well-posedness of infinite horizon BSVIE~\eqref{BSVIE} by using the so-called \emph{method of continuation}. For this purpose, we first provide a priori estimates for infinite horizon BSVIEs, which is important on its own right.

%% Proposition

\begin{prop}\label{prop: a priori estimate BSVIE}
Suppose that Assumption~\ref{assum: BSVIE} holds, and fix $(\eta,\lambda)\in\cR_g$. Let $\psi(\cdot)\in L^{2,\eta}_{\cF_\infty}(0,\infty;\bR^m)$ be given, and let $(Y(\cdot),Z(\cdot,\cdot))\in\cM^{2,\eta}_\bF(0,\infty;\bR^m\times\bR^{m\times d})$ be an adapted M-solution of infinite horizon BSVIE~\eqref{BSVIE}. Then it holds that
\begin{equation}\label{prop: a priori estimate BSVIE: 1}
	\bE\Bigl[\int^\infty_0e^{2\eta t}|Y(t)|^2\rd t+\int^\infty_0e^{2\eta t}\int^\infty_0|Z(t,s)|^2\rd s\rd t\Bigr]^{1/2}\leq C_{\eta,\lambda}\bE\Bigl[\int^\infty_0e^{2\eta t}\bigl|\psi(t)\bigr|^2\rd t\Bigr]^{1/2},
\end{equation}
where
\begin{equation*}
	C_{\eta,\lambda}:=\frac{\sqrt{2}}{1-[K_{g,y}]_1(\eta+\lambda)-[K_{g,z_1}]_2(\lambda)-[K_{g,z_2}]_2(\eta+\lambda)}\in(0,\infty).
\end{equation*}
Suppose that $g'$ satisfies Assumption~\ref{assum: BSVIE}, let $\psi'(\cdot)\in L^{2,\eta}_{\cF_\infty}(0,\infty;\bR^m)$, and let $(Y'(\cdot),Z'(\cdot,\cdot))\in\cM^{2,\eta}_\bF(0,\infty;\bR^m\times\bR^{m\times d})$ be an adapted M-solution of \eqref{BSVIE} corresponding to $(\psi',g',\lambda)$. Then it holds that
\begin{equation}\label{prop: a priori estimate BSVIE: 2}
\begin{split}
	&\bE\Bigl[\int^\infty_0e^{2\eta t}|Y(t)-Y'(t)|^2\rd t+\int^\infty_0e^{2\eta t}\int^\infty_0|Z(t,s)-Z'(t,s)|^2\rd s\rd t\Bigr]^{1/2}\\
	&\leq C_{\eta,\lambda}\bE\Bigl[\int^\infty_0e^{2\eta t}\Bigl|\psi(t)-\psi'(t)+\int^\infty_te^{-\lambda(s-t)}\bigl\{g(t,s,Y'(s),Z'(t,s),Z'(s,t))\\
	&\hspace{7cm}-g'(t,s,Y'(s),Z'(t,s),Z'(s,t))\bigr\}\rd s\Bigr|^2\rd t\Bigr]^{1/2}.
\end{split}
\end{equation}
\end{prop}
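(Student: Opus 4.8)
The plan is to pass through the single combined process that makes the M-solution structure transparent. First I would set
\[
	G(t):=\psi(t)+\int^\infty_te^{-\lambda(s-t)}g(t,s,Y(s),Z(t,s),Z(s,t))\rd s,\quad t\geq0,
\]
so that infinite horizon BSVIE~\eqref{BSVIE} becomes simply $G(t)=Y(t)+\int^\infty_tZ(t,s)\rd W(s)$. Since $\psi(\cdot)\in L^{2,\eta}_{\cF_\infty}(0,\infty;\bR^m)$ and, by Lemma~\ref{lemm: coefficients BSVIE} applied with $(y',z')=(0,0)$ and Assumption~\ref{assum: BSVIE}~(iv), the discounted driver term lies in $L^{2,\eta}_{\cF_\infty}(0,\infty;\bR^m)$, the process $G(\cdot)$ belongs to $L^{2,\eta}_{\cF_\infty}(0,\infty;\bR^m)$ and every step below is justified. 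Throughout, write $\|Y\|$, $\|G\|$ for the norms $\bE[\int^\infty_0e^{2\eta t}|\cdot(t)|^2\rd t]^{1/2}$, and $\|Z\|_\Delta$, $\|Z\|$ for $\bE[\int^\infty_0e^{2\eta t}\int^\infty_t|Z(t,s)|^2\rd s\rd t]^{1/2}$ and $\bE[\int^\infty_0e^{2\eta t}\int^\infty_0|Z(t,s)|^2\rd s\rd t]^{1/2}$.

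The core of the argument is to read off two orthogonality relations from the M-solution structure. Because $Y(t)$ is $\cF_t$-measurable and $\int^\infty_tZ(t,s)\rd W(s)$ has zero conditional mean given $\cF_t$, Pythagoras' theorem applied to $G(t)=Y(t)+\int^\infty_tZ(t,s)\rd W(s)$ gives, for a.e.\ $t\geq0$,
\[
	\bE\bigl[|G(t)|^2\bigr]=\bE\bigl[|Y(t)|^2\bigr]+\bE\Bigl[\int^\infty_t|Z(t,s)|^2\rd s\Bigr].
\]
Moreover the defining identity $Y(t)=\bE[Y(t)]+\int^t_0Z(t,s)\rd W(s)$ of $\cM^{2,\eta}_\bF$ together with the It\^o isometry yields $\bE[\int^t_0|Z(t,s)|^2\rd s]=\bE[|Y(t)-\bE[Y(t)]|^2]\leq\bE[|Y(t)|^2]$, so the lower-triangular part of $Z$ is controlled by $Y$. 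Multiplying both displays by $e^{2\eta t}$ and integrating over $t\geq0$, I obtain
\[
	\|Y\|^2+\|Z\|_\Delta^2=\|G\|^2\quad\text{and}\quad\|Y\|^2+\|Z\|^2\leq2\|G\|^2.
\]

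Next I would close a self-improving estimate on $\|G\|$. The triangle inequality gives $\|G\|\leq\|\psi\|+\mathcal{I}$, where $\mathcal{I}$ is the weighted norm of the discounted driver term, and Lemma~\ref{lemm: coefficients BSVIE} (again with $(y',z')=(0,0)$) bounds
\[
	\mathcal{I}\leq\bigl\{[K_{g,y}]_1(\eta+\lambda)+[K_{g,z_2}]_2(\eta+\lambda)\bigr\}\|Y\|+[K_{g,z_1}]_2(\lambda)\|Z\|_\Delta.
\]
The first identity of the previous paragraph forces $\|Y\|\leq\|G\|$ and $\|Z\|_\Delta\leq\|G\|$, so writing $\kappa:=[K_{g,y}]_1(\eta+\lambda)+[K_{g,z_1}]_2(\lambda)+[K_{g,z_2}]_2(\eta+\lambda)<1$ (this is exactly $(\eta,\lambda)\in\cR_g$), I get $\mathcal{I}\leq\kappa\|G\|$, hence $\|G\|\leq\|\psi\|+\kappa\|G\|$ and $\|G\|\leq(1-\kappa)^{-1}\|\psi\|$. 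Combining with $\|Y\|^2+\|Z\|^2\leq2\|G\|^2$ yields \eqref{prop: a priori estimate BSVIE: 1} with $C_{\eta,\lambda}=\sqrt2/(1-\kappa)$. The hard part is precisely to reach the sharp threshold $\kappa<1$ (rather than a worse one such as $\kappa<1/\sqrt2$): one must resist spending the factor $\sqrt2$ early, closing the inequality on $\|G\|$ using only the upper-triangular norm $\|Z\|_\Delta=(\|G\|^2-\|Y\|^2)^{1/2}$ that appears in the driver estimate, and paying $\sqrt2$ only at the very end for the lower-triangular part of $Z$ pinned down by the martingale representation.

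Finally, for the stability estimate~\eqref{prop: a priori estimate BSVIE: 2} I would mimic the proof of Proposition~\ref{prop: SVIE}: set $\overline Y:=Y-Y'$ and $\overline Z:=Z-Z'$, which lie in $\cM^{2,\eta}_\bF(0,\infty;\bR^m\times\bR^{m\times d})$ by linearity, and check that $(\overline Y,\overline Z)$ is the adapted M-solution of infinite horizon BSVIE~\eqref{BSVIE} with discount rate $\lambda$, free term
\[
	\overline\psi(t):=\psi(t)-\psi'(t)+\int^\infty_te^{-\lambda(s-t)}\bigl\{g(t,s,Y'(s),Z'(t,s),Z'(s,t))-g'(t,s,Y'(s),Z'(t,s),Z'(s,t))\bigr\}\rd s,
\]
and driver $\overline g(t,s,y,z_1,z_2):=g(t,s,y+Y'(s),z_1+Z'(t,s),z_2+Z'(s,t))-g(t,s,Y'(s),Z'(t,s),Z'(s,t))$, which satisfies Assumption~\ref{assum: BSVIE} with the same kernels $K_{g,y},K_{g,z_1},K_{g,z_2}$. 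Applying the already-established estimate~\eqref{prop: a priori estimate BSVIE: 1} to $(\overline Y,\overline Z)$ with free term $\overline\psi$ then gives exactly~\eqref{prop: a priori estimate BSVIE: 2}.
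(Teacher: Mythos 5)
Your proposal is correct and follows essentially the same route as the paper: your Pythagoras identity for $G(t)=Y(t)+\int^\infty_tZ(t,s)\rd W(s)$ is exactly the paper's starting identity $\bE[|Y(t)|^2+\int^\infty_t|Z(t,s)|^2\rd s]=\bE[|\psi(t)+\int^\infty_te^{-\lambda(s-t)}g\rd s|^2]$, your closing of the estimate on $\|G\|$ (which equals the combined norm of $Y$ and the upper-triangular part of $Z$) before paying the factor $\sqrt2$ for the lower-triangular part matches the paper's order of operations, and your reduction of the stability estimate \eqref{prop: a priori estimate BSVIE: 2} to \eqref{prop: a priori estimate BSVIE: 1} via $(\overline Y,\overline Z,\overline\psi,\overline g)$ is the paper's argument verbatim.
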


%% Proof

\begin{proof}
By equation~\eqref{BSVIE}, it holds that
\begin{align*}
	\bE\Bigl[|Y(t)|^2+\int^\infty_t|Z(t,s)|^2\rd s\Bigr]&=\bE\Bigl[\Bigl|Y(t)+\int^\infty_tZ(t,s)\rd W(s)\Bigr|^2\Bigr]\\
	&=\bE\Bigl[\Bigl|\psi(t)+\int^\infty_te^{-\lambda(s-t)}g(t,s,Y(s),Z(t,s),Z(s,t))\rd s\Bigr|^2\Bigr]
\end{align*}
for a.e.\ $t\geq0$. Thus, by using Minkowski's inequality, we get
\begin{equation}\label{proof: a priori estimate BSVIE: 1}
\begin{split}
	&\bE\Bigl[\int^\infty_0e^{2\eta t}|Y(t)|^2\rd t+\int^\infty_0e^{2\eta t}\int^\infty_t|Z(t,s)|^2\rd s\rd t\Bigr]^{1/2}\\
	&\leq\bE\Bigl[\int^\infty_0e^{2\eta t}\bigl|\psi(t)\bigr|^2\rd t\Bigr]^{1/2}+\bE\Bigl[\int^\infty_0e^{2\eta t}\Bigl(\int^\infty_te^{-\lambda(s-t)}|g(t,s,Y(s),Z(t,s),Z(s,t))|\rd s\Bigr)^2\rd t\Bigr]^{1/2}.
\end{split}
\end{equation}
By Lemma~\ref{lemm: coefficients BSVIE}, the second term in the right-hand side is estimated as
\begin{equation}\label{proof: a priori estimate BSVIE: 2}
\begin{split}
	&\bE\Bigl[\int^\infty_0e^{2\eta t}\Bigl(\int^\infty_te^{-\lambda(s-t)}|g(t,s,Y(s),Z(t,s),Z(s,t))|\rd s\Bigr)^2\rd t\Bigr]^{1/2}\\
	&\leq\bigl\{[K_{g,y}]_1(\eta+\lambda)+[K_{g,z_2}]_2(\eta+\lambda)\bigr\}\bE\Bigl[\int^\infty_0e^{2\eta t}|Y(t)|^2\rd t\Bigr]^{1/2}\\
	&\hspace{1cm}+[K_{g,z_1}]_2(\lambda)\bE\Bigl[\int^\infty_0e^{2\eta t}\int^\infty_t|Z(t,s)|^2\rd s\rd t\Bigr]^{1/2}\\
	&\leq\bigl\{[K_{g,y}]_1(\eta+\lambda)+[K_{g,z_1}]_2(\lambda)+[K_{g,z_2}]_2(\eta+\lambda)\bigr\}\bE\Bigl[\int^\infty_0e^{2\eta t}|Y(t)|^2\rd t+\int^\infty_0e^{2\eta t}\int^\infty_t|Z(t,s)|^2\rd s\rd t\Bigr]^{1/2}.
\end{split}
\end{equation}
Noting that $(\eta,\lambda)\in\cR_g$ with $\cR_g$ defined by \eqref{domain: g}, by the estimates~\eqref{proof: a priori estimate BSVIE: 1} and \eqref{proof: a priori estimate BSVIE: 2}, we get
\begin{align*}
	&\bE\Bigl[\int^\infty_0e^{2\eta t}|Y(t)|^2\rd t+\int^\infty_0e^{2\eta t}\int^\infty_t|Z(t,s)|^2\rd s\rd t\Bigr]^{1/2}\\
	&\leq\frac{1}{1-[K_{g,y}]_1(\eta+\lambda)-[K_{g,z_1}]_2(\lambda)-[K_{g,z_2}]_2(\eta+\lambda)}\bE\Bigl[\int^\infty_0e^{2\eta t}\bigl|\psi(t)\bigr|^2\rd t\Bigr]^{1/2}.
\end{align*}
On the other hand, since $(Y(\cdot),Z(\cdot,\cdot))\in\cM^{2,\eta}_\bF(0,\infty;\bR^m\times\bR^{m\times d})$, it holds that
\begin{equation*}
	\bE\Bigl[\int^t_0|Z(t,s)|^2\rd s\Bigr]=\bE\Bigl[\Bigl|\int^t_0Z(t,s)\rd W(s)\Bigr|^2\Bigr]=\bE\bigl[\bigl|Y(t)-\bE[Y(t)]\bigr|^2\bigr]\leq\bE\bigl[|Y(t)|^2\bigr]
\end{equation*}
for a.e.\ $t\geq0$. Therefore, we obtain
\begin{align*}
	&\bE\Bigl[\int^\infty_0e^{2\eta t}|Y(t)|^2\rd t+\int^\infty_0e^{2\eta t}\int^\infty_0|Z(t,s)|^2\rd s\rd t\Bigr]^{1/2}\\
	&\leq\sqrt{2}\bE\Bigl[\int^\infty_0e^{2\eta t}|Y(t)|^2\rd t+\int^\infty_0e^{2\eta t}\int^\infty_t|Z(t,s)|^2\rd s\rd t\Bigr]^{1/2}\\
	&\leq\frac{\sqrt{2}}{1-[K_{g,y}]_1(\eta+\lambda)\mathalpha{-}[K_{g,z_1}]_2(\lambda)-[K_{g,z_2}]_2(\eta+\lambda)}\bE\Bigl[\int^\infty_0e^{2\eta t}\bigl|\psi(t)\bigr|^2\rd t\Bigr]^{1/2},
\end{align*}
and thus the estimate~\eqref{prop: a priori estimate BSVIE: 1} holds.

Next, we prove the estimate~\eqref{prop: a priori estimate BSVIE: 2}. Define $\overline{Y}(\cdot):=Y(\cdot)-Y'(\cdot)$ and $\overline{Z}(\cdot,\cdot):=Z(\cdot,\cdot)-Z'(\cdot,\cdot)$. Then $(\overline{Y}(\cdot),\overline{Z}(\cdot,\cdot))\in\cM^{2,\eta}_\bF(0,\infty;\bR^m\times\bR^{m\times d})$ is an adapted M-solution of the infinite horizon BSVIE
\begin{equation*}
	\overline{Y}(t)=\overline{\psi}(t)+\int^\infty_te^{-\lambda(s-t)}\overline{g}(t,s,\overline{Y}(s),\overline{Z}(t,s),\overline{Z}(s,t))\rd s-\int^\infty_t\overline{Z}(t,s)\rd W(s),\ t\geq0,
\end{equation*}
where
\begin{align*}
	&\overline{\psi}(t):=\psi(t)-\psi'(t)+\int^\infty_te^{-\lambda(s-t)}\bigl\{g(t,s,Y'(s),Z'(t,s),Z'(s,t))-g'(t,s,Y'(s),Z'(t,s),Z'(s,t))\bigr\}\rd s,\\
	&\overline{g}(t,s,y,z_1,z_2):=g(t,s,y+Y'(s),z_1+Z'(t,s),z_2+Z'(s,t))-g(t,s,Y'(s),Z'(t,s),Z'(s,t)).
\end{align*}
Note that $\overline{\psi}(\cdot)\in L^{2,\eta}_{\cF_\infty}(0,\infty;\bR^m)$, and the driver $\overline{g}$ satisfies Assumption~\ref{assum: BSVIE} with the functions $K_{g,y}$, $K_{g,z_1}$, and $K_{g,z_2}$. By estimate~\eqref{prop: a priori estimate BSVIE: 1}, it holds that
\begin{equation*}
	\bE\Bigl[\int^\infty_0e^{2\eta t}|\overline{Y}(t)|^2\rd t+\int^\infty_0e^{2\eta t}\int^\infty_0|\overline{Z}(t,s)|^2\rd s\rd t\Bigr]^{1/2}\leq C_{\eta,\lambda}\bE\Bigl[\int^\infty_0e^{2\eta t}|\overline{\psi}(t)|^2\rd t\Bigr]^{1/2}.
\end{equation*}
Thus the estimate~\eqref{prop: a priori estimate BSVIE: 2} holds.
\end{proof}

Next, we consider a trivial infinite horizon BSVIE whose generator $g$ vanishes.

%% Lemma

\begin{lemm}\label{lemm: trivial BSVIE}
For any $\psi(\cdot)\in L^{2,\eta}_{\cF_\infty}(0,\infty;\bR^m)$ with $\eta\in\bR$, the infinite horizon BSVIE
\begin{equation}\label{lemm: trivial BSVIE: 1}
	Y(t)=\psi(t)-\int^\infty_tZ(t,s)\rd W(s),\ t\geq0,
\end{equation}
has a unique adapted M-solution $(Y(\cdot),Z(\cdot,\cdot))\in\cM^{2,\eta}_\bF(0,\infty;\bR^m\times\bR^{m\times d})$.
\end{lemm}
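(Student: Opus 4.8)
The plan is to construct the adapted M-solution directly from the martingale representation theorem and then to secure joint measurability and adaptedness by a density argument. First I would fix $t\geq0$ and apply the martingale representation theorem to the $\cF_\infty$-measurable square-integrable random variable $\psi(t)$, writing $\psi(t)=\bE[\psi(t)]+\int^\infty_0Z(t,s)\rd W(s)$ for a suitable adapted integrand $Z(t,\cdot)$. Setting $Y(t):=\psi(t)-\int^\infty_tZ(t,s)\rd W(s)=\bE[\psi(t)]+\int^t_0Z(t,s)\rd W(s)$, the equation~\eqref{lemm: trivial BSVIE: 1} holds by construction, and the second expression is precisely the martingale representation identity built into the definition of $\cM^{2,\eta}_\bF$. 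By It\^o's isometry, $\bE[\int^\infty_0|Z(t,s)|^2\rd s]=\bE[|\psi(t)|^2]-|\bE[\psi(t)]|^2\leq\bE[|\psi(t)|^2]$ and $\bE[|Y(t)|^2]=|\bE[\psi(t)]|^2+\bE[\int^t_0|Z(t,s)|^2\rd s]\leq2\bE[|\psi(t)|^2]$; multiplying by $e^{2\eta t}$ and integrating shows $(Y(\cdot),Z(\cdot,\cdot))\in\cM^{2,\eta}_\bF(0,\infty;\bR^m\times\bR^{m\times d})$ with norm controlled by that of $\psi(\cdot)$.

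The hard part is measurability: the fixed-$t$ representation yields $Z(t,\cdot)$ only up to a.e.\ equivalence and does not by itself produce a jointly $\cF\otimes\cB([0,\infty)^2)$-measurable version. To resolve this I would argue by density. For a step free term $\psi(t)=\sum_i\1_{[t_i,t_{i+1})}(t)\xi_i$ with $\xi_i\in L^2_{\cF_\infty}(\Omega;\bR^m)$, representing each $\xi_i=\bE[\xi_i]+\int^\infty_0\zeta_i(s)\rd W(s)$ and setting $Z(t,s):=\sum_i\1_{[t_i,t_{i+1})}(t)\zeta_i(s)$ gives an explicit jointly measurable, adapted solution. The resulting assignment $\psi(\cdot)\mapsto(Y(\cdot),Z(\cdot,\cdot))$ is linear and, by the isometry estimates above, bounded from $L^{2,\eta}_{\cF_\infty}(0,\infty;\bR^m)$ into $\cM^{2,\eta}_\bF(0,\infty;\bR^m\times\bR^{m\times d})$. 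Since such step free terms are dense in $L^{2,\eta}_{\cF_\infty}(0,\infty;\bR^m)$ and the target is complete (a Hilbert space, as recorded in the Notation), this operator extends uniquely to a bounded linear operator on the whole space.

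It then remains to verify that the limiting pair is a genuine adapted M-solution. Membership in $\cM^{2,\eta}_\bF$ already encodes the M-condition $Y(t)=\bE[Y(t)]+\int^t_0Z(t,s)\rd W(s)$, since that space is a closed subspace. For the equation~\eqref{lemm: trivial BSVIE: 1} I would consider the residual $R(t):=Y(t)-\psi(t)+\int^\infty_tZ(t,s)\rd W(s)$; approximating $\psi(\cdot)$ by step free terms $\psi_n(\cdot)$ with solutions $(Y_n(\cdot),Z_n(\cdot,\cdot))$, the map $\psi(\cdot)\mapsto R(\cdot)$ is continuous in the $L^{2,\eta}$-norm, using the bound $\bE[\int^\infty_0e^{2\eta t}\int^\infty_t|Z(t,s)-Z_n(t,s)|^2\rd s\rd t]\leq\|Z(\cdot,\cdot)-Z_n(\cdot,\cdot)\|_{\sL^{2,\eta}_\bF}^2$, and it vanishes on the dense set; hence $R\equiv0$ for a.e.\ $t\geq0$, a.s.

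Finally, uniqueness is immediate. Any adapted M-solution satisfies, upon combining~\eqref{lemm: trivial BSVIE: 1} with the M-condition, $\psi(t)=\bE[Y(t)]+\int^\infty_0Z(t,s)\rd W(s)$, whence $\bE[Y(t)]=\bE[\psi(t)]$ and therefore $\psi(t)=\bE[\psi(t)]+\int^\infty_0Z(t,s)\rd W(s)$; uniqueness of the martingale representation then fixes $Z(t,\cdot)$ for a.e.\ $t\geq0$, and $Y(\cdot)$ is determined by~\eqref{lemm: trivial BSVIE: 1}. As an alternative to the density argument, one may instead regard $\psi(\cdot)$ as an $L^{2,\eta}(0,\infty;\bR^m)$-valued $\cF_\infty$-measurable random variable and invoke the Hilbert-space-valued martingale representation theorem, which produces $Z$ with the required joint measurability in one stroke; I expect the density route to be the cleaner one to present here.
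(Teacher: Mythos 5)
Your proposal is correct, but it resolves the crux of the lemma --- joint measurability of $Z(\cdot,\cdot)$ --- by a different mechanism than the paper. The paper also starts from the pointwise martingale representation, but its Step~1 treats free terms for which $t\mapsto\psi(t)\in L^2_{\cF_\infty}(\Omega;\bR^m)$ is \emph{continuous}: continuity of $t\mapsto Y(t)$ and $t\mapsto Z(t,\cdot)$ then yields jointly measurable versions. A general $\psi(\cdot)$ is reached in two further approximation steps, first mollifying in time ($\psi_N(t):=N\int_t^{t+1/N}\psi(s)\rd s$, using Lebesgue's differentiation theorem and dominated convergence) under a pointwise exponential bound, and then truncating ($\psi_M(t):=\psi(t)\1_{\{|\psi(t)|\leq Me^{-(|\eta|+1)t}\}}$). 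You instead obtain joint measurability for free by writing down the explicit solution for \emph{step} free terms $\sum_i\1_{[t_i,t_{i+1})}(t)\xi_i$, and then extend the (linear, norm-bounded by the It\^o-isometry estimates) solution map from this dense subspace of $L^{2,\eta}_{\cF_\infty}(0,\infty;\bR^m)=L^2(\bP\otimes e^{2\eta t}\rd t)$ to the whole space, using that $\cM^{2,\eta}_\bF(0,\infty;\bR^m\times\bR^{m\times d})$ is a closed subspace of a Hilbert space (so the M-condition survives the limit) and a residual-continuity argument to verify the equation itself in the limit. Both routes lean on the same two pillars (infinite-horizon martingale representation and completeness of $\cM^{2,\eta}_\bF$); yours is the more functional-analytic and arguably cleaner, replacing two ad hoc approximation stages by a single bounded-extension step, while the paper's is more constructive and produces along the way the observation that $L^2$-continuous free terms yield solutions $Y(t)=\bE_t[\psi(t)]$ depending $L^2$-continuously on $t$. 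Your uniqueness argument coincides with the paper's (which simply calls it trivial).
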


%% Proof

\begin{proof}
The uniqueness is trivial. We prove the existence. We should consider the issues of (joint) measurability and the integrability carefully. We split the proof into three steps.

\emph{Step 1.} Assume that $t\mapsto\psi(t)\in L^2_{\cF_\infty}(\Omega;\bR^m)$ is continuous. By the martingale representation theorem, for any $t\geq0$, there exists a unique adapted process $Z(t,\cdot)\in L^2_\bF(0,\infty;\bR^{m\times d})$ such that $\psi(t)=\bE[\psi(t)]+\int^\infty_0Z(t,s)\rd W(s)$ a.s. Define $Y(t):=\bE_t[\psi(t)]$. Then it holds that
\begin{equation*}
	Y(t)=\psi(t)-\int^\infty_tZ(t,s)\rd W(s)=\bE[Y(t)]+\int^t_0Z(t,s)\rd W(s)\ \text{a.s.}
\end{equation*}
for any $t\geq0$. By the continuity of the map $t\mapsto\psi(t)\in L^2_{\cF_\infty}(\Omega;\bR^m)$, it is easy to see that the maps $t\mapsto Y(t)\in L^2_{\cF_\infty}(\Omega;\bR^m)$ and $t\mapsto Z(t,\cdot)\in L^2_\bF(0,\infty;\bR^{m\times d})$ are continuous. Thus, there exist (jointly) measurable versions of $Y(\cdot)$ and $Z(\cdot,\cdot)$. Furthermore, we have
\begin{align*}
	&\bE\Bigl[\int^\infty_0e^{2\eta t}|Y(t)|^2\rd t]\leq\bE\Bigl[\int^\infty_0e^{2\eta t}|\psi(t)|^2\rd t\Bigr]<\infty,\\
	&\bE\Bigl[\int^\infty_0e^{2\eta t}\int^\infty_0|Z(t,s)|^2\rd s\rd t]\leq\bE\Bigl[\int^\infty_0e^{2\eta t}|\psi(t)|^2\rd t\Bigr]<\infty.
\end{align*}
Thus, the measurable version $(Y(\cdot),Z(\cdot,\cdot))$ is in $\cM^{2,\eta}_\bF(0,\infty;\bR^m\times\bR^{m\times d})$, and it is the adapted M-solution of \eqref{lemm: trivial BSVIE: 1}.

\emph{Step 2.} Assume that there exists a constant $M\in(0,\infty)$ such that $|\psi(t)|\leq Me^{-(|\eta|+1)t}$ for a.e.\ $t\geq0$, a.s. For each $N\in\bN$, define $\psi_N(t):=N\int^{t+1/N}_t\psi(s)\rd s$. Then the map $t\mapsto\psi_N(t)\in L^2_{\cF_\infty}(\Omega;\bR^m)$ is continuous. Furthermore, it holds that $|\psi_N(t)|\leq Me^{-(|\eta|+1)t}$, and hence $\psi_N(\cdot)\in L^{2,\eta}_{\cF_\infty}(0,\infty;\bR^m)$. By Step 1, there exists a unique adapted M-solution $(Y_N(\cdot),Z_N(\cdot,\cdot))\in\cM^{2,\eta}_\bF(0,\infty;\bR^m\times\bR^{m\times d})$ of the infinite horizon BSVIE
\begin{equation*}
	Y_N(t)=\psi_N(t)-\int^\infty_tZ_N(t,s)\rd W(s),\ t\geq0.
\end{equation*}
By Lebesgue's differentiation theorem, we see that $\lim_{N\to\infty}\psi_N(t)=\psi(t)$ for a.e.\ $t\geq0$, a.s. Thus, by the dominated convergence theorem, it holds that
\begin{equation*}
	\lim_{N\to\infty}\bE\Bigl[\int^\infty_0e^{2\eta t}|\psi_N(t)-\psi(t)|^2\rd t\Bigr]=0.
\end{equation*}
From this, it is easy to see that $\{(Y_N(\cdot),Z_N(\cdot,\cdot))\}_{N\in\bN}$ is a Cauchy sequence in $\cM^{2,\eta}_\bF(0,\infty;\bR^m\times\bR^{m\times d})$, and the limit $(Y(\cdot),Z(\cdot,\cdot))\in\cM^{2,\eta}_\bF(0,\infty;\bR^m\times\bR^{m\times d})$ is the adapted M-solution of \eqref{lemm: trivial BSVIE: 1}.

\emph{Step 3.} We consider the general $\psi(\cdot)\in L^{2,\eta}_{\cF_\infty}(0,\infty;\bR^m)$. For each $M\in\bN$, define
\begin{equation*}
	\psi_M(t):=\psi(t)\1_{\{|\psi(t)|\leq Me^{-(|\eta|+1)t}\}}.
\end{equation*}
By Step 2, there exists a unique adapted M-solution $(Y_M(\cdot),Z_M(\cdot,\cdot))\in\cM^{2,\eta}_\bF(0,\infty;\bR^m\times\bR^{m\times d})$ of the infinite horizon BSVIE
\begin{equation*}
	Y_M(t)=\psi_M(t)-\int^\infty_tZ_M(t,s)\rd W(s),\ t\geq0.
\end{equation*}
By the dominated convergence theorem, we see that
\begin{equation*}
	\lim_{M\to\infty}\bE\Bigl[\int^\infty_0e^{2\eta t}|\psi_M(t)-\psi(t)|^2\rd t\Bigr]=0.
\end{equation*}
Thus the sequence $\{(Y_M(\cdot),Z_M(\cdot,\cdot))\}_{M\in\bN}$ converges in $\cM^{2,\eta}_\bF(0,\infty;\bR^m\times\bR^{m\times d})$, and the limit is the adapted M-solution of \eqref{lemm: trivial BSVIE: 1}. This completes the proof.
\end{proof}

For each $\psi(\cdot)\in L^{2,\eta}_{\cF_\infty}(0,\infty;\bR^m)$ and $\gamma\in[0,1]$, consider the following infinite horizon BSVIE:
\begin{equation}\label{Type-II alpha}
	Y_\gamma(t)=\psi(t)+\int^\infty_te^{-\lambda(s-t)}\gamma g(t,s,Y_\gamma(s),Z_\gamma(t,s),Z_\gamma(s,t))\rd s-\int^\infty_tZ_\gamma(t,s)\rd W(s),\ t\geq0.
\end{equation}
We note that when $\gamma=0$, the equation \eqref{Type-II alpha} becomes the trivial infinite horizon BSVIE~\eqref{lemm: trivial BSVIE: 1}, while when $\gamma=1$, it coincides with the original infinite horizon BSVIE~\eqref{BSVIE}. For each $\gamma\in[0,1]$, we consider the following property:
\begin{itemize}
\item[(P$_\gamma$)]
For any $\psi(\cdot)\in L^{2,\eta}_{\cF_\infty}(0,\infty;\bR^m)$, the infinite horizon BSVIE~\eqref{Type-II alpha} admits a unique adapted M-solution $(Y_\gamma(\cdot),Z_\gamma(\cdot,\cdot))\in\cM^{2,\eta}_\bF(0,\infty;\bR^m\times\bR^{m\times d})$.
\end{itemize}
The following lemma is a key step of the method of continuation.

%% Lemma

\begin{lemm}\label{lemm: continuation}
Suppose that Assumption~\ref{assum: BSVIE} holds, and fix $(\eta,\lambda)\in\cR_g$. Assume that the property (P$_{\gamma_0}$) holds for some $\gamma_0\in[0,1)$. Then (P$_\gamma$) holds for any $\gamma\in[0,1]$ with $\gamma_0\leq\gamma\leq\gamma_0+C^{-1}_{\eta,\lambda}$, where $C_{\eta,\lambda}\in(0,\infty)$ is the constant defined in Proposition~\ref{prop: a priori estimate BSVIE}.
\end{lemm}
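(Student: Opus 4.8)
The plan is to realize the solution operator at parameter $\gamma$ as the fixed point of a contraction built from the assumed solvability at $\gamma_0$. Write $\gamma=\gamma_0+\delta$ with $\delta\in[0,C^{-1}_{\eta,\lambda}]$ and split the driver in \eqref{Type-II alpha} as $\gamma g=\gamma_0 g+\delta g$. For each $(y(\cdot),z(\cdot,\cdot))\in\cM^{2,\eta}_\bF(0,\infty;\bR^m\times\bR^{m\times d})$ I would define the perturbed free term
\[
	\tilde\psi(t):=\psi(t)+\delta\int^\infty_te^{-\lambda(s-t)}g(t,s,y(s),z(t,s),z(s,t))\rd s,
\]
and set $\Phi[(y,z)]:=(Y,Z)$, where $(Y,Z)$ is the unique adapted M-solution of \eqref{Type-II alpha} with $\gamma$ replaced by $\gamma_0$ and $\psi$ replaced by $\tilde\psi$; this exists by the assumed property (P$_{\gamma_0}$). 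By construction a fixed point of $\Phi$ is exactly an adapted M-solution of \eqref{Type-II alpha} for the parameter $\gamma$, so the lemma reduces to showing that $\Phi$ is a contraction on the Hilbert space $\cM^{2,\eta}_\bF(0,\infty;\bR^m\times\bR^{m\times d})$.

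To see that $\Phi$ is well-defined I would first check $\tilde\psi(\cdot)\in L^{2,\eta}_{\cF_\infty}(0,\infty;\bR^m)$. Applying Lemma~\ref{lemm: coefficients BSVIE} to the pair $(y,z)$ and the zero pair $(0,0)$, which is legitimate because $g(t,s,0,0,0)=0$ by Assumption~\ref{assum: BSVIE}(iv), bounds the weighted $L^2$-norm of the integral term by $L\,\|(y,z)\|$, where $L:=[K_{g,y}]_1(\eta+\lambda)+[K_{g,z_1}]_2(\lambda)+[K_{g,z_2}]_2(\eta+\lambda)<1$ since $(\eta,\lambda)\in\cR_g$; here $\|\cdot\|$ denotes the $\cM^{2,\eta}_\bF$-norm. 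Hence $\tilde\psi$ lies in the required space and (P$_{\gamma_0}$) indeed applies.

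For the contraction estimate, take $(y_1,z_1),(y_2,z_2)$ with images $(Y_1,Z_1),(Y_2,Z_2)$. Both images solve the \emph{same} equation, namely \eqref{Type-II alpha} with driver $\gamma_0 g$ and discount rate $\lambda$, but with free terms $\tilde\psi_1,\tilde\psi_2$ respectively. Since $0\le\gamma_0\le1$, the driver $\gamma_0 g$ satisfies Assumption~\ref{assum: BSVIE} with Lipschitz coefficients $\gamma_0 K_{g,\cdot}\le K_{g,\cdot}$, so $(\eta,\lambda)$ lies in the corresponding domain and the associated a priori constant is $\sqrt{2}/(1-\gamma_0 L)\le C_{\eta,\lambda}$. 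Applying the stability estimate~\eqref{prop: a priori estimate BSVIE: 2} of Proposition~\ref{prop: a priori estimate BSVIE} with identical drivers, so that the driver-difference term vanishes, gives $\|\Phi[(y_1,z_1)]-\Phi[(y_2,z_2)]\|\le C_{\eta,\lambda}\,\|\tilde\psi_1-\tilde\psi_2\|_{L^{2,\eta}}$. Then Lemma~\ref{lemm: coefficients BSVIE} applied to the difference $\tilde\psi_1-\tilde\psi_2$ yields $\|\tilde\psi_1-\tilde\psi_2\|_{L^{2,\eta}}\le\delta L\,\|(y_1,z_1)-(y_2,z_2)\|$. Combining the two bounds, the Lipschitz constant of $\Phi$ is at most $C_{\eta,\lambda}\,\delta\,L\le L<1$, where I used $C_{\eta,\lambda}\,\delta\le1$ from $\delta\le C^{-1}_{\eta,\lambda}$. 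The Banach fixed point theorem then produces the unique adapted M-solution, establishing (P$_\gamma$).

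The point requiring care is the bookkeeping of constants rather than any deep analytic difficulty: the contraction factor must be controlled \emph{uniformly} in $\gamma_0$. The decisive cancellation is that the product of the a priori constant $C_{\eta,\lambda}$ (which bounds the solution operator of the $\gamma_0$-equation via Proposition~\ref{prop: a priori estimate BSVIE}) with the admissible step size $\delta\le C^{-1}_{\eta,\lambda}$ is at most $1$, leaving a residual factor $L<1$ independent of $\gamma_0$. This uniformity is precisely what permits the continuation to advance by fixed increments $C^{-1}_{\eta,\lambda}$; combined with (P$_0$), which holds by Lemma~\ref{lemm: trivial BSVIE}, it will let one reach $\gamma=1$ and thereby solve the original BSVIE~\eqref{BSVIE} in finitely many steps.
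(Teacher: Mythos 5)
Your proposal is correct and takes essentially the same approach as the paper: the paper's proof is precisely the Picard iteration of your map $\Phi$ (solve the $\gamma_0$-equation with the free term perturbed by $(\gamma-\gamma_0)$ times the driver evaluated at the previous iterate), shown to be Cauchy via the same two ingredients you use, namely the stability estimate \eqref{prop: a priori estimate BSVIE: 2} and Lemma~\ref{lemm: coefficients BSVIE}, with the same cancellation $C_{\eta,\lambda}(\gamma-\gamma_0)\le1$ leaving the factor $[K_{g,y}]_1(\eta+\lambda)+[K_{g,z_1}]_2(\lambda)+[K_{g,z_2}]_2(\eta+\lambda)<1$. Packaging this as the Banach fixed point theorem rather than an explicit Cauchy-sequence argument is only a cosmetic difference.
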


%% Proof

\begin{proof}
Let $\gamma\in[0,1]$ with $\gamma_0\leq\gamma\leq\gamma_0+C^{-1}_{\eta,\lambda}$ be fixed. Take an arbitrary $\psi(\cdot)\in L^{2,\eta}_{\cF_\infty}(0,\infty;\bR^m)$. Let $(Y^{(0)}(\cdot),Z^{(0)}(\cdot,\cdot)):=(0,0)$, and define $(Y^{(i)}(\cdot),Z^{(i)}(\cdot,\cdot))\in\cM^{2,\eta}_\bF(0,\infty;\bR^m\times\bR^{m\times d})$ for $i\in\bN$ as the unique adapted M-solution of the following infinite horizon BSVIE:
\begin{equation*}
	Y^{(i)}(t)=\psi^{(i)}(t)+\int^\infty_te^{-\lambda(s-t)}\gamma_0g(t,s,Y^{(i)}(s),Z^{(i)}(t,s),Z^{(i)}(s,t))\rd s-\int^\infty_tZ^{(i)}(t,s)\rd W(s),\ t\geq0,
\end{equation*}
where
\begin{equation*}
	\psi^{(i)}(t):=\psi(t)+(\gamma-\gamma_0)\int^\infty_te^{-\lambda(s-t)}g(t,s,Y^{(i-1)}(s),Z^{(i-1)}(t,s),Z^{(i-1)}(s,t))\rd s,\ t\geq0.
\end{equation*}
By the assumption, the sequence $\{(Y^{(i)}(\cdot),Z^{(i)}(\cdot,\cdot))\}_{i\in\bN}$ can be defined inductively. Noting that $\gamma_0\in[0,1)$, by the stability estimate \eqref{prop: a priori estimate BSVIE: 2}, together with Lemma~\ref{lemm: coefficients BSVIE}, for each $i\in\bN$, we have
\begin{align*}
	&\bE\Bigl[\int^\infty_0e^{2\eta t}|Y^{(i+1)}(t)-Y^{(i)}(t)|^2\rd t+\int^\infty_0e^{2\eta t}\int^\infty_0|Z^{(i+1)}(t,s)-Z^{(i)}(t,s)|^2\rd s\rd t\Bigr]^{1/2}\\
	&\leq C_{\eta,\lambda}\bE\Bigl[\int^\infty_0e^{2\eta t}|\psi^{(i+1)}(t)-\psi^{(i)}(t)|^2\rd t\Bigr]^{1/2}\\
	&\leq C_{\eta,\lambda}(\gamma-\gamma_0)\bE\Bigl[\int^\infty_0e^{2\eta t}\Bigl(\int^\infty_te^{-\lambda(s-t)}|g(t,s,Y^{(i)}(s),Z^{(i)}(t,s),Z^{(i)}(s,t))\\
	&\hspace{5cm}-g(t,s,Y^{(i-1)}(s),Z^{(i-1)}(t,s),Z^{(i-1)}(s,t))|\rd s\Bigr)^2\rd t\Bigr]^{1/2}\\
	&\leq C_{\eta,\lambda}(\gamma-\gamma_0)\Bigl\{\bigl([K_{g,y}]_1(\eta+\lambda)+[K_{g,z_2}]_2(\eta+\lambda)\bigr)\bE\Bigl[\int^\infty_0e^{2\eta t}|Y^{(i)}(t)-Y^{(i-1)}(t)|^2\rd t\Bigr]^{1/2}\\
	&\hspace{3cm}+[K_{g,z_1}]_2(\lambda)\bE\Bigl[\int^\infty_0e^{2\eta t}\int^\infty_t|Z^{(i)}(t,s)-Z^{(i-1)}(t,s)|^2\rd s\rd t\Bigr]^{1/2}\Bigr\}\\
	&\leq C_{\eta,\lambda}(\gamma-\gamma_0)\bigl\{[K_{g,y}]_1(\eta+\lambda)+[K_{g,z_1}]_2(\lambda)+[K_{g,z_2}]_2(\eta+\lambda)\bigr\}\\
	&\hspace{1cm}\times\bE\Bigl[\int^\infty_0e^{2\eta t}|Y^{(i)}(t)-Y^{(i-1)}(t)|^2\rd t+\int^\infty_0e^{2\eta t}\int^\infty_0|Z^{(i)}(t,s)-Z^{(i-1)}(t,s)|^2\rd s\rd t\Bigr]^{1/2}.
\end{align*}
Since $(\eta,\lambda)\in\cR_g$ with $\cR_g$ defined by \eqref{domain: g}, and $\gamma_0\leq\gamma\leq\gamma_0+C^{-1}_{\eta,\lambda}$, we have $C_{\eta,\lambda}(\gamma-\gamma_0)\bigl\{[K_{g,y}]_1(\eta+\lambda)+[K_{g,z_1}]_2(\lambda)+[K_{g,z_2}]_2(\eta+\lambda)\bigr\}<1$. Thus the sequence $\{(Y^{(i)}(\cdot),Z^{(i)}(\cdot,\cdot))\}_{i\in\bN}$ is Cauchy in $\cM^{2,\eta}_\bF(0,\infty;\bR^m\times\bR^{m\times d})$. Denote the limit by $(Y^{(\infty)}(\cdot),Z^{(\infty)}(\cdot,\cdot))\in\cM^{2,\eta}_\bF(0,\infty;\bR^m\times\bR^{m\times d})$. Again by Lemma~\ref{lemm: coefficients BSVIE}, we see that
\begin{align*}
	&\Bigl(\int^\infty_te^{-\lambda(s-t)}g(t,s,Y^{(i)}(s),Z^{(i)}(t,s),Z^{(i)}(s,t))\rd s\Bigr)_{t\geq0}\\
	&\to\Bigl(\int^\infty_te^{-\lambda(s-t)}g(t,s,Y^{(\infty)}(s),Z^{(\infty)}(t,s),Z^{(\infty)}(s,t))\rd s\Bigr)_{t\geq0}
\end{align*}
in $L^{2,\eta}_{\cF_\infty}(0,\infty;\bR^m)$ as $i\to\infty$. Therefore, it holds that
\begin{equation*}
	Y^{(\infty)}(t)=\psi(t)+\int^\infty_te^{-\lambda(s-t)}\gamma g(t,s,Y^{(\infty)}(s),Z^{(\infty)}(t,s),Z^{(\infty)}(s,t))\rd s-\int^\infty_tZ^{(\infty)}(t,s)\rd W(s)
\end{equation*}
for a.e.\ $t\geq0$, a.s., and thus the pair $(Y^{(\infty)}(\cdot),Z^{(\infty)}(\cdot,\cdot))\in\cM^{2,\eta}_\bF(0,\infty;\bR^m\times\bR^{m\times d})$ is an adapted M-solution of the infinite horizon BSVIE~\eqref{Type-II alpha} with the parameter $\gamma$. Furthermore, by Proposition~\ref{prop: a priori estimate BSVIE}, the adapted M-solution is unique. Since $\psi(\cdot)\in L^{2,\eta}_{\cF_\infty}(0,\infty;\bR^m)$ is arbitrary, we see that the property (P$_\gamma$) holds. This completes the proof.
\end{proof}

Now we are ready to prove the well-posedness of infinite horizon BSVIE~\eqref{BSVIE}.

%% Theorem

\begin{theo}\label{theo: well-posedness BSVIE}
Suppose that Assumption~\ref{assum: BSVIE} holds, and fix $(\eta,\lambda)\in\cR_g$. Then for any $\psi(\cdot)\in L^{2,\eta}_{\cF_\infty}(0,\infty;\bR^m)$, there exists a unique adapted M-solution $(Y(\cdot),Z(\cdot,\cdot))\in\cM^{2,\eta}_\bF(0,\infty;\bR^m\times\bR^{m\times d})$ of infinite horizon BSVIE~\eqref{BSVIE}.
\end{theo}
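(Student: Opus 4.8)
The plan is to apply the \emph{method of continuation}, assembling the two ingredients already prepared: the base case from Lemma~\ref{lemm: trivial BSVIE} and the one-step extension from Lemma~\ref{lemm: continuation}. The crucial observation is that the step length $C_{\eta,\lambda}^{-1}$ appearing in Lemma~\ref{lemm: continuation} is a fixed positive constant depending only on $(\eta,\lambda)$ and the Lipschitz data, and \emph{not} on the base point $\gamma_0$. This uniformity is precisely what allows one to march from $\gamma=0$ to $\gamma=1$ in finitely many steps.

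Concretely, I would first note that property (P$_0$) holds: when $\gamma=0$, equation~\eqref{Type-II alpha} reduces to the trivial BSVIE~\eqref{lemm: trivial BSVIE: 1}, for which Lemma~\ref{lemm: trivial BSVIE} guarantees a unique adapted M-solution in $\cM^{2,\eta}_\bF(0,\infty;\bR^m\times\bR^{m\times d})$ for every free term $\psi(\cdot)\in L^{2,\eta}_{\cF_\infty}(0,\infty;\bR^m)$. Next I would set $\delta:=C_{\eta,\lambda}^{-1}\in(0,1)$ and define $\gamma_k:=\min\{k\delta,1\}$ for $k=0,1,2,\dots$. Starting from (P$_{\gamma_0}$)$\,=\,$(P$_0$), Lemma~\ref{lemm: continuation} yields (P$_{\gamma_1}$); as long as $\gamma_k<1$, I can reapply the lemma with base point $\gamma_k$ to obtain (P$_{\gamma_{k+1}}$), since the advance $\delta$ is always available regardless of the current base point. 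Because each application advances $\gamma$ by the fixed amount $\delta$, after $N:=\lceil C_{\eta,\lambda}\rceil$ iterations one reaches $\gamma_N=1$, so (P$_1$) holds.

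Finally, I would observe that (P$_1$) is exactly the assertion of the theorem, since equation~\eqref{Type-II alpha} with $\gamma=1$ coincides with the original infinite horizon BSVIE~\eqref{BSVIE}; uniqueness, already part of each (P$_\gamma$), is in particular included for $\gamma=1$.

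There is essentially no analytic obstacle remaining at this stage, as all the substantive work has been front-loaded into the a priori and stability estimates of Proposition~\ref{prop: a priori estimate BSVIE} and the contraction argument inside Lemma~\ref{lemm: continuation}. The only point requiring a word of care is the book-keeping that the number of steps is finite and independent of the free term, i.e.\ that $C_{\eta,\lambda}<\infty$. This holds precisely because $(\eta,\lambda)\in\cR_g$ forces the denominator $1-[K_{g,y}]_1(\eta+\lambda)-[K_{g,z_1}]_2(\lambda)-[K_{g,z_2}]_2(\eta+\lambda)$ to be strictly positive, so that $\delta>0$ and the finitely many extensions indeed cover the whole interval $[0,1]$.
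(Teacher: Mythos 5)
Your proof is correct and takes essentially the same approach as the paper: establish (P$_0$) via Lemma~\ref{lemm: trivial BSVIE}, then iterate Lemma~\ref{lemm: continuation} with the uniform step size $C_{\eta,\lambda}^{-1}$, which is independent of the base point, until (P$_1$) is reached. Your explicit book-keeping (the finite number of steps $\lceil C_{\eta,\lambda}\rceil$ and the observation that $(\eta,\lambda)\in\cR_g$ guarantees $C_{\eta,\lambda}<\infty$) merely spells out what the paper compresses into ``by repeating this procedure.''
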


%% Proof

\begin{proof}
By Lemma~\ref{lemm: trivial BSVIE}, the property (P$_0$) holds. By Lemma~\ref{lemm: continuation}, we see that the property (P$_{\gamma_1}$) holds for any $\gamma_1\in[0,1]$ with $0\leq\gamma_1\leq C^{-1}_{\eta,\lambda}$. Again by Lemma~\ref{lemm: continuation}, (P$_{\gamma_2}$) holds for any $\gamma_2\in[0,1]$ with $\gamma_1\leq\gamma_2\leq\gamma_1+C^{-1}_{\eta,\lambda}$. By repeating this procedure, we see that the property (P$_1$) holds. This completes the proof.
\end{proof}

%% Remark

\begin{rem}\label{rem: BSVIE nonzero}
As in the SVIEs case, the above results can be easily generalized to the case where $g(t,s,0,0,0)$ is nonzero. Indeed, suppose that $g$ satisfies Assumption~\ref{assum: BSVIE} (i), (ii), (iii), and
\begin{equation*}
	\bE\Bigl[\int^\infty_0e^{2\eta t}\Bigl(\int^\infty_te^{-\lambda(s-t)}|g(t,s,0,0,0)|\rd s\Bigr)^2\rd t\Bigr]<\infty
\end{equation*}
with $(\eta,\lambda)\in\cR_g$. For any given $\psi(\cdot)\in L^{2,\eta}_{\cF_\infty}(0,\infty;\bR^m)$, define
\begin{equation*}
	\tilde{\psi}(t):=\psi(t)+\int^\infty_te^{-\lambda(s-t)}g(t,s,0,0,0)\rd s\ \text{and}\ \tilde{g}(t,s,y,z_1,z_2):=g(t,s,y,z_1,z_2)-g(t,s,0,0,0)
\end{equation*}
for $(t,s,y,z_1,z_2)\in\Delta[0,\infty)\times\bR^m\times\bR^{m\times d}\times\bR^{m\times d}$. Then $\tilde{\psi}(\cdot)\in L^{2,\eta}_{\cF_\infty}(0,\infty;\bR^m)$, and $\tilde{g}$ satisfies all the conditions in Assumption~\ref{assum: BSVIE} with the functions $K_{g,y}$, $K_{g,z_1}$ and $K_{g,z_2}$. Thus, there exists a unique adapted M-solution $(Y(\cdot),Z(\cdot,\cdot))\in\cM^{2,\eta}_\bF(0,\infty;\bR^m\times\bR^{m\times d})$ to the infinite horizon BSVIE
\begin{equation*}
	Y(t)=\tilde{\psi}(t)+\int^\infty_te^{-\lambda(s-t)}\tilde{g}(t,s,Y(s),Z(t,s),Z(s,t))\rd s-\int^\infty_tZ(t,s)\rd W(s),\ t\geq0.
\end{equation*}
Clearly, $(Y(\cdot),Z(\cdot,\cdot))$ is the unique adapted M-solution of the original infinite horizon BSVIE~\eqref{BSVIE}.
\end{rem}

%%%%

\subsection{Convergence of finite horizon BSVIEs}

%%%%

In this subsection, we show that the adapted M-solutions to finite horizon BSVIEs converge to the one of the original infinite horizon BSVIE~\eqref{BSVIE}. For each $\psi(\cdot)\in L^{2,\eta}_{\cF_\infty}(0,\infty;\bR^m)$ and $T>0$, define
\begin{equation*}
	\psi_T(t):=\bE_T\bigl[\psi(t)\bigr]\1_{[0,T]}(t),\ t\geq0.
\end{equation*}
We consider the following BSVIE on $[0,T]$:
\begin{equation*}
	\tilde{Y}_T(t)=\psi_T(t)+\int^T_te^{-\lambda(s-t)}g(t,s,\tilde{Y}_T(s),\tilde{Z}_T(t,s),\tilde{Z}_T(s,t))\rd s-\int^T_t\tilde{Z}_T(t,s)\rd W(s),\ t\in[0,T].
\end{equation*}
The above BSVIE has a unique adapted M-solution $(\tilde{Y}_T(\cdot),\tilde{Z}_T(\cdot,\cdot))$ on $[0,T]$ (see Yong~\cite{Yo08}). We extend it to infinite horizon processes:
\begin{equation*}
\begin{cases}
	Y_T(t):=\tilde{Y}_T(t)\1_{[0,T]}(t),\ t\in[0,\infty),\\
	Z_T(t,s):=\tilde{Z}_T(t,s)\1_{[0,T]^2}(t,s),\ (t,s)\in[0,\infty)^2.
\end{cases}
\end{equation*}

%% Theorem

\begin{theo}\label{theo: finite horizon}
Suppose that Assumption~\ref{assum: BSVIE} holds, and fix $(\eta,\lambda)\in\cR_g$ with $\cR_g$ defined by \eqref{domain: g}. Then for any $\psi(\cdot)\in L^{2,\eta}_{\cF_\infty}(0,\infty;\bR^m)$, it holds that
\begin{equation*}
	\lim_{T\to\infty}\bE\Bigl[\int^\infty_0e^{2\eta t}|Y(t)-Y_T(t)|^2\rd t+\int^\infty_0e^{2\eta t}\int^\infty_0|Z(t,s)-Z_T(t,s)|^2\rd s\rd t\Bigr]=0.
\end{equation*}
\end{theo}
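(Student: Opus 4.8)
The plan is to recognize the zero-extended finite horizon solution $(Y_T(\cdot),Z_T(\cdot,\cdot))$ as the adapted M-solution of an infinite horizon BSVIE with the \emph{same} driver $g$ and discount rate $\lambda$ but with free term $\psi_T(\cdot)$, and then to invoke the stability estimate \eqref{prop: a priori estimate BSVIE: 2} of Proposition~\ref{prop: a priori estimate BSVIE}. This reduces the whole problem to showing that $\psi_T(\cdot)\to\psi(\cdot)$ in the weighted space $L^{2,\eta}_{\cF_\infty}(0,\infty;\bR^m)$.

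The reduction step is the crux, and it is where I expect the main (though modest) difficulty to lie. For $t\in[0,T]$, since $Y_T(s)=0$, $Z_T(t,s)=0$, and $Z_T(s,t)=0$ whenever $s>T$, Assumption~\ref{assum: BSVIE}~(iv) gives $g(t,s,Y_T(s),Z_T(t,s),Z_T(s,t))=g(t,s,0,0,0)=0$ for $s>T$; hence the integrals $\int^T_t$ in the finite horizon equation may be replaced by $\int^\infty_t$ without changing their values, and likewise $\int^\infty_tZ_T(t,s)\rd W(s)=\int^T_t\tilde{Z}_T(t,s)\rd W(s)$. For $t>T$ all terms vanish, so the infinite horizon equation holds trivially. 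Together with the observation that the M-property $Y_T(t)=\bE[Y_T(t)]+\int^t_0Z_T(t,s)\rd W(s)$ is inherited from the finite horizon M-solution for $t\leq T$ and is trivial for $t>T$, this shows that $(Y_T(\cdot),Z_T(\cdot,\cdot))\in\cM^{2,\eta}_\bF(0,\infty;\bR^m\times\bR^{m\times d})$ (membership being clear since the processes are supported on $[0,T]$, where the weight is bounded) is the adapted M-solution of \eqref{BSVIE} with $\psi$ replaced by $\psi_T$.

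Applying \eqref{prop: a priori estimate BSVIE: 2} to the two solutions $(Y,Z)$ and $(Y_T,Z_T)$ with the common driver $g'=g$ makes the driver-difference term vanish, so that
\begin{equation*}
\bE\Bigl[\int^\infty_0e^{2\eta t}|Y(t)-Y_T(t)|^2\rd t+\int^\infty_0e^{2\eta t}\int^\infty_0|Z(t,s)-Z_T(t,s)|^2\rd s\rd t\Bigr]^{1/2}\leq C_{\eta,\lambda}\bE\Bigl[\int^\infty_0e^{2\eta t}|\psi(t)-\psi_T(t)|^2\rd t\Bigr]^{1/2}.
\end{equation*}
It thus suffices to prove that the right-hand side tends to $0$. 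By Tonelli's theorem the squared norm equals $\int^\infty_0e^{2\eta t}\bE[|\psi(t)-\psi_T(t)|^2]\rd t$, and splitting at $T$ gives the integrand $e^{2\eta t}\bE[|\psi(t)-\bE_T[\psi(t)]|^2]$ for $t\leq T$ and $e^{2\eta t}\bE[|\psi(t)|^2]$ for $t>T$. Since the orthogonality of conditional expectation yields $\bE[|\psi(t)-\bE_T[\psi(t)]|^2]=\bE[|\psi(t)|^2]-\bE[|\bE_T[\psi(t)]|^2]\leq\bE[|\psi(t)|^2]$, in both cases the integrand is dominated by $e^{2\eta t}\bE[|\psi(t)|^2]$, which is integrable over $[0,\infty)$ and independent of $T$.

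Finally, for each fixed $t$ the integrand converges to $0$ as $T\to\infty$: once $T>t$ one has $\bE_T[\psi(t)]=\bE[\psi(t)\mid\cF_T]\to\bE[\psi(t)\mid\cF_\infty]=\psi(t)$ in $L^2$ by the martingale convergence theorem, using that $\psi(t)$ is $\cF_\infty$-measurable. The dominated convergence theorem then yields $\int^\infty_0e^{2\eta t}\bE[|\psi(t)-\psi_T(t)|^2]\rd t\to0$, which combined with the displayed stability estimate completes the argument.
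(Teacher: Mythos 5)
Your proof is correct and takes essentially the same route as the paper's own: identify the zero-extended finite horizon solution $(Y_T(\cdot),Z_T(\cdot,\cdot))$ as the adapted M-solution of the infinite horizon BSVIE~\eqref{BSVIE} with free term $\psi_T(\cdot)$ and the same driver and discount rate, apply the stability estimate~\eqref{prop: a priori estimate BSVIE: 2} with $g'=g$, and reduce everything to $\psi_T(\cdot)\to\psi(\cdot)$ in $L^{2,\eta}_{\cF_\infty}(0,\infty;\bR^m)$. The only difference is that you spell out this last convergence in detail (orthogonality of conditional expectation, L\'evy's upward martingale convergence, dominated convergence), a step the paper disposes of in one sentence; your elaboration is accurate.
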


%% Proof

\begin{proof}
Let $T>0$ be fixed. Clearly, $Y_T(\cdot)\in L^{2,\eta}_\bF(0,\infty;\bR^m)$, $Z_T(\cdot,\cdot)\in \sL^{2,\eta}_\bF(0,\infty;\bR^{m\times d})$, and the following relation holds:
\begin{equation*}
	Y_T(t)=\bE[Y_T(t)]+\int^t_0Z_T(t,s)\rd W(s)
\end{equation*}
for a.e.\ $t\geq0$, a.s. Thus the pair $(Y_T(\cdot),Z_T(\cdot,\cdot))$ is in $\cM^{2,\eta}_\bF(0,\infty;\bR^m\times\bR^{m\times d})$. Furthermore, noting that $\psi_T(t)=0$ for $t>T$ and $g(t,s,0,0,0)=0$ for a.e.\ $(t,s)\in\Delta[0,\infty)$, a.s., we see that
\begin{equation*}
	Y_T(t)=\psi_T(t)+\int^\infty_te^{-\lambda(s-t)}g(t,s,Y_T(s),Z_T(t,s),Z_T(s,t))\rd s-\int^\infty_tZ_T(t,s)\rd W(s)
\end{equation*}
for a.e.\ $t\geq0$, a.s. This implies that $(Y_T(\cdot),Z_T(\cdot,\cdot))\in\cM^{2,\eta}_\bF(0,\infty;\bR^m\times\bR^{m\times d})$ is the (unique) adapted M-solution of the infinite horizon BSVIE~\eqref{BSVIE} with the free term $\psi_T(\cdot)$ and the discount rate $\lambda$. By Proposition~\ref{prop: a priori estimate BSVIE}, we see that
\begin{align*}
	&\bE\Bigl[\int^\infty_0e^{2\eta t}|Y(t)-Y_T(t)|^2\rd t+\int^\infty_0e^{2\eta t}\int^\infty_0|Z(t,s)-Z_T(t,s)|^2\rd s\rd t\Bigr]^{1/2}\\
	&\leq C_{\eta,\lambda}\bE\Bigl[\int^\infty_0e^{2\eta t}\bigl|\psi(t)-\psi_T(t)\bigr|^2\rd t\Bigr]^{1/2}=C_{\eta,\lambda}\bE\Bigl[\int^\infty_0e^{2\eta t}\bigl|\psi(t)-\bE_T[\psi(t)]\1_{[0,T]}(t)\bigr|^2\rd t\Bigr]^{1/2},
\end{align*}
where $C_{\eta,\lambda}\in(0,\infty)$ is the constant defined in Proposition~\ref{prop: a priori estimate BSVIE}. Since $\psi(\cdot)\in L^{2,\eta}_{\cF_\infty}(0,\infty;\bR^m)$, the last expectation tends to zero as $T\to\infty$. This completes the proof.
\end{proof}

%% Remark

\begin{rem}
From the above result, we can extend some important properties of finite horizon BSVIEs to the infinite horizon case. For example, under suitable assumptions, comparison theorems for infinite horizon BSVIEs can be proved by considering the corresponding convergent sequence of finite horizon BSVIEs. We do not come into this topic in this paper. For detailed discussions on comparison theorems for finite horizon BSVIEs, see Wang and Yong~\cite{WaTYo15}.
\end{rem}

%%%%

\subsection{A variation of constant formula}

%%%%

Consider the following infinite horizon linear Type-I BSVIE:
\begin{equation}\label{linear}
	Y(t)=\psi(t)+\int^\infty_te^{-\lambda(s-t)}\Bigl\{A(t,s)Y(s)+\sum^d_{k=1}B_k(t,s)Z^k(t,s)\Bigr\}\rd s-\int^\infty_tZ(t,s)\rd W(s),\ t\geq0.
\end{equation}
We impose the following assumptions on the coefficients $A,B_k$, $k=1,\dots,d$:

%% Assumption

\begin{assum}\label{assum: linear}
$A,B_k:\Omega\times\Delta[0,\infty)\to\bR^{m\times m}$, $k=1,\dots,d$, are measurable; $A(t,\cdot)$, $B_k(t,\cdot)$, $k=1,\dots,d$, are adapted for each $t\geq0$; there exist $K_A\in L^{1,*}(0,\infty;\bR_+)$ and $K_B\in L^{2,*}(0,\infty;\bR_+)$ such that
\begin{equation*}
	|A(t,s)|\leq K_A(s-t)\ \text{and}\ \Bigl(\sum^d_{k=1}|B_k(t,s)|^2\Bigr)^{1/2}\leq K_B(s-t)
\end{equation*}
for a.e.\ $(t,s)\in\Delta[0,\infty)$, a.s.
\end{assum}

As before, we define
\begin{equation}\label{domain: linear}
	\cR_{A,B}:=\{(\eta,\lambda)\in\bR^2\,|\,[K_A]_1(\eta+\lambda)+[K_B]_2(\lambda)<1\}.
\end{equation}
By Theorem~\ref{theo: well-posedness BSVIE}, under Assumption~\ref{assum: linear}, for any $\psi(\cdot)\in L^{2,\eta}_{\cF_\infty}(0,\infty;\bR^m)$ with $(\eta,\lambda)\in\cR_{A,B}$, there exists a unique adapted M-solution $(Y(\cdot),Z(\cdot,\cdot))\in\cM^{2,\eta}_\bF(0,\infty;\bR^m\times\bR^{m\times d})$ of infinite horizon linear Type-I BSVIE~\eqref{linear}.

The objective of this subsection is to establish a \emph{variation of constant formula} for \eqref{linear}, which provides an explicit form of $Y(\cdot)$. To do so, we introduce the following notations:
\begin{itemize}
\item
$\mathbb{B}$ denotes the Banach space consisting of (jointly) measurable maps $\zeta:\Omega\times\Delta[0,\infty)\to\bR^{m\times m}$ such that $\zeta(t,\cdot)$ is adapted for a.e.\ $t\geq0$, $[t,\infty)\ni s\mapsto \zeta(t,s)\in L^2_{\cF_\infty}(\Omega;\bR^{m\times m})$ is continuous for a.e.\ $t\geq0$, and the norm $\|\zeta(\cdot,\cdot)\|_\mathbb{B}:=\mathrm{ess\,sup}_{t\in[0,\infty)}\sup_{s\in[t,\infty)}\bE\bigl[|\zeta(t,s)|^2\bigr]^{1/2}$ is finite.
\item
For a.e.\ $t\in[0,\infty)$, define $\Phi(t,\cdot)$ as the solution of the SDE
\begin{equation}\label{linear SDE}
	\Phi(t,s)=I_m+\sum^d_{k=1}\int^s_te^{-\lambda(r-t)}\Phi(t,r)B_k(t,r)\rd W_k(r),\ s\geq t,
\end{equation}
where $I_m\in\bR^{m\times m}$ is the identity matrix.
\item
$\Xi_1(t,s):=\Phi(t,s)A(t,s)$ and
\begin{equation*}
	\Xi_{i+1}(t,s):=\int^s_t\Xi_i(t,r)\Xi_1(r,s)\rd r,\ i\in\bN,
\end{equation*}
for $(t,s)\in\Delta[0,\infty)$.
\item
$R(t,s):=\sum^\infty_{i=1}\Xi_i(t,s)$ for $(t,s)\in\Delta[0,\infty)$.
\end{itemize}
Since the coefficients $A,B_k$, $k=1,\dots,d$, are possibly unbounded, we have to treat some estimates carefully. The following lemma justifies the above notations.

%% Lemma

\begin{lemm}\label{lemm: linear}
Suppose that measurable maps $A,B_k:\Omega\times\Delta[0,\infty)\to\bR^{m\times m}$, $k=1,\dots,d$, satisfy Assumption~\ref{assum: linear}. Let $(\eta,\lambda)\in\cR_{A,B}$, where $\cR_{A,B}$ is defined by \eqref{domain: linear}.
\begin{itemize}
\item[(i)]
There exists a unique process $\Phi(\cdot,\cdot)\in\mathbb{B}$ satisfying \eqref{linear SDE} for any $s\geq0$, for a.e.\ $t\geq0$, a.s. Moreover, for a.e.\ $t\in[0,\infty)$, $\Phi(t,\cdot)$ is a square integrable martingale, and thus the limit $\Phi(t,\infty):=\lim_{s\to\infty}\Phi(t,s)$ exists in $L^2_{\cF_\infty}(\Omega;\bR^{m\times m})$. Furthermore, the following estimate holds:
\begin{equation}\label{lemm: linear: 1}
	\bE_t\bigl[|\Phi(t,s)|^2_\mathrm{op}\bigr]^{1/2}\leq\frac{1}{1-[K_B]_2(\lambda)},\ \forall\,s\in[t,\infty),\ \text{for a.e.}\ t\in[0,\infty),\ \text{a.s.},
\end{equation}
where, for each $\mathcal{A}\in\bR^{m\times m}$, $|\mathcal{A}|_\mathrm{op}$ denotes the operator norm of $\mathcal{A}$ as a linear operator on $\bR^m$.
\item[(ii)]
For any $\zeta(\cdot)\in L^{2,\eta}_\bF(0,\infty;\bR^m)$, it holds that
\begin{equation}\label{lemm: linear: 2}
\begin{split}
	&\sum^\infty_{i=1}\bE\Bigl[\int^\infty_0e^{2\eta t}\bE_t\Bigl[\int^\infty_te^{-\lambda(s-t)}|\Xi_i(t,s)|\,|\zeta(s)|\rd s\Bigr]^2\rd t\Bigr]^{1/2}\\
	&\leq\sum^\infty_{i=1}\Bigl(\frac{[K_A]_1(\eta+\lambda)}{1-[K_B]_2(\lambda)}\Bigr)^i\bE\Bigl[\int^\infty_0e^{2\eta t}|\zeta(t)|^2\rd t\Bigr]<\infty.
\end{split}
\end{equation}
In particular, $R(t,s)$ is well-defined for a.e.\ $(t,s)\in\Delta[0,\infty)$, a.s.
\end{itemize}
\end{lemm}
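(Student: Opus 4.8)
The plan is to build $\Phi$ as a Neumann (Picard) series, to read off every assertion of part~(i) from the termwise estimates, and then to bootstrap part~(ii) from the operator-norm bound \eqref{lemm: linear: 1}. Throughout, $(\eta,\lambda)\in\cR_{A,B}$ forces $[K_B]_2(\lambda)<1$ and $[K_A]_1(\eta+\lambda)<1-[K_B]_2(\lambda)$, which is what makes the relevant geometric series summable.

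For part~(i), set $\Phi^{[0]}(t,s):=I_m$ and, for $n\ge0$,
\[
\Phi^{[n+1]}(t,s):=\sum_{k=1}^d\int_t^s e^{-\lambda(r-t)}\Phi^{[n]}(t,r)B_k(t,r)\rd W_k(r),\quad s\ge t,
\]
so that the partial sums $\sum_{n=0}^N\Phi^{[n]}$ are exactly the Picard iterates of \eqref{linear SDE}. The key estimate is
\[
\bE_t\bigl[|\Phi^{[n]}(t,s)|^2\bigr]^{1/2}\le [K_B]_2(\lambda)^n,\quad n\ge1,
\]
with $|\cdot|$ the Frobenius norm, which I would prove by induction: the conditional It\^o isometry, Fubini, the tower property, and the submultiplicative bound $\sum_k|\Phi^{[n]}B_k|^2\le|\Phi^{[n]}|^2\sum_k|B_k|_{\mathrm{op}}^2\le|\Phi^{[n]}|^2K_B(\cdot)^2$ reduce the $(n{+}1)$-st conditional second moment to $[K_B]_2(\lambda)^2$ times the $n$-th, exactly in the spirit of Lemma~\ref{lemm: coefficients BSVIE}. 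Since $[K_B]_2(\lambda)<1$, the series then converges geometrically in $\mathbb{B}$; its limit $\Phi$ is $L^2$-continuous in $s$ and adapted (so a jointly measurable version exists, as in the proof of Lemma~\ref{lemm: trivial BSVIE}), solves \eqref{linear SDE}, and is unique, since the difference of two solutions solves the homogeneous equation, for which the same estimate with ratio $[K_B]_2(\lambda)^2<1$ forces it to vanish. Being a drift-free, $L^2$-bounded local martingale, $\Phi(t,\cdot)$ is a square-integrable martingale, whence $\Phi(t,\infty)$ exists in $L^2$.

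The delicate point, where I expect the main obstacle, is the operator-norm bound \eqref{lemm: linear: 1}: one cannot simply replace $|\cdot|_{\mathrm{op}}$ by $|\cdot|$ throughout, since $|I_m|=\sqrt{m}$ would spoil the constant. Instead I isolate the identity term. By (conditional) Minkowski's inequality,
\[
\bE_t\bigl[|\Phi(t,s)|_{\mathrm{op}}^2\bigr]^{1/2}\le\sum_{n\ge0}\bE_t\bigl[|\Phi^{[n]}(t,s)|_{\mathrm{op}}^2\bigr]^{1/2}\le|I_m|_{\mathrm{op}}+\sum_{n\ge1}\bE_t\bigl[|\Phi^{[n]}(t,s)|^2\bigr]^{1/2},
\]
where $|\cdot|_{\mathrm{op}}\le|\cdot|$ is used only on the terms $n\ge1$, which carry no identity factor and hence lose nothing in the recursion. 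Since $|I_m|_{\mathrm{op}}=1$, the geometric sum gives $1+\sum_{n\ge1}[K_B]_2(\lambda)^n=(1-[K_B]_2(\lambda))^{-1}$, which is \eqref{lemm: linear: 1}.

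For part~(ii), write $\theta:=[K_A]_1(\eta+\lambda)/(1-[K_B]_2(\lambda))$ (so $\theta<1$), denote by $\|\cdot\|$ the weight-$e^{2\eta t}$ norm of $L^{2,\eta}_\bF$, and for nonnegative adapted $h$ set $\Gamma_i[h](t):=\bE_t\bigl[\int_t^\infty e^{-\lambda(s-t)}|\Xi_i(t,s)|h(s)\rd s\bigr]$. I would prove $\|\Gamma_i[h]\|\le\theta^i\|h\|$ by induction on $i$. The base case $i=1$ uses $|\Xi_1(t,s)|\le|\Phi(t,s)|_{\mathrm{op}}K_A(s-t)$, conditional Cauchy--Schwarz together with \eqref{lemm: linear: 1}, and then a Young/Minkowski convolution estimate (the tower property collapsing $\bE[\,e^{2\eta s}\bE_t[|h(s)|^2]\,]$ to $\bE[e^{2\eta s}|h(s)|^2]$), exactly as for the term $I_y$ in Lemma~\ref{lemm: coefficients BSVIE}, producing the factor $[K_A]_1(\eta+\lambda)$. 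For the inductive step I use $|\Xi_{i+1}(t,s)|\le\int_t^s|\Xi_i(t,r)||\Xi_1(r,s)|\rd r$; switching the order of integration, inserting $\bE_r$ by the tower property (legitimate since $|\Xi_i(t,r)|$ is $\cF_r$-measurable), and splitting $e^{-\lambda(s-t)}=e^{-\lambda(r-t)}e^{-\lambda(s-r)}$ give the pointwise domination $\Gamma_{i+1}[h]\le\Gamma_i[\Gamma_1[h]]$, so that the induction hypothesis and the base case yield $\|\Gamma_{i+1}[h]\|\le\theta^i\|\Gamma_1[h]\|\le\theta^{i+1}\|h\|$. Summing the geometric series gives \eqref{lemm: linear: 2}, and the finiteness of $\sum_i\Gamma_i[h]$ in $L^{2,\eta}$ (monotone convergence, then Tonelli applied to a fixed strictly positive test process $h$) shows $\sum_i|\Xi_i(t,s)|<\infty$ for a.e.\ $(t,s)$, a.s., so that $R$ is well-defined.
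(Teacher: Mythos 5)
Your proposal is correct, but it is organized differently from the paper's proof in both parts, so a comparison is in order. For part (i), the paper does not expand a Neumann series: it runs the contraction mapping principle on $\mathbb{B}$ (the map $\Phi\mapsto I_m+\sum_k\int_t^\cdot e^{-\lambda(r-t)}\Phi(t,r)B_k(t,r)\rd W_k(r)$ has Lipschitz constant $[K_B]_2(\lambda)<1$) to get existence and uniqueness, and then proves \eqref{lemm: linear: 1} by an absorption argument applied to the solution itself: it bounds $\sup_{s\ge t}\bE_t[|\Phi(t,s)|^2_{\mathrm{op}}]^{1/2}$ by $1+[K_B]_2(\lambda)\sup_{s\ge t}\bE_t[|\Phi(t,s)|^2_{\mathrm{op}}]^{1/2}$ (using the conditional It\^o isometry and $|\mathcal{A}\mathcal{B}|\le|\mathcal{A}|_{\mathrm{op}}|\mathcal{B}|$) and rearranges; this step tacitly requires the a priori finiteness of that supremum, whereas your termwise geometric bounds $\bE_t[|\Phi^{[n]}(t,s)|^2]^{1/2}\le[K_B]_2(\lambda)^n$ ($n\ge1$) with the identity term isolated give the same constant by summation and sidestep that issue entirely; your observation that $|I_m|_{\mathrm{op}}=1$ must replace $|I_m|=\sqrt{m}$ is exactly the point that makes the constant come out right in both arguments. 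For part (ii), the paper's induction is pointwise: it establishes $e^{-(\eta+\lambda)(s-t)}\bE_t[|\Xi_i(t,s)|^2]^{1/2}\le\beta^{*i}_{\eta,\lambda}(s-t)$, where $\beta_{\eta,\lambda}(\tau)=(1-[K_B]_2(\lambda))^{-1}e^{-(\eta+\lambda)\tau}K_A(\tau)$ and $\beta^{*i}_{\eta,\lambda}$ is the $i$-fold convolution, and then performs a single integration step (conditional Cauchy--Schwarz, tower property, Cauchy--Schwarz in $\rd s$, Young's inequality, and $\int_0^\infty\beta^{*i}_{\eta,\lambda}\le(\int_0^\infty\beta_{\eta,\lambda})^i$). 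Your induction instead lives at the level of the weighted operator norm of $\Gamma_i$, with the sub-multiplicativity encoded in the pointwise composition bound $\Gamma_{i+1}[h]\le\Gamma_i[\Gamma_1[h]]$ (legitimate, as you note, because $\Xi_i(t,r)$ is $\cF_r$-measurable and all terms are nonnegative, so Tonelli and the tower property apply); the analytic ingredients of your base case are identical to the paper's $i$-th step, but you only need them for $i=1$. What each buys: your operator formulation is more modular and avoids the convolution-power bookkeeping, while the paper's route yields genuinely pointwise-in-$(t,s)$ moment bounds on $\Xi_i$, which is slightly stronger information than the integrated estimate \eqref{lemm: linear: 2}. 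Your derivation of the well-definedness of $R$ from a strictly positive test process is also a correct (and slightly more explicit) rendering of the paper's ``in particular'' claim.
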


%% Proof

\begin{proof}
First, we prove the assertion (i). Observe that, for each $\Phi(\cdot,\cdot)\in\mathbb{B}$,
\begin{align*}
	\sum^d_{k=1}\bE\Bigl[\int^\infty_t|e^{-\lambda(s-t)}\Phi(t,s)B_k(t,s)|^2\rd s\Bigr]&\leq\|\Phi(\cdot,\cdot)\|^2_\mathbb{B}\int^\infty_te^{-2\lambda(s-t)}K_B(s-t)^2\rd s\\
	&=[K_B]_2(\lambda)^2\|\Phi(\cdot,\cdot)\|^2_\mathbb{B}<\infty
\end{align*}
for a.e.\ $t\geq0$. Thus the stochastic integral $\sum^d_{k=1}\int^\cdot_te^{-\lambda(r-t)}\Phi(t,r)B_k(t,r)\rd W_k(r)$ is well-defined for a.e.\ $t\geq0$. By Lemma~2.A in \cite{BeMi80}, there exists a jointly measurable version of the map
\begin{equation*}
	\Omega\times\Delta[0,\infty)\ni(\omega,t,s)\mapsto\sum^d_{k=1}\int^s_te^{-\lambda(r-t)}\Phi(t,r)B_k(t,r)\rd W_k(r)(\omega).
\end{equation*}
We define $\tilde{\Phi}(\cdot,\cdot)\in\mathbb{B}$ by
\begin{equation*}
	\tilde{\Phi}(t,s):=I_m+\sum^d_{k=1}\int^s_te^{-\lambda(r-t)}\Phi(t,r)B_k(t,r)\rd W_k(r),\ (t,s)\in\Delta[0,\infty).
\end{equation*}
For another $\Phi'(\cdot,\cdot)\in\mathbb{B}$, define $\tilde{\Phi}'(\cdot,\cdot)\in\mathbb{B}$ by the same manner. Then, for a.e.\ $t\geq0$,
\begin{align*}
	\sup_{s\in[t,\infty)}\bE\bigl[|\tilde{\Phi}(t,s)-\tilde{\Phi}'(t,s)|^2\bigr]&=\sum^d_{k=1}\bE\Bigl[\int^\infty_t|e^{-\lambda(s-t)}\bigl\{\Phi(t,s)-\Phi'(t,s)\bigr\}B_k(t,s)|^2\rd s\Bigr]\\
	&\leq [K_B]_2(\lambda)^2\|\Phi(\cdot,\cdot)-\Phi'(\cdot,\cdot)\|^2_\mathbb{B}.
\end{align*}
Thus, we obtain
\begin{equation*}
	\|\tilde{\Phi}(\cdot,\cdot)-\tilde{\Phi}'(\cdot,\cdot)\|_\mathbb{B}\leq[K_B]_2(\lambda)\|\Phi(\cdot,\cdot)-\Phi'(\cdot,\cdot)\|_\mathbb{B}.
\end{equation*}
Since $[K_B]_2(\lambda)<1$, the map $\Phi(\cdot,\cdot)\mapsto\tilde{\Phi}(\cdot,\cdot)$ is a contraction map on the Banach space $\mathbb{B}$. Thus, the map admits a unique fixed point in $\mathbb{B}$, which is the solution of \eqref{linear SDE}. Denote the unique solution by $\Phi(\cdot,\cdot)$. Clearly, for a.e.\ $t\in[0,\infty)$, $\Phi(t,\cdot)$ is a square integrable martingale, and thus the limit $\Phi(t,\infty):=\lim_{s\to\infty}\Phi(t,s)$ exists in $L^2_{\cF_\infty}(\Omega;\bR^{m\times m})$. Now we prove the estimate \eqref{lemm: linear: 1}. We note that, for each $\mathcal{A},\mathcal{B}\in\bR^{m\times m}$, $|\mathcal{A}|_\mathrm{op}\leq|\mathcal{A}|$ and $|\mathcal{A}\mathcal{B}|\leq|\mathcal{A}|_\mathrm{op}|\mathcal{B}|$ (recall that $|\cdot|$ denotes the Frobenius norm, and $|\cdot|_\mathrm{op}$ denotes the operator norm). By using these inequalities, we have, for a.e.\ $t\geq0$, a.s.,
\begin{align*}
	\sup_{s\in[t,\infty)}\bE_t\bigl[|\Phi(t,s)|^2_\mathrm{op}\bigr]^{1/2}&=\sup_{s\in[t,\infty)}\bE_t\Bigl[\Bigl|I_m+\sum^d_{k=1}\int^s_te^{-\lambda(r-t)}\Phi(t,r)B_k(t,r)\rd W_k(r)\Bigr|^2_\mathrm{op}\Bigr]^{1/2}\\
	&\leq1+\sup_{s\in[t,\infty)}\bE_t\Bigl[\Bigl|\sum^d_{k=1}\int^s_te^{-\lambda(r-t)}\Phi(t,r)B_k(t,r)\rd W_k(r)\Bigr|^2\Bigr]^{1/2}\\
	&=1+\sup_{s\in[t,\infty)}\bE_t\Bigl[\sum^d_{k=1}\int^s_te^{-2\lambda(r-t)}|\Phi(t,r)B_k(t,r)\bigr|^2\rd r\Bigr]^{1/2}\\
	&\leq1+\sup_{s\in[t,\infty)}\bE_t\Bigl[\int^s_te^{-2\lambda(r-t)}|\Phi(t,r)|^2_\mathrm{op}\sum^d_{k=1}|B_k(t,r)\bigr|^2\rd r\Bigr]^{1/2}\\
	&\leq1+\sup_{s\in[t,\infty)}\bE_t\bigl[|\Phi(t,s)|^2_\mathrm{op}\bigr]^{1/2}\Bigl(\int^\infty_te^{-2\lambda(r-t)}K_B(r-t)^2\rd r\Bigr)^{1/2}\\
	&=1+[K_B]_2(\lambda)\sup_{s\in[t,\infty)}\bE_t\bigl[|\Phi(t,s)|^2_\mathrm{op}\bigr]^{1/2}.
\end{align*}
Thus, the estimate \eqref{lemm: linear: 1} holds.

Next, we prove the assertion (ii). We show that, for any $i\in\bN$, the following estimate holds:
\begin{equation}\label{proof: linear: 1}
	e^{-(\eta+\lambda)(s-t)}\bE_t\bigl[|\Xi_i(t,s)|^2\bigr]^{1/2}\leq\beta^{*i}_{\eta,\lambda}(s-t)
\end{equation}
for a.e.\ $(t,s)\in\Delta[0,\infty)$, a.s., where
\begin{equation*}
	\beta_{\eta,\lambda}(\tau):=\frac{1}{1-[K_B]_2(\lambda)}e^{-(\eta+\lambda)\tau}K_A(\tau),\ \tau\geq0,
\end{equation*}
and $\beta^{*i}_{\eta,\lambda}$ denotes the $i$-times convolution of $\beta_{\eta,\lambda}$. For $i=1$, by using the estimate \eqref{lemm: linear: 1}, we see that
\begin{align*}
	e^{-(\eta+\lambda)(s-t)}\bE_t\bigl[|\Xi_1(t,s)|^2\bigr]^{1/2}&\leq e^{-(\eta+\lambda)(s-t)}\bE_t\bigl[|\Phi(t,s)|^2_\mathrm{op}|A(t,s)|^2\bigr]^{1/2}\\
	&\leq\bE_t\bigl[|\Phi(t,s)|^2_\mathrm{op}\bigr]^{1/2}e^{-(\eta+\lambda)(s-t)}K_A(s-t)\\
	&\leq\frac{1}{1-[K_B]_2(\lambda)}e^{-(\eta+\lambda)(s-t)}K_A(s-t)=\beta_{\eta,\lambda}(s-t)
\end{align*}
for a.e.\ $(t,s)\in\Delta[0,\infty)$, a.s. For a fixed $j\in\bN$, assume that \eqref{proof: linear: 1} holds for $i=j$. By using the (conditional) Minkowski's inequality, we have
\begin{align*}
	e^{-(\eta+\lambda)(s-t)}\bE_t\bigl[|\Xi_{j+1}(t,s)|^2\bigr]^{1/2}&=e^{-(\eta+\lambda)(s-t)}\bE_t\Bigl[\Bigl|\int^s_t\Xi_j(t,r)\Xi_1(r,s)\rd r\Bigr|^2\Bigr]^{1/2}\\
	&\leq e^{-(\eta+\lambda)(s-t)}\int^s_t\bE_t\bigl[|\Xi_j(t,r)\Xi_1(r,s)|^2\bigr]^{1/2}\rd r\\
	&\leq\int^s_te^{-(\eta+\lambda)(r-t)}\bE_t\Bigl[|\Xi_j(t,r)|^2e^{-2(\eta+\lambda)(s-r)}\bE_r\bigl[|\Xi_1(r,s)|^2\bigr]\Bigr]^{1/2}\rd r\\
	&\leq\int^s_te^{-(\eta+\lambda)(r-t)}\bE_t\bigl[|\Xi_j(t,r)|^2\bigr]^{1/2}\beta_{\eta,\lambda}(s-r)\rd r\\
	&\leq\int^s_t\beta^{*j}_{\eta,\lambda}(r-t)\beta_{\eta,\lambda}(s-r)\rd r=\beta^{*(j+1)}_{\eta,\lambda}(s-t)
\end{align*}
for a.e.\ $(t,s)\in\Delta[0,\infty)$, a.s. By the induction, the estimate \eqref{proof: linear: 1} holds for any $i\in\bN$.

Let $\zeta(\cdot)\in L^{2,\eta}_\bF(0,\infty;\bR^m)$ be fixed. For each $i\in\bN$, we have
\begin{align*}
	&\bE\Bigl[\int^\infty_0e^{2\eta t}\bE_t\Bigl[\int^\infty_te^{-\lambda(s-t)}|\Xi_i(t,s)|\,|\zeta(s)|\rd s\Bigr]^2\rd t\Bigr]^{1/2}\\
	&\leq\bE\Bigl[\int^\infty_0e^{2\eta t}\Bigl(\int^\infty_te^{-\lambda(s-t)}\bE_t\bigl[|\Xi_i(t,s)|^2\bigr]^{1/2}\bE_t\bigl[|\zeta(s)|^2\bigr]^{1/2}\rd s\Bigr)^2\rd t\Bigr]^{1/2}\\
	&=\bE\Bigl[\int^\infty_0\Bigl(\int^\infty_te^{-(\eta+\lambda)(s-t)}\bE_t\bigl[|\Xi_i(t,s)|^2\bigr]^{1/2}\bE_t\bigl[e^{2\eta s}|\zeta(s)|^2\bigr]^{1/2}\rd s\Bigr)^2\rd t\Bigr]^{1/2}\\
	&\leq\bE\Bigl[\int^\infty_0\Bigl(\int^\infty_t\beta^{*i}_{\eta,\lambda}(s-t)\bE_t\bigl[e^{2\eta s}|\zeta(s)|^2\bigr]^{1/2}\rd s\Bigr)^2\rd t\Bigr]^{1/2}\\
	&\leq\bE\Bigl[\int^\infty_0\int^\infty_t\beta^{*i}_{\eta,\lambda}(s-t)\rd s\,\int^\infty_t\beta^{*i}_{\eta,\lambda}(s-t)\bE_t\bigl[e^{2\eta s}|\zeta(s)|^2\bigr]\rd s\rd t\Bigr]^{1/2}\\
	&=\Bigl(\int^\infty_0\beta^{*i}_{\eta,\lambda}(t)\rd t\Bigr)^{1/2}\bE\Bigl[\int^\infty_0\int^\infty_t\beta^{*i}_{\eta,\lambda}(s-t)e^{2\eta s}|\zeta(s)|^2\rd s\rd t\Bigr]^{1/2}\\
	&\leq\int^\infty_0\beta^{*i}_{\eta,\lambda}(t)\rd t\,\bE\Bigl[\int^\infty_0e^{2\eta t}|\zeta(t)|^2\rd t\Bigr]^{1/2},
\end{align*}
where, in the first inequality, we used the Cauchy--Schwarz inequality for the conditional expectation $\bE_t[\cdot]$; in the second inequality, we used the estimate \eqref{proof: linear: 1}; in the third inequality, we used the Cauchy--Schwarz inequality for the integral $\int^\infty_t\cdots\rd s$; in the last inequality, we used Young's convolution inequality. Observe that
\begin{equation*}
	\int^\infty_0\beta^{*i}_{\eta,\lambda}(t)\rd t\leq\Bigl(\int^\infty_0\beta_{\eta,\lambda}(t)\rd t\Bigr)^i=\Bigl(\frac{[K_A]_1(\eta+\lambda)}{1-[K_B]_2(\lambda)}\Bigr)^i
\end{equation*}
for each $i\in\bN$. Since $(\eta,\lambda)\in\cR_{A,B}$ with $\cR_{A,B}$ defined by \eqref{domain: linear}, we see that $[K_A]_1(\eta+\lambda)/(1-[K_B]_2(\lambda))<1$. Thus the estimate \eqref{lemm: linear: 2} holds. This completes the proof.
\end{proof}

Now we provide a variation of constant formula for the adapted M-solution to infinite horizon linear Type-I BSVIE~\eqref{linear}.

%% Theorem

\begin{theo}\label{theo: explicit}
Suppose that Assumption~\ref{assum: linear} holds, and fix $(\eta,\lambda)\in\cR_{A,B}$. For a given $\psi(\cdot)\in L^{2,\eta}_{\cF_\infty}(0,\infty;\bR^m)$, let $(Y(\cdot),Z(\cdot,\cdot))\in\cM^{2,\eta}_\bF(0,\infty;\bR^m\times\bR^{m\times d})$ be the adapted M-solution of the infinite horizon linear Type-I BSVIE~\eqref{linear}. Then it holds that
\begin{equation*}
	Y(t)=\bE_t\Bigl[\Phi(t,\infty)\psi(t)+\int^\infty_te^{-\lambda(s-t)}R(t,s)\Phi(s,\infty)\psi(s)\rd s\Bigr]
\end{equation*}
for a.e.\ $t\geq0$, a.s.
\end{theo}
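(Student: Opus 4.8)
The plan is to fix $t\geq0$, reduce the study of \eqref{linear} to a single \emph{linear Volterra identity} for the diagonal process $Y(\cdot)$, and then resolve that identity by a Neumann series whose iterated kernels turn out to be exactly the $\Xi_i$. For fixed $t$, introduce the square-integrable martingale $P(t,s):=Y(t)+\int^s_tZ(t,r)\rd W(r)$, $s\geq t$; by \eqref{linear} its terminal value is $P(t,\infty)=\psi(t)+\int^\infty_te^{-\lambda(s-t)}\{A(t,s)Y(s)+\sum^d_{k=1}B_k(t,s)Z^k(t,s)\}\rd s$, and $P(t,t)=Y(t)$. Recall from Lemma~\ref{lemm: linear}(i) that $\Phi(t,\cdot)$ is also a square-integrable martingale, so that $\bE_s[\Phi(t,\infty)]=\Phi(t,s)$.

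The core step is to apply It\^o's product rule to $\Phi(t,s)P(t,s)$ on $[t,\infty)$. Since $\rd_s\Phi(t,s)=\sum^d_{k=1}e^{-\lambda(s-t)}\Phi(t,s)B_k(t,s)\rd W_k(s)$ and $\rd_sP(t,s)=Z(t,s)\rd W(s)$, a short cross-variation computation shows that the finite-variation part of $\Phi(t,s)P(t,s)$ is precisely $\int^\cdot_te^{-\lambda(s-t)}\Phi(t,s)\sum^d_{k=1}B_k(t,s)Z^k(t,s)\rd s$. Taking $\bE_t[\cdot]$ and discarding the (conditionally mean-zero) martingale part yields
\[
	\bE_t[\Phi(t,\infty)P(t,\infty)]=Y(t)+\bE_t\Bigl[\int^\infty_te^{-\lambda(s-t)}\Phi(t,s)\sum^d_{k=1}B_k(t,s)Z^k(t,s)\rd s\Bigr].
\]
On the other hand, I evaluate the left-hand side directly from the expression for $P(t,\infty)$: each integrand there is $\cF_s$-measurable, so the identity $\bE_t[\Phi(t,\infty)G(s)]=\bE_t[\bE_s[\Phi(t,\infty)]G(s)]=\bE_t[\Phi(t,s)G(s)]$ lets me pull $\Phi(t,\infty)$ inside the $s$-integral, turning it into $\Phi(t,s)$. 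Comparing the two expressions, the $B$-terms cancel and I am left with the linear Volterra identity
\[
	Y(t)=\bE_t[\Phi(t,\infty)\psi(t)]+\bE_t\Bigl[\int^\infty_te^{-\lambda(s-t)}\Xi_1(t,s)Y(s)\rd s\Bigr],
\]
where $\Xi_1=\Phi A$.

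It then remains to solve this identity by iteration. Writing it as $Y=\Psi+\mathcal{K}Y$ with $\Psi(t):=\bE_t[\Phi(t,\infty)\psi(t)]$ and $(\mathcal{K}V)(t):=\bE_t[\int^\infty_te^{-\lambda(s-t)}\Xi_1(t,s)V(s)\rd s]$, I would prove by induction that $(\mathcal{K}^i\Psi)(t)=\bE_t[\int^\infty_te^{-\lambda(s-t)}\Xi_i(t,s)\Psi(s)\rd s]$ for $i\geq1$. The inductive step uses the tower property (to absorb the inner conditional expectation against the $\cF_s$-measurable factor $\Xi_1(t,s)$), Fubini's theorem, and the associativity of the convolution $(\zeta\star\xi)(t,s):=\int^s_t\zeta(t,r)\xi(r,s)\rd r$, which gives $\Xi_1\star\Xi_i=\Xi_{i+1}$. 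Summing the Neumann series, and using the tower property once more to replace $\Psi(s)=\bE_s[\Phi(s,\infty)\psi(s)]$ by $\Phi(s,\infty)\psi(s)$ under $\bE_t[\Xi_i(t,s)\,\cdot\,]$, produces exactly $Y(t)=\bE_t[\Phi(t,\infty)\psi(t)]+\bE_t[\int^\infty_te^{-\lambda(s-t)}R(t,s)\Phi(s,\infty)\psi(s)\rd s]$. The convergence of the series, the finiteness of $R$, and all interchanges of summation, expectation and integration are controlled by the geometric estimate \eqref{lemm: linear: 2} of Lemma~\ref{lemm: linear}(ii), valid precisely because $(\eta,\lambda)\in\cR_{A,B}$.

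The main obstacle I anticipate is not the algebra above but its rigorous justification on the infinite horizon with possibly unbounded coefficients. Concretely, I must guarantee that the local-martingale part of $\Phi(t,s)P(t,s)$ is a genuine, uniformly integrable martingale on $[t,\infty)$, so that its $\bE_t$-conditional expectation vanishes and the limit $s\to\infty$ can be taken inside $\bE_t$; this amounts to the square-integrability of $\int^\infty_t\Phi(t,s)Z(t,s)\rd W(s)$ and of $\int^\infty_t(\rd_s\Phi(t,s))P(t,s)$, which I would extract from the conditional bound \eqref{lemm: linear: 1}, the membership $\Phi\in\mathbb{B}$, and the a priori integrability of $(Y(\cdot),Z(\cdot,\cdot))\in\cM^{2,\eta}_\bF(0,\infty;\bR^m\times\bR^{m\times d})$. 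Should a direct estimate prove awkward owing to the unboundedness, I would instead first establish the formula for truncated, bounded coefficients on a finite horizon, where the stochastic calculus is unproblematic, and then pass to the limit by combining the stability estimate \eqref{prop: a priori estimate BSVIE: 2} with the finite-horizon approximation of Theorem~\ref{theo: finite horizon}.
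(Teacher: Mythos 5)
Your proposal is correct and takes essentially the same route as the paper: Itô's product rule involving $\Phi(t,\cdot)$ produces the key identity $Y(t)=\bE_t\bigl[\Phi(t,\infty)\psi(t)+\int^\infty_te^{-\lambda(s-t)}\Xi_1(t,s)Y(s)\rd s\bigr]$, which is then resolved by iteration into the Neumann series $R=\sum_{i\geq1}\Xi_i$, with the convergence, the vanishing of the remainder term, and the final tower-property substitution $\bE_s[\Phi(s,\infty)\psi(s)]\mapsto\Phi(s,\infty)\psi(s)$ all controlled by Lemma~\ref{lemm: linear}, exactly as in the paper. The only cosmetic difference is that the paper applies Itô's formula to $\Phi(t,\cdot)Y(t,\cdot)$ with $Y(t,s):=\bE_s[\psi(t)+\int^\infty_se^{-\lambda(r-t)}\{A(t,r)Y(r)+\sum_kB_k(t,r)Z^k(t,r)\}\rd r]$, so the $B_kZ^k$ terms cancel inside the Itô computation, whereas you apply it to the pure martingale $P(t,\cdot)$ and cancel them after conditioning via $\bE_s[\Phi(t,\infty)]=\Phi(t,s)$; the uniform-integrability argument you outline (Doob bounds on $\sup_s|\Phi(t,s)|$ and $\sup_s|P(t,s)|$ combined with the $L^2$-bounds on $Z(t,\cdot)$ and $K_B$) is precisely the one the paper carries out.
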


%% Proof

\begin{proof}
Define
\begin{equation*}
	Y(t,s):=\bE_s\Bigl[\psi(t)+\int^\infty_se^{-\lambda(r-t)}\Bigl\{A(t,r)Y(r)+\sum^d_{k=1}B_k(t,r)Z^k(t,r)\Bigr\}\rd r\Bigr]
\end{equation*}
for $(t,s)\in\Delta[0,\infty)$. For each $(t,s)\in\Delta[0,\infty)$, by taking the conditional expectation $\bE_s[\cdot]$ on both sides of \eqref{linear}, we have
\begin{align*}
	Y(t)&=\bE_s\Bigl[\psi(t)+\int^\infty_te^{-\lambda(r-t)}\Bigl\{A(t,r)Y(r)+\sum^d_{k=1}B_k(t,r)Z^k(t,r)\Bigr\}\rd r-\int^\infty_tZ(t,r)\rd W(r)\Bigr]\\
	&=Y(t,s)+\int^s_te^{-\lambda(r-t)}\Bigl\{A(t,r)Y(r)+\sum^d_{k=1}B_k(t,r)Z^k(t,r)\Bigr\}\rd r-\int^s_tZ(t,r)\rd W(r),
\end{align*}
and hence
\begin{equation*}
	Y(t,s)=Y(t)-\int^s_te^{-\lambda(r-t)}\Bigl\{A(t,r)Y(r)+\sum^d_{k=1}B_k(t,r)Z^k(t,r)\Bigr\}\rd r+\int^s_tZ(t,r)\rd W(r).
\end{equation*}
For a.e.\ $t\geq0$, by using It\^{o}'s formula for the product $\Phi(t,\cdot)Y(t,\cdot)$, we obtain
\begin{equation}\label{proof: explicit: 1}
\begin{split}
	&\Phi(t,s)Y(t,s)=Y(t)-\int^s_te^{-\lambda(r-t)}\Phi(t,r)A(t,r)Y(r)\rd r\\
	&\hspace{3cm}+\sum^d_{k=1}\int^s_t\bigl\{\Phi(t,r)Z^k(t,r)+e^{-\lambda(r-t)}\Phi(t,r)B_k(t,r)Y(t,r)\bigr\}\rd W_k(r)
\end{split}
\end{equation}
for any $s\geq t$, a.s. Note that the left-hand side tends to $\Phi(t,\infty)\psi(t)$ in $L^1_{\cF_\infty}(\Omega;\bR^m)$ as $s\to\infty$. Furthermore, noting that $\bE\bigl[\sup_{s\in[t,\infty)}|\Phi(t,s)|^2\bigr]<\infty$ and $\bE\bigl[\sup_{s\in[t,\infty)}|Y(t,s)|^2\bigr]<\infty$, we have
\begin{equation*}
	\bE\Bigl[\Bigl(\sum^d_{k=1}\int^\infty_t|\Phi(t,s)Z^k(t,s)|^2\rd s\Bigr)^{1/2}\Bigr]\leq\bE\Bigl[\sup_{s\in[t,\infty)}|\Phi(t,s)|^2\Bigr]^{1/2}\bE\Bigl[\int^\infty_t|Z(t,s)|^2\rd s\Bigr]^{1/2}<\infty
\end{equation*}
and
\begin{align*}
	&\bE\Bigl[\Bigl(\sum^d_{k=1}\int^\infty_t\bigl|e^{-\lambda(s-t)}\Phi(t,s)B_k(t,s)Y(t,s)\bigr|^2\rd s\Bigr)^{1/2}\Bigr]\\
	&\leq\bE\Bigl[\Bigl(\int^\infty_t|\Phi(t,s)|^2|Y(t,s)|^2e^{-2\lambda(s-t)}K_B(s-t)^2\rd s\Bigr)^{1/2}\Bigr]\\
	&\leq\bE\Bigl[\sup_{s\in[t,\infty)}|\Phi(t,s)|^2\Bigr]^{1/2}\bE\Bigl[\sup_{s\in[t,\infty)}|Y(t,s)|^2\Bigr]^{1/2}\Bigl(\int^\infty_te^{-2\lambda(s-t)}K_B(s-t)^2\rd s\Bigr)^{1/2}<\infty.
\end{align*}
Thus, the stochastic integral in the right-hand side of \eqref{proof: explicit: 1} is a uniformly integrable martingale (with respect to the time parameter $s\in[t,\infty)$). By letting $s\to\infty$ and taking the conditional expectation $\bE_t[\cdot]$ on both sides of \eqref{proof: explicit: 1}, we obtain
\begin{equation*}
	\bE_t\bigl[\Phi(t,\infty)\psi(t)\bigr]=Y(t)-\bE_t\Bigl[\int^\infty_te^{-\lambda(s-t)}\Phi(t,s)A(t,s)Y(s)\rd s\Bigr].
\end{equation*}
Hence, by using the notation $\Xi_1(t,s):=\Phi(t,s)A(t,s)$,
\begin{equation}\label{proof: explicit: 2}
	Y(t)=\bE_t\Bigl[\Phi(t,\infty)\psi(t)+\int^\infty_te^{-\lambda(s-t)}\Xi_1(t,s)Y(s)\rd s\Bigr]
\end{equation}
for a.e.\ $t\geq0$, a.s.

We shall show that, for each $N\in\bN$,
\begin{equation}\label{proof: explicit: 3}
	Y(t)=\bE_t\Bigl[\Phi(t,\infty)\psi(t)+\int^\infty_te^{-\lambda(s-t)}R_N(t,s)\bE_s\bigl[\Phi(s,\infty)\psi(s)\bigr]\rd s+\int^\infty_te^{-\lambda(s-t)}\Xi_{N+1}(t,s)Y(s)\rd s\Bigr]
\end{equation}
for a.e.\ $t\geq0$, a.s., where $R_N(t,s):=\sum^N_{i=1}\Xi_i(t,s)$ for $(t,s)\in\Delta[0,\infty)$. By using \eqref{proof: explicit: 2} and Fubini's theorem, we have
\begin{align*}
	Y(t)&=\bE_t\Bigl[\Phi(t,\infty)\psi(t)+\int^\infty_te^{-\lambda(s-t)}\Xi_1(t,s)\bE_s\bigl[\Phi(s,\infty)\psi(s)\bigr]\rd s\\
	&\hspace{2cm}+\int^\infty_te^{-\lambda(s-t)}\Xi_1(t,s)\bE_s\Bigl[\int^\infty_se^{-\lambda(r-s)}\Xi_1(s,r)Y(r)\rd r\Bigr]\rd s\Bigr]\\
	&=\bE_t\Bigl[\Phi(t,\infty)\psi(t)+\int^\infty_te^{-\lambda(s-t)}R_1(t,s)\bE_s\bigl[\Phi(s,\infty)\psi(s)\bigr]\rd s\\
	&\hspace{2cm}+\int^\infty_t\Xi_1(t,s)\int^\infty_se^{-\lambda(r-t)}\Xi_1(s,r)Y(r)\rd r\rd s\Bigr]\\
	&=\bE_t\Bigl[\Phi(t,\infty)\psi(t)+\int^\infty_te^{-\lambda(s-t)}R_1(t,s)\bE_s\bigl[\Phi(s,\infty)\psi(s)]\rd s+\int^\infty_te^{-\lambda(s-t)}\Xi_2(t,s)Y(s)\rd s\Bigr]
\end{align*}
for a.e.\ $t\geq0$, a.s. Thus the equality \eqref{proof: explicit: 3} holds for $N=1$. Let $M\in\bN$ be fixed, and assume that \eqref{proof: explicit: 3} holds for $N=M$. Again by using \eqref{proof: explicit: 2} and Fubini's theorem, we have
\begin{align*}
	Y(t)&=\bE_t\Bigl[\Phi(t,\infty)\psi(t)+\int^\infty_te^{-\lambda(s-t)}R_M(t,s)\bE_s\bigl[\Phi(s,\infty)\psi(s)\bigr]\rd s\\
	&\hspace{2cm}+\int^\infty_te^{-\lambda(s-t)}\Xi_{M+1}(t,s)\bE_s\bigl[\Phi(s,\infty)\psi(s)\bigr]\rd s\\
	&\hspace{2cm}+\int^\infty_te^{-\lambda(s-t)}\Xi_{M+1}(t,s)\bE_s\Bigl[\int^\infty_se^{-\lambda(r-s)}\Xi_1(s,r)Y(r)\rd r\Bigr]\rd s\Bigr]\\
	&=\bE_t\Bigl[\Phi(t,\infty)\psi(t)+\int^\infty_te^{-\lambda(s-t)}R_{M+1}(t,s)\bE_s\bigl[\Phi(s,\infty)\psi(s)\bigr]\rd s\\
	&\hspace{2cm}+\int^\infty_t\Xi_{M+1}(t,s)\int^\infty_se^{-\lambda(r-t)}\Xi_1(s,r)Y(r)\rd r\rd s\Bigr]\\
	&=\bE_t\Bigl[\Phi(t,\infty)\psi(t)+\int^\infty_te^{-\lambda(s-t)}R_{M+1}(t,s)\bE_s\bigl[\Phi(s,\infty)\psi(s)\bigr]\rd s+\int^\infty_te^{-\lambda(s-t)}\Xi_{M+2}(t,s)Y(s)\rd s\Bigr]
\end{align*}
for a.e.\ $t\geq0$, a.s. Thus, the equality \eqref{proof: explicit: 3} holds for $N=M+1$. By the induction, we see that \eqref{proof: explicit: 3} holds for any $N\in\bN$.

Noting that the process $Y(\cdot)$ is in $L^{2,\eta}_\bF(0,\infty;\bR^m)$, by Lemma~\ref{lemm: linear} (ii), the process
\begin{equation*}
	\Bigl(\bE_t\Bigl[\int^\infty_te^{-\lambda(s-t)}\Xi_{N+1}(t,s)Y(s)\rd s\Bigr]\Bigr)_{t\geq0}
\end{equation*}
converges to zero in $L^{2,\eta}_\bF(0,\infty;\bR^m)$ as $N\to\infty$. On the other hand, by the estimate \eqref{lemm: linear: 1},
\begin{align*}
	\bE\Bigl[\int^\infty_0e^{2\eta t}\bigl|\bE_t\bigl[\Phi(t,\infty)\psi(t)\bigr]\bigr|^2\rd t\Bigr]^{1/2}&\leq\bE\Bigl[\int^\infty_0e^{2\eta t}\bE_t\bigl[|\Phi(t,\infty)|^2_\mathrm{op}\bigr]|\psi(t)|^2\rd t\Bigr]^{1/2}\\
	&\leq \frac{1}{1-[K_B]_2(\lambda)}\bE\Bigl[\int^\infty_0e^{2\eta t}|\psi(t)|^2\rd t\Bigr]^{1/2}<\infty.
\end{align*}
This implies that the process $(\bE_t[\Phi(t,\infty)\psi(t)])_{t\geq0}$ is in $L^{2,\eta}_\bF(0,\infty;\bR^m)$, and thus the process
\begin{equation*}
	\Bigl(\bE_t\Bigl[\int^\infty_te^{-\lambda(s-t)}R_N(t,s)\bE_s\bigl[\Phi(s,\infty)\psi(s)\bigr]\rd s\Bigr]\Bigr)_{t\geq0}
\end{equation*}
converges to
\begin{equation*}
	\Bigl(\bE_t\Bigl[\int^\infty_te^{-\lambda(s-t)}R(t,s)\bE_s\bigl[\Phi(s,\infty)\psi(s)\bigr]\rd s\Bigr]\Bigr)_{t\geq0}
\end{equation*}
in $L^{2,\eta}_\bF(0,\infty;\bR^m)$ as $N\to\infty$. Consequently, it holds that
\begin{align*}
	Y(t)&=\bE_t\Bigl[\Phi(t,\infty)\psi(t)+\int^\infty_te^{-\lambda(s-t)}R(t,s)\bE_s\bigl[\Phi(s,\infty)\psi(s)\bigr]\rd s\Bigr]\\
	&=\bE_t\Bigl[\Phi(t,\infty)\psi(t)+\int^\infty_te^{-\lambda(s-t)}R(t,s)\Phi(s,\infty)\psi(s)\rd s\Bigr]
\end{align*}
for a.e.\ $t\geq0$, a.s. This completes the proof.
\end{proof}

%% Remark

\begin{rem}
The process $R(\cdot,\cdot)$ can be seen as a \emph{resolvent} of the kernel $\Xi_1(\cdot,\cdot)=\Phi(\cdot,\cdot)A(\cdot,\cdot)$. Indeed, the following holds:
\begin{equation*}
	R(t,s)=\Xi_1(t,s)+\int^s_tR(t,r)\Xi_1(r,s)\rd r
\end{equation*}
for a.e.\ $(t,s)\in\Delta[0,\infty)$, a.s. Theorem~\ref{theo: explicit} is an extension of the results of Hu and {\O}ksendal~\cite{HuOk19} and  Wang, Yong, and Zhang~\cite{WaYoZh20} to the infinite horizon and unbounded coefficients setting.
\end{rem}

%%%%

\subsection{A duality principle}

%%%%

In this subsection, we establish a \emph{duality principle} between a linear SVIE
\begin{equation}\label{duality SVIE}
	X(t)=\varphi(t)+\int^t_0C(t,s)X(s)\rd s+\sum^d_{k=1}\int^t_0D_k(t,s)X(s)\rd W_k(s),\ t\geq0,
\end{equation}
and an infinite horizon linear Type-II BSVIE 
\begin{equation}\label{duality BSVIE}
	Y(t)=\psi(t)+\int^\infty_te^{-\lambda(s-t)}\Bigl\{C(s,t)^\top Y(s)+\sum^d_{k=1}D_k(s,t)^\top Z^k(s,t)\Bigr\}\rd s-\int^\infty_tZ(t,s)\rd W(s),\ t\geq0,
\end{equation}
in a weighted $L^2$-space. We impose the following assumptions on the coefficients $C,D_k$, $k=1,\dots,d$:

\begin{assum}\label{assum: duality}
$C,D_k:\Omega\times\Delta^\comp[0,\infty)\to\bR^{n\times n}$, $k=1,\dots,d$, are measurable; $C(t,\cdot)$ and $D_k(t,\cdot)$, $k=1,\dots,d$, are adapted for each $t\geq0$; there exist $K_C\in L^{1,*}(0,\infty;\bR_+)$ and $K_D\in L^{2,*}(0,\infty;\bR_+)$ such that
\begin{equation*}
	|C(t,s)|\leq K_C(t-s)\ \text{and}\ \Bigl(\sum^d_{k=1}|D_k(t,s)|^2\Bigr)^{1/2}\leq K_D(t-s)
\end{equation*}
for a.e.\ $(t,s)\in\Delta^\comp[0,\infty)$, a.s.
\end{assum}

As before, we define
\begin{equation}\label{domain: duality}
	\rho_{C,D}:=\inf\{\rho\in\bR\,|\,[K_C]_1(\rho)+[K_D]_2(\rho)\leq1\}.
\end{equation}

%% Remark

\begin{rem}\label{rem: duality}
\begin{itemize}
\item[(i)]
By Proposition~\ref{prop: SVIE}, for any $\varphi(\cdot)\in L^{2,-\mu}_\bF(0,\infty;\bR^n)$ with $\mu>\rho_{C,D}$, the (forward) SVIE~\eqref{duality SVIE} admits a unique solution $X(\cdot)\in L^{2,-\mu}_\bF(0,\infty;\bR^n)$. By the uniqueness of the solution, we see that the map
\begin{equation*}
	L^{2,-\mu}_\bF(0,\infty;\bR^n)\ni\varphi(\cdot)\mapsto X(\cdot)\in L^{2,-\mu}_\bF(0,\infty;\bR^n)
\end{equation*}
is linear.
\item[(ii)]
By Theorem~\ref{theo: well-posedness BSVIE}, for any $\psi(\cdot)\in L^{2,\eta}_{\cF_\infty}(0,\infty;\bR^n)$ with $\eta+\lambda>\rho_{C,D}$, the infinite horizon Type-II BSVIE~\eqref{duality BSVIE} admits a unique adapted M-solution $(Y(\cdot),Z(\cdot,\cdot))\in\cM^{2,\eta}_\bF(0,\infty;\bR^n\times\bR^{n\times d})$. By the uniqueness of the solution, we see that the map
\begin{equation*}
	L^{2,\eta}_{\cF_\infty}(0,\infty;\bR^n)\ni\psi(\cdot)\mapsto(Y(\cdot),Z(\cdot,\cdot))\in\cM^{2,\eta}_\bF(0,\infty;\bR^n\times\bR^{n\times d})
\end{equation*}
is linear.
\item[(iii)]
For each $\mu,\eta,\lambda\in\bR$ with $\eta+\lambda\geq\mu$ and given processes $\theta_\mu(\cdot)\in L^{2,-\mu}_{\cF_\infty}(0,\infty;\bR^n)$ and $\theta_\eta(\cdot)\in L^{2,\eta}_{\cF_\infty}(0,\infty;\bR^n)$, it holds that
\begin{align*}
	\bE\Bigl[\int^\infty_0e^{-\lambda t}|\theta_\mu(t)|\,|\theta_\eta(t)|\rd t\Bigr]&\leq\bE\Bigl[\int^\infty_0e^{-\mu t}|\theta_\mu(t)|e^{\eta t}|\theta_\eta(t)|\rd t\Bigr]\\
	&\leq\bE\Bigl[\int^\infty_0e^{-2\mu t}|\theta_\mu(t)|^2\rd t\Bigr]^{1/2}\bE\Bigl[\int^\infty_0e^{2\eta t}|\theta_\eta(t)|^2\rd t\Bigr]^{1/2}<\infty.
\end{align*}
This implies that the process $(\langle \theta_\mu(t),\theta_\eta(t)\rangle)_{t\geq0}$ is in $L^{1,-\lambda}_{\cF_\infty}(0,\infty;\bR)$, where $\langle\cdot,\cdot\rangle$ denotes the usual inner product in $\bR^n$.
\end{itemize}
\end{rem}

Now we show the duality principle. This is an extension of Theorem~5.1 in \cite{Yo08} to the infinite horizon (and unbounded coefficients) case.

%% Theorem

\begin{theo}\label{theo: duality}
Suppose that Assumption~\ref{assum: duality} holds, and let $\mu,\eta,\lambda\in\bR$ satisfy $\eta+\lambda\geq\mu>\rho_{C,D}$. For given $\varphi(\cdot)\in L^{2,-\mu}_\bF(0,\infty;\bR^n)$ and $\psi(\cdot)\in L^{2,\eta}_{\cF_\infty}(0,\infty;\bR^n)$, let $X(\cdot)\in L^{2,-\mu}_\bF(0,\infty;\bR^n)$ be the solution of SVIE~\eqref{duality SVIE}, and let $(Y(\cdot),Z(\cdot,\cdot))\in\cM^{2,\eta}_\bF(0,\infty;\bR^n\times\bR^{n\times d})$ be the adapted M-solution of infinite horizon BSVIE~\eqref{duality BSVIE}. Then it holds that
\begin{equation}\label{theo: duality: 1}
	\bE\Bigl[\int^\infty_0e^{-\lambda t}\langle\psi(t),X(t)\rangle\rd t\Bigr]=\bE\Bigl[\int^\infty_0e^{-\lambda t}\langle Y(t),\varphi(t)\rangle\rd t\Bigr].
\end{equation}
\end{theo}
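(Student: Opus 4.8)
The plan is to substitute the defining equations for $\psi(\cdot)$ and $\varphi(\cdot)$ into the two sides of \eqref{theo: duality: 1} and to check that the resulting terms coincide after interchanging the order of integration. From BSVIE~\eqref{duality BSVIE} I solve for $\psi(t)$ and from SVIE~\eqref{duality SVIE} I solve for $\varphi(t)$, and insert these into the left- and right-hand sides of \eqref{theo: duality: 1}. Both sides then produce a common term $\bE[\int^\infty_0 e^{-\lambda t}\langle Y(t),X(t)\rangle\rd t]$, which cancels. What remains is to match, separately, the drift contributions coming from the kernels $C$ and the diffusion contributions coming from the kernels $D_k$ and the martingale integrals. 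Remark~\ref{rem: duality}~(iii), applied with the weights $\mu$ and $\eta$ using $\eta+\lambda\geq\mu$, together with the kernel bounds of Assumption~\ref{assum: duality} and the finiteness of the $\cM^{2,\eta}_\bF$- and $L^{2,-\mu}_\bF$-norms, guarantees that each of these pieces is separately integrable, so that the splitting and the manipulations below are legitimate.

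For the drift terms I would use Fubini's theorem. The left-hand side contributes $\bE[\int^\infty_0 e^{-\lambda t}\langle\int^\infty_t e^{-\lambda(s-t)}C(s,t)^\top Y(s)\rd s,X(t)\rangle\rd t]$, which equals $\bE[\int^\infty_0\int^\infty_t e^{-\lambda s}\langle Y(s),C(s,t)X(t)\rangle\rd s\rd t]$ after moving the transpose across the inner product. Swapping the order of integration over the region $\{0\leq t\leq s\}$ and relabelling the variables turns this into $\bE[\int^\infty_0 e^{-\lambda t}\langle Y(t),\int^t_0 C(t,s)X(s)\rd s\rangle\rd t]$, which is exactly the drift term produced by the right-hand side. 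An identical computation matches the two $D_k$-drift contributions once they are written using $\langle D_k(s,t)^\top Z^k(s,t),X(t)\rangle=\langle Z^k(s,t),D_k(s,t)X(t)\rangle$.

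The heart of the argument is the treatment of the two stochastic integrals, and this is where the adapted M-solution structure is used. The left-hand side carries the extra term $\bE[\int^\infty_0 e^{-\lambda t}\langle\int^\infty_t Z(t,s)\rd W(s),X(t)\rangle\rd t]$; since $X(t)$ is $\cF_t$-measurable while $\bE_t[\int^\infty_t Z(t,s)\rd W(s)]=0$, the tower property shows that the integrand has zero expectation for a.e.\ $t$, so this term vanishes entirely. The right-hand side carries the term $-\bE[\int^\infty_0 e^{-\lambda t}\langle Y(t),\sum^d_{k=1}\int^t_0 D_k(t,s)X(s)\rd W_k(s)\rangle\rd t]$. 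Here I would invoke the defining relation $Y(t)=\bE[Y(t)]+\int^t_0 Z(t,s)\rd W(s)$ of the space $\cM^{2,\eta}_\bF$: the deterministic part $\bE[Y(t)]$ pairs with a zero-mean martingale increment and drops out, and It\^o's isometry (the cross-variation between $\int^t_0 Z(t,s)\rd W(s)$ and $\sum^d_{k=1}\int^t_0 D_k(t,s)X(s)\rd W_k(s)$) converts this term into $-\bE[\int^\infty_0 e^{-\lambda t}\int^t_0\sum^d_{k=1}\langle Z^k(t,s),D_k(t,s)X(s)\rangle\rd s\rd t]$. Applying Fubini to the remaining $D_k$-term from the left-hand side, exactly as in the drift case, produces the same expression, and the two cancel, which finishes the identity.

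I expect the main obstacle to be the bookkeeping of integrability rather than any conceptual difficulty: one must verify that each of the pieces obtained after substitution lies in $L^1(\Omega\times[0,\infty))$, so that Fubini, the splitting of the inner products, the passage of the conditional expectation through the pairing, and the identification of the martingale cross-variation are all justified. All of this reduces to the Cauchy--Schwarz inequality in the weighted norms combined with the estimates of Assumption~\ref{assum: duality}; the condition $\eta+\lambda\geq\mu>\rho_{C,D}$ is precisely what makes these weighted pairings finite. The conceptually essential observation remains that the ``future'' martingale term $\int^\infty_t Z(t,s)\rd W(s)$ contributes nothing, whereas the ``past'' martingale term is recovered through the M-solution constraint on $Z(t,s)$ for $(t,s)\in\Delta^\comp[0,\infty)$.
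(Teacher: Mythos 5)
Your proposal is correct and uses exactly the same ingredients as the paper's proof: Fubini's theorem over $\Delta[0,\infty)$ for the $C$- and $D_k$-drift terms, the M-solution relation $Y(t)=\bE[Y(t)]+\int^t_0Z(t,s)\rd W(s)$ combined with the It\^o cross-isometry for the backward-looking stochastic integral, and the vanishing of $\bE_t\bigl[\int^\infty_tZ(t,s)\rd W(s)\bigr]$ paired against the adapted process $X(t)$. The only difference is organizational --- you expand both sides and cancel the common term $\bE\bigl[\int^\infty_0e^{-\lambda t}\langle Y(t),X(t)\rangle\rd t\bigr]$, whereas the paper transforms the right-hand side into the left-hand side in one chain of equalities, absorbing the future-martingale step into the conditional-expectation form of the BSVIE at the end; this is the same argument in a different order.
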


%% Proof

\begin{proof}
By the (forward) SVIE~\eqref{duality SVIE}, the definition of adapted M-solutions, and Fubini's theorem, we have
\begin{align*}
	&\bE\Bigl[\int^\infty_0e^{-\lambda t}\langle Y(t),\varphi(t)\rangle\rd t\Bigr]\\
	&=\bE\Bigl[\int^\infty_0e^{-\lambda t}\langle Y(t),X(t)\rangle\rd t-\int^\infty_0e^{-\lambda t}\Bigl\langle Y(t),\int^t_0C(t,s)X(s)\rd s\Bigr\rangle\rd t\\
	&\hspace{2cm}-\int^\infty_0e^{-\lambda t}\Bigl\langle\bE\bigl[Y(t)\bigr]+\sum^d_{k=1}\int^t_0Z^k(t,s)\rd W_k(s),\sum^d_{k=1}\int^t_0D_k(t,s)X(s)\rd W_k(s)\Bigr\rangle\rd t\Bigr]\\
	&=\bE\Bigl[\int^\infty_0e^{-\lambda t}\langle Y(t),X(t)\rangle\rd t-\int^\infty_0e^{-\lambda t}\int^t_0\bigl\langle C(t,s)^\top Y(t), X(s)\bigr\rangle\rd s\rd t\\
	&\hspace{2cm}-\int^\infty_0e^{-\lambda t}\int^t_0\Bigl\langle\sum^d_{k=1}D_k(t,s)^\top Z^k(t,s),X(s)\Bigr\rangle\rd s\rd t\Bigr]\\
	&=\bE\Bigl[\int^\infty_0e^{-\lambda t}\langle Y(t),X(t)\rangle\rd t-\int^\infty_0\Bigl\langle\int^\infty_te^{-\lambda s}C(s,t)^\top Y(s)\rd s,X(t)\Bigr\rangle\rd t\\
	&\hspace{2cm}-\int^\infty_0\Bigl\langle\int^\infty_te^{-\lambda s}\sum^d_{k=1}D_k(s,t)^\top Z^k(s,t)\rd s,X(t)\Bigr\rangle\rd t\Bigr]\\
	&=\bE\Bigl[\int^\infty_0e^{-\lambda t}\Bigl\langle Y(t)-\int^\infty_te^{-\lambda(s-t)}\Bigl\{C(s,t)^\top Y(s)+\sum^d_{k=1}D_k(s,t)^\top Z^k(s,t)\Bigr\}\rd s,X(t)\Bigr\rangle\rd t\Bigr].
\end{align*}
On the other hand, by the infinite horizon BSVIE~\eqref{duality BSVIE},
\begin{equation*}
	Y(t)=\bE_t\Bigl[\psi(t)+\int^\infty_te^{-\lambda(s-t)}\Bigl\{C(s,t)^\top Y(s)+\sum^d_{k=1}D_k(s,t)^\top Z^k(s,t)\Bigr\}\rd s\Bigr]
\end{equation*}
for a.e.\ $t\geq0$, a.s. Noting that $X(\cdot)$ is adapted, we see that
\begin{align*}
	&\bE\Bigl[\int^\infty_0e^{-\lambda t}\Bigl\langle Y(t)-\int^\infty_te^{-\lambda(s-t)}\Bigl\{C(s,t)^\top Y(s)+\sum^d_{k=1}D_k(s,t)^\top Z^k(s,t)\Bigr\}\rd s,X(t)\Bigr\rangle\rd t\Bigr]\\
	&=\bE\Bigl[\int^\infty_0e^{-\lambda t}\langle\psi(t),X(t)\rangle\rd t\Bigr].
\end{align*}
Thus, the equality \eqref{theo: duality: 1} holds.
\end{proof}

In the end of this section, we show the following abstract lemma, which is useful to study some relationships between adapted M-solutions of infinite horizon BSVIEs and various types of infinite horizon BSDEs.

%% Lemma

\begin{lemm}\label{lemm: BSVIE BSDE}
Suppose that $\mu,\lambda\in\bR$ satisfy $\lambda\geq2\mu>0$, and let $(Y(\cdot),Z(\cdot,\cdot))\in\cM^{2,-\mu}_\bF(0,\infty;\bR^m\times\bR^{m\times d})$ be fixed. Define a pair of processes $(\cY(\cdot),\cZ(\cdot))$ by
\begin{equation}\label{lemm: BSVIE BSDE: 1}
	\cY(t):=\bE_t\Bigl[\int^\infty_te^{-\lambda(s-t)}Y(s)\rd s\Bigr]\ \text{and}\ \cZ(t):=\int^\infty_te^{-\lambda(s-t)}Z(s,t)\rd s
\end{equation}
for $t\geq0$. Then $(\cY(\cdot),\cZ(\cdot))$ is in $L^{2,-\mu}_\bF(0,\infty;\bR^m)\times L^{2,-\mu}_\bF(0,\infty;\bR^{m\times d})$, $\cY(\cdot)$ is continuous a.s., and it holds that
\begin{equation}\label{lemm: BSVIE BSDE: 2}
	\cY(t)=\cY(T)+\int^T_t\{Y(s)-\lambda\cY(s)\}\rd s-\int^T_t\cZ(s)\rd W(s)
\end{equation}
for any $(t,T)\in\Delta[0,\infty)$, a.s. Conversely, if a pair $(\cY'(\cdot),\cZ'(\cdot))\in L^{2,-\mu}_\bF(0,\infty;\bR^m)\times L^{2,-\mu}_\bF(0,\infty;\bR^{m\times d})$ with $\cY'(\cdot)$ being continuous a.s. satisfies \eqref{lemm: BSVIE BSDE: 2} for any $(t,T)\in\Delta[0,\infty)$, a.s., then it holds that $\cY'(t)=\cY(t)$ for any $t\geq0$, and $\cZ'(t)=\cZ(t)$ for a.e.\ $t\geq0$, a.s.
\end{lemm}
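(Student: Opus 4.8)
The plan is to realize $(\cY(\cdot),\cZ(\cdot))$ as the solution of an infinite horizon linear BSDE driven by a single square-integrable martingale, and to read off both the dynamics and the uniqueness from that martingale. I first establish the integrability. Writing $e^{-\lambda(s-t)}e^{-\mu t}=e^{-(\lambda-\mu)(s-t)}e^{-\mu s}$ and noting that $\lambda\geq2\mu>0$ forces $\lambda-\mu\geq\mu>0$, the kernel $e^{-(\lambda-\mu)\tau}$ is integrable on $[0,\infty)$. From $|\cY(t)|\leq\bE_t[\int^\infty_te^{-\lambda(s-t)}|Y(s)|\rd s]$, the $L^2$-contractivity of $\bE_t[\cdot]$ followed by Minkowski's (Young's) convolution inequality gives $\|\cY\|_{L^{2,-\mu}}\leq(\lambda-\mu)^{-1}\|Y\|_{L^{2,-\mu}}$. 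A parallel Cauchy--Schwarz-plus-Fubini computation on $\cZ(t)=\int^\infty_te^{-\lambda(s-t)}Z(s,t)\rd s$, using $e^{-(\lambda-\mu)(s-t)}\leq1$ after the swap of $s$ and $t$ and then the bound $\bE[\int^s_0|Z(s,t)|^2\rd t]\leq\bE[|Y(s)|^2]$, yields $\cZ\in L^{2,-\mu}_\bF$. Adaptedness of $\cY$ is automatic, and that of $\cZ$ follows since $Z(s,\cdot)$ is adapted, so $Z(s,t)$ is $\cF_t$-measurable for each $s\geq t$.

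For the dynamics, set $\xi:=\int^\infty_0e^{-\lambda s}Y(s)\rd s$, which lies in $L^2_{\cF_\infty}(\Omega;\bR^m)$ by the same estimate, and let $M(t):=\bE_t[\xi]$ be the associated square-integrable martingale. Since $\int^t_0e^{-\lambda s}Y(s)\rd s$ is $\cF_t$-measurable, one checks directly that $e^{-\lambda t}\cY(t)=M(t)-\int^t_0e^{-\lambda s}Y(s)\rd s$. The martingale representation theorem makes $M(\cdot)$ continuous, whence $\cY(\cdot)$ is continuous a.s.; applying It\^o's product rule to $e^{\lambda t}$ times this semimartingale produces $\rd\cY(t)=-\{Y(t)-\lambda\cY(t)\}\rd t+e^{\lambda t}\zeta(t)\rd W(t)$, where $\zeta$ is the integrand in $M(t)=\bE[\xi]+\int^t_0\zeta(s)\rd W(s)$. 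Integrating over $[t,T]$ then yields \eqref{lemm: BSVIE BSDE: 2}, provided one knows $e^{\lambda s}\zeta(s)=\cZ(s)$.

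The crux, and the step I expect to be the main obstacle, is the identification $\zeta(u)=\int^\infty_ue^{-\lambda r}Z(r,u)\rd r=e^{-\lambda u}\cZ(u)$. Using the M-solution identity $Y(r)=\bE[Y(r)]+\int^r_0Z(r,u)\rd W(u)$ one gets $\xi-\bE[\xi]=\int^\infty_0e^{-\lambda r}\bigl(\int^r_0Z(r,u)\rd W(u)\bigr)\rd r$, so the claim is exactly the interchange
\[
\int^\infty_0e^{-\lambda r}\Bigl(\int^r_0Z(r,u)\rd W(u)\Bigr)\rd r=\int^\infty_0\Bigl(\int^\infty_ue^{-\lambda r}Z(r,u)\rd r\Bigr)\rd W(u),
\]
a stochastic Fubini theorem on the infinite horizon. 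I would justify it by applying the classical finite-horizon stochastic Fubini on $[0,T]$ and letting $T\to\infty$, the limits on both sides holding in $L^2(\Omega)$. The required global integrability $\int^\infty_0\int^r_0\bE[e^{-2\lambda r}|Z(r,u)|^2]\rd u\,\rd r<\infty$ follows from $\bE[\int^r_0|Z(r,u)|^2\rd u]\leq\bE[|Y(r)|^2]$ combined with $e^{-2\lambda r}\leq e^{-2\mu r}$ and $Y\in L^{2,-\mu}_\bF$. Uniqueness of the martingale representation then pins down $\zeta$, whence $e^{\lambda s}\zeta(s)=\cZ(s)$ and \eqref{lemm: BSVIE BSDE: 2} holds.

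For the converse, given another pair $(\cY'(\cdot),\cZ'(\cdot))$ satisfying \eqref{lemm: BSVIE BSDE: 2}, the differences $\overline{\cY}:=\cY'-\cY$ and $\overline{\cZ}:=\cZ'-\cZ$ obey $\rd\overline{\cY}(s)=\lambda\overline{\cY}(s)\rd s+\overline{\cZ}(s)\rd W(s)$, so $e^{-\lambda s}\overline{\cY}(s)=\overline{\cY}(0)+\int^s_0e^{-\lambda r}\overline{\cZ}(r)\rd W(r)$ is a genuine $L^2$-martingale, its integrand being square-integrable since $e^{-2\lambda r}\leq e^{-2\mu r}$ and $\overline{\cZ}\in L^{2,-\mu}_\bF$. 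Hence $g(s):=\bE[e^{-2\lambda s}|\overline{\cY}(s)|^2]$ is non-decreasing in $s$ by orthogonality of martingale increments, while $\int^\infty_0g(s)\rd s\leq\bE[\int^\infty_0e^{-2\mu s}|\overline{\cY}(s)|^2\rd s]<\infty$; a non-decreasing nonnegative function with finite integral over $[0,\infty)$ must vanish identically. Thus $e^{-\lambda s}\overline{\cY}(s)\equiv0$, giving $\cY'=\cY$ by continuity, and computing the quadratic variation of the now-zero martingale forces $\overline{\cZ}=0$ a.e., a.s., so $\cZ'=\cZ$. This last comparison is exactly where the hypothesis $\lambda\geq2\mu>0$ (guaranteeing $\lambda>\mu$ and $e^{-2\lambda s}\leq e^{-2\mu s}$) is essential.
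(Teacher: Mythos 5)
Your proposal is correct, and it splits naturally into two halves relative to the paper's proof. In the forward direction you and the paper use the same two essential tools --- the M-solution representation $Y(s)=\bE[Y(s)]+\int^s_0Z(s,u)\rd W(u)$ and a stochastic Fubini theorem on the infinite horizon (the paper invokes Theorem~4.A of Berger--Mizel exactly where you invoke your finite-horizon-plus-limit argument) --- but you organize them differently: the paper verifies the BSDE identity for the discounted pair $(e^{-\lambda t}\cY(t),e^{-\lambda t}\cZ(t))$ directly on each interval $[t,T]$ by splitting $\bE_t[\int^\infty_t e^{-\lambda s}Y(s)\rd s]$ at $T$ and rewriting conditional expectations, whereas you package everything into the single terminal variable $\xi=\int^\infty_0e^{-\lambda s}Y(s)\rd s$, identify the martingale-representation integrand of $\bE_t[\xi]$ as $e^{-\lambda\cdot}\cZ(\cdot)$, and then apply the It\^o product rule; the two are essentially equivalent in content, and your integrability condition $\int^\infty_0e^{-2\lambda r}\bE[\int^r_0|Z(r,u)|^2\rd u]\rd r<\infty$ is the right one to push the finite-horizon Fubini to the limit (the tail term $\int^T_0(\int^\infty_Te^{-\lambda r}Z(r,u)\rd r)\rd W(u)$ also vanishes in $L^2$ by the bound $\bE[\int^r_0|Z(r,u)|^2\rd u]\leq\bE[|Y(r)|^2]$). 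In the converse, however, your argument is genuinely different from the paper's: the paper derives $e^{-\lambda t}\overline{\cY}(t)=\bE_t[e^{-\lambda T}\overline{\cY}(T)]$, notes that $e^{-\lambda\cdot}\overline{\cY}(\cdot)\in L^1_\bF(0,\infty;\bR^m)$, and extracts a subsequence $T_n\to\infty$ along which the terminal term tends to $0$ in $L^1$; you instead observe that $e^{-\lambda s}\overline{\cY}(s)$ is a true $L^2$-martingale whose second moment is non-decreasing yet has finite integral over $[0,\infty)$, hence vanishes identically, and the quadratic variation then kills $\overline{\cZ}$. Your route avoids the subsequence extraction and makes the elimination of $\overline{\cZ}$ explicit via the It\^o isometry (the paper leaves that last step implicit), at the modest cost of needing the true-martingale property, which your hypothesis $\lambda\geq2\mu>0$ supplies exactly as in the paper.
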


%% Proof

\begin{proof}
Observe that, by Young's convolution inequality,
\begin{align*}
	\bE\Bigl[\int^\infty_0e^{-2\mu t}\Bigl(\int^\infty_te^{-\lambda(s-t)}|Y(s)|\rd s\Bigr)^2\rd t\Bigr]&\leq\bE\Bigl[\int^\infty_0e^{-2\mu t}\Bigl(\int^\infty_te^{-2\mu(s-t)}|Y(s)|\rd s\Bigr)^2\rd t\Bigr]\\
	&=\bE\Bigl[\int^\infty_0\Bigl(\int^\infty_te^{-\mu(s-t)}e^{-\mu s}|Y(s)|\rd s\Bigr)^2\rd t\Bigr]\\
	&\leq\Bigl(\int^\infty_0e^{-\mu t}\rd t\Bigr)^2\bE\Bigl[\int^\infty_0e^{-2\mu t}|Y(t)|^2\rd t\Bigr]<\infty.
\end{align*}
This shows that the process $\cY(\cdot)$ defined by \eqref{lemm: BSVIE BSDE: 1} is well-defined and in $L^{2,-\mu}_\bF(0,\infty;\bR^m)$. Clearly, $\cY(\cdot)$ has continuous paths a.s. Also, by H\"{o}lder's inequality, we have
\begin{align*}
	\bE\Bigl[\int^\infty_0e^{-2\mu t}\Bigl(\int^\infty_te^{-\lambda(s-t)}|Z(s,t)|\rd s\Bigr)^2\rd t\Bigr]&\leq\bE\Bigl[\int^\infty_0e^{-2\mu t}\Bigl(\int^\infty_te^{-2\mu(s-t)}|Z(s,t)|\rd s\Bigr)^2\rd t\Bigr]\\
	&\leq\bE\Bigl[\int^\infty_0e^{-2\mu t}\int^\infty_te^{-2\mu(s-t)}\rd s\int^\infty_te^{-2\mu(s-t)}|Z(s,t)|^2\rd s\rd t\Bigr]\\
	&=\int^\infty_0e^{-2\mu t}\rd t\,\bE\Bigl[\int^\infty_0e^{-2\mu t}\int^t_0|Z(t,s)|^2\rd s\rd t\Bigr]<\infty.
\end{align*}
This shows that $\cZ(\cdot)$ defined by \eqref{lemm: BSVIE BSDE: 1} is well-defined and in $L^{2,-\mu}_\bF(0,\infty;\bR^{m\times d})$.

We show that $(\cY(\cdot),\cZ(\cdot))$ defined by \eqref{lemm: BSVIE BSDE: 1} satisfies \eqref{lemm: BSVIE BSDE: 2} for any $(t,T)\in\Delta[0,\infty)$. We note that, by the definition of the space $\cM^{2,-\mu}_\bF(0,\infty;\bR^m\times\bR^{m\times d})$, it holds that $Y(t)=\bE[Y(t)]+\int^t_0Z(t,s)\rd W(s)$ for a.e.\ $t\geq0$, a.s. For each $t\geq0$, define
\begin{equation*}
	\tilde{\cY}(t):=e^{-\lambda t}\cY(t)=\bE_t\Bigl[\int^\infty_te^{-\lambda s}Y(s)\rd s\Bigr]
\end{equation*}
and
\begin{equation*}
	\tilde{\cZ}(t):=e^{-\lambda t}\cZ(t)=\int^\infty_te^{-\lambda s}Z(s,t)\rd s.
\end{equation*}
For any $(t,T)\in\Delta[0,\infty)$, by using stochastic Fubini's theorem  (cf.\ Theorem 4.A in \cite{BeMi80}), we have
\begin{align*}
	\tilde{\cY}(t)&=\int^\infty_Te^{-\lambda s}\bE_t\bigl[Y(s)\bigr]\rd s+\int^T_te^{-\lambda s}\bE_t\bigl[Y(s)\bigr]\rd s\\
	&=\int^\infty_Te^{-\lambda s}\Bigl\{\bE_T\bigl[Y(s)\bigr]-\int^T_tZ(s,r)\rd W(r)\Bigr\}\rd s+\int^T_te^{-\lambda s}\Bigl\{Y(s)-\int^s_tZ(s,r)\rd W(r)\Bigr\}\rd s\\
	&=\bE_T\Bigl[\int^\infty_Te^{-\lambda s}Y(s)\rd s\Bigr]+\int^T_te^{-\lambda s}Y(s)\rd s-\int^T_t\int^\infty_re^{-\lambda s}Z(s,r)\rd s\rd W(r)\\
	&=\tilde{\cY}(T)+\int^T_te^{-\lambda s}Y(s)\rd s-\int^T_t\tilde{\cZ}(s)\rd W(s).
\end{align*}
Thus, by using It\^{o}'s formula, we see that
\begin{align*}
	\cY(t)&=e^{\lambda t}\tilde{\cY}(t)=e^{\lambda T}\tilde{\cY}(T)+\int^T_t\{Y(s)-\lambda e^{\lambda s}\tilde{\cY}(s)\}\rd s-\int^T_te^{\lambda s}\tilde{\cZ}(s)\rd W(s)\\
	&=\cY(T)+\int^T_t\{Y(s)-\lambda\cY(s)\}\rd s-\int^T_t\cZ(s)\rd W(s),
\end{align*}
and hence \eqref{lemm: BSVIE BSDE: 2} holds.

Next, suppose that $(\cY'(\cdot),\cZ'(\cdot))\in L^{2,-\mu}_\bF(0,\infty;\bR^m)\times L^{2,-\mu}_\bF(0,\infty;\bR^{m\times d})$ with $\cY'(\cdot)$ being continuous also satisfies \eqref{lemm: BSVIE BSDE: 2} for any $(t,T)\in\Delta[0,\infty)$. Then It\^{o}'s formula yields that
\begin{equation*}
	e^{-\lambda t}\{\cY'(t)-\cY(t)\}=e^{-\lambda T}\{\cY'(T)-\cY(T)\}-\int^T_te^{-\lambda s}\{\cZ'(s)-\cZ(s)\}\rd W(s),
\end{equation*}
and hence $e^{-\lambda t}\{\cY'(t)-\cY(t)\}=\bE_t\bigl[e^{-\lambda T}\{\cY'(T)-\cY(T)\}\bigr]$ a.s.\ for any $(t,T)\in\Delta[0,\infty)$. Since $-\lambda<-\mu$, we have $L^{2,-\mu}_\bF(0,\infty;\bR^m)\subset L^{1,-\lambda}_\bF(0,\infty;\bR^m)$, and hence the process $(e^{-\lambda t}\{\cY'(t)-\cY(t)\})_{t\geq0}$ is in $L^1_\bF(0,\infty;\bR^m)$. This implies that there exists a sequence $\{T_n\}_{n\in\bN}\in(0,\infty)$ such that $\lim_{n\to\infty}T_n=\infty$ and $\lim_{n\to\infty}e^{-\lambda T_n}\{\cY'(T_n)-\cY(T_n)\}=0$ in $L^1_{\cF_\infty}(\Omega;\bR^m)$. Therefore, we get $\cY'(t)=\cY(t)$ a.s.\ for any $t\geq0$. This implies that $\cZ'(t)=\cZ(t)$ for a.e.\ $t\geq0$, a.s. We complete the proof.
\end{proof}

%% Remark

\begin{rem}
Equation~\eqref{lemm: BSVIE BSDE: 2} can be seen as an infinite horizon BSDE, which is written in the differential form:
\begin{equation}\label{infinite horizon BSDE given Y}
	\mathrm{d}\cY(t)=-\{Y(t)-\lambda\cY(t)\}\rd t+\cZ(t)\rd W(t),\ t\geq0.
\end{equation}
For a given constant $\lambda\in\bR$ and a given $\bR^m$-valued adapted process $Y(\cdot)$ such that $\bE[\int^T_0|Y(t)|^2\rd t]<\infty$ for any $T>0$, we say that a pair $(\cY(\cdot),\cZ(\cdot))$ of an $\bR^m$-valued continuous and adapted process $\cY(\cdot)$ and an $\bR^{m\times d}$-valued adapted process $\cZ(\cdot)$ is an adapted solution to the infinite horizon BSDE~\eqref{infinite horizon BSDE given Y} if $\bE[\int^T_0\{|\cY(t)|^2+|\cZ(t)|^2\}\rd t]<\infty$ for any $T>0$, and if \eqref{lemm: BSVIE BSDE: 2} holds for any $(t,T)\in\Delta[0,\infty)$, a.s. For detailed discussions on infinite horizon BSDEs, see, for example, \cite{FuTe04} and references cited therein. Lemma~\ref{lemm: BSVIE BSDE} shows that, for each given $(Y(\cdot),Z(\cdot,\cdot))\in\cM^{2,-\mu}_\bF(0,\infty;\bR^m\times\bR^{m\times d})$ with $\lambda\geq2\mu>0$, there exists a unique adapted solution $(\cY(\cdot),\cZ(\cdot))$ to the infinite horizon BSDE~\eqref{infinite horizon BSDE given Y} in $L^{2,-\mu}_\bF(0,\infty;\bR^m)\times L^{2,-\mu}_\bF(0,\infty;\bR^{m\times d})$ given by the explicit formula \eqref{lemm: BSVIE BSDE: 1}. Some interesting applications of this result are discussed in Section~\ref{subsection: example}.
\end{rem}

%%%%%%%%%%%%%%

\section{Discounted control problems for SVIEs with singular coefficients}\label{section: control}

%%%%%%%%%%%%%%

As an application of the results on infinite horizon BSVIEs, we consider an infinite horizon stochastic control problem where the dynamics of the state process is described as a (forward) SVIE with singular coefficients. The duality principle which we proved in the previous section plays a crucial role.

For each $\mu\in\bR$, define the set of \emph{control processes} by $\cU_{-\mu}:=L^{2,-\mu}_\bF(0,\infty;U)$, where $U$ is a convex body (i.e., $U$ is convex and has a nonempty interior) in $\bR^\ell$ with $\ell\in\bN$. For each control process $u(\cdot)\in\cU_{-\mu}$, define the corresponding \emph{state process} $X^u(\cdot)$ as the solution of the following controlled SVIE:
\begin{equation}\label{controlled SVIE}
	X^u(t)=\varphi(t)+\int^t_0b(t,s,X^u(s),u(s))\rd s+\int^t_0\sigma(t,s,X^u(s),u(s))\rd W(s),\ t\geq0,
\end{equation}
where $\varphi(\cdot)$ is a given process, and $b,\sigma$ are given deterministic maps. In order to measure the performance of $u(\cdot)$ and $X^u(\cdot)$, we consider the following \emph{discounted cost functional}:
\begin{equation}\label{cost functional}
	J_\lambda(u(\cdot)):=\bE\Bigl[\int^\infty_0e^{-\lambda t}h(t,X^u(t),u(t))\rd t\Bigr],
\end{equation}
where $h$ is a given $\bR$-valued deterministic function and $\lambda\in\bR$ is a \emph{discount rate}. The stochastic control problem is a problem to seek a control process $\hat{u}(\cdot)\in\cU_{-\mu}$ such that
\begin{equation*}
	J_\lambda(\hat{u}(\cdot))=\inf_{u(\cdot)\in\cU_{-\mu}}J_\lambda(u(\cdot)).
\end{equation*}
If it is the case, we call $\hat{u}(\cdot)$ an \emph{optimal control}.

%% Assumption

\begin{assum}\label{assum: control}
\begin{itemize}
\item[(i)]
$\varphi(\cdot)\in L^{2,-\mu}_\bF(0,\infty;\bR^n)$.
\item[(ii)]
$b:\Delta^\comp[0,\infty)\times\bR^n\times\bR^\ell\to\bR^n$ and $\sigma:\Delta^\comp[0,\infty)\times\bR^n\times\bR^\ell\to\bR^{n\times d}$ are measurable; $b(t,s,\cdot,\cdot)$ and $\sigma(t,s,\cdot,\cdot)$ are continuously differentiable in $(x,u)\in\bR^n\times\bR^\ell$ for a.e.\ $(t,s)\in\Delta^\comp[0,\infty)$; there exist $K_{b,x},K_{b,u}\in L^{1,*}(0,\infty;\bR_+)$ and $K_{\sigma,x},K_{\sigma,u}\in L^{2,*}(0,\infty;\bR_+)$ such that
\begin{align*}
	&|b(t,s,x,u)-b(t,s,x',u')|\leq K_{b,x}(t-s)|x-x'|+K_{b,u}(t-s)|u-u'|,\\
	&|\sigma(t,s,x,u)-\sigma(t,s,x',u')|\leq K_{\sigma,x}(t-s)|x-x'|+K_{\sigma,u}(t-s)|u-u'|,
\end{align*}
for any $x,x'\in\bR^n$ and $u,u'\in\bR^\ell$, for a.e.\ $(t,s)\in\Delta^\comp[0,\infty)$; $b(t,s,0,0)=0$ and $\sigma(t,s,0,0)=0$ for a.e.\ $(t,s)\in\Delta^\comp[0,\infty)$.
\item[(iii)]
$h:[0,\infty)\times\bR^n\times\bR^\ell\to\bR$ is measurable; $h(t,\cdot,\cdot)$ is continuously differentiable in $(x,u)\in\bR^n\times\bR^\ell$ for a.e.\ $t\geq0$; there exists a constant $C>0$ such that
\begin{equation*}
	|h(t,x,u)|\leq C(1+|x|^2+|u|^2),\ |\partial_xh(t,x,u)|\leq C(1+|x|+|u|),\ |\partial_uh(t,x,u)|\leq C(1+|x|+|u|),
\end{equation*}
for any $(x,u)\in\bR^n\times\bR^\ell$, for a.e.\ $t\geq0$.
\end{itemize}
\end{assum}

Now we discuss the choice of the parameters $\mu$ and $\lambda$. First, in order to ensure a nonzero constant control $u(\cdot)\equiv u\in U$ and a nonzero initial state $\varphi(\cdot)\equiv x_0\in\bR^n$, the weight parameter $\mu$ has to be strictly positive. Second, by Lemma~\ref{lemm: coefficients SVIE}, if $[K_{b,u}]_1(\mu)+[K_{\sigma,u}]_2(\mu)<\infty$, then for any $u(\cdot)\in\cU_{-\mu}$,
\begin{equation*}
	\bE\Bigl[\int^\infty_0e^{-2\mu t}\Bigl(\int^t_0|b(t,s,0,u(s))|\rd s\Bigr)^2\rd t\Bigr]^{1/2}\leq[K_{b,u}]_1(\mu)\bE\Bigl[\int^\infty_0e^{-2\mu t}|u(t)|^2\rd t\Bigr]^{1/2}<\infty
\end{equation*}
and
\begin{equation*}
	\bE\Bigl[\int^\infty_0e^{-2\mu t}\int^t_0\bigl|\sigma(t,s,0,u(s))\bigr|^2\rd s\rd t\Bigr]^{1/2}\leq[K_{\sigma,u}]_2(\mu)\bE\Bigl[\int^\infty_0e^{-2\mu t}|u(t)|^2\rd t\Bigr]^{1/2}<\infty.
\end{equation*}
Third, if in addition $[K_{b,x}]_1(\mu)+[K_{\sigma,x}]_2(\mu)<1$, by Proposition~\ref{prop: SVIE} and Remark~\ref{rem: SVIE nonzero}, the controlled SVIE~\eqref{controlled SVIE} admits a unique solution $X^u(\cdot)\in L^{2,-\mu}_\bF(0,\infty;\bR^n)$ for any $u(\cdot)\in\cU_{-\mu}$. From these observations, we define
\begin{equation}\label{domain: control}
	\rho_{b,\sigma;x,u}:=\inf\{\rho\in\bR_+\,|\,[K_{b,u}]_1(\rho)+[K_{\sigma,u}]_2(\rho)<\infty,\ [K_{b,x}]_1(\rho)+[K_{\sigma,x}]_2(\rho)\leq1\}.
\end{equation}
If $\mu>\rho_{b,\sigma;x,u}$ and $\lambda\geq2\mu$, the state process $X^u(\cdot)\in L^{2,-\mu}_\bF(0,\infty;\bR^n)$ and the discounted cost functional $J_\lambda(u(\cdot))\in\bR$ is well-defined for any $u(\cdot)\in\cU_{-\mu}$. We note that the parameters $\mu$ and $\lambda$ satisfying these conditions are strictly positive.

%%%%

\subsection{Pontryagin's maximum principle}

%%%%

In this subsection, we provide a necessary condition for optimality by means of \emph{Pontryagin's maximum principle}. Let Assumption~\ref{assum: control} hold, and suppose that $\mu,\lambda\in\bR$ satisfy the conditions
\begin{equation}\label{mu lambda}
	\mu>\rho_{b,\sigma;x,u}\ \text{and}\ \lambda\geq2\mu.
\end{equation}

Fix a control process $\hat{u}(\cdot)\in\cU_{-\mu}$, and denote the corresponding state process by $\hat{X}(\cdot):=X^{\hat{u}}(\cdot)$. We use the following notations:
\begin{align*}
	&b_x(t,s):=\partial_xb(t,s,\hat{X}(s),\hat{u}(s)),\ b_u(t,s):=\partial_ub(t,s,\hat{X}(s),\hat{u}(s)),\\
	&\sigma^k_x(t,s):=\partial_x\sigma^k(t,s,\hat{X}(s),\hat{u}(s)),\ \sigma^k_u(t,s):=\partial_u\sigma^k(t,s,\hat{X}(s),\hat{u}(s)),\ k=1,\dots,d,
\end{align*}
for $(t,s)\in\Delta^\comp[0,\infty)$, and
\begin{equation*}
	h_x(t):=\partial_xh(t,\hat{X}(t),\hat{u}(t)),\ h_u(t):=\partial_uh(t,\hat{X}(t),\hat{u}(t)),
\end{equation*}
for $t\geq0$.

%% Remark

\begin{rem}
Here and elsewhere, for each scalar-valued differentiable function $f$ on $\bR^{d_1}$ with $d_1\in\bN$, the derivative $\partial_xf(x')\in\bR^{d_1}$ of $f$ at $x'\in\bR^{d_1}$ is understood as a column vector. If $f$ is $\bR^{d_2}$-valued with $d_2\in\bN$, the derivative $\partial_xf(x')$ is understood as a $(d_2\times d_1)$-matrix. 
\end{rem}

Note that $b_x(\cdot,\cdot),\sigma^k_x(\cdot,\cdot)$, $k=1,\dots,d$, are $\bR^{n\times n}$-valued and measurable, $b_u(\cdot,\cdot),\sigma^k_u(\cdot,\cdot)$, $k=1,\dots,d$, are $\bR^{n\times\ell}$-valued and measurable, $b_x(t,\cdot),b_u(t,\cdot),\sigma^k_x(t,\cdot),\sigma^k_u(t,\cdot)$, $k=1,\dots,d$, are adapted for each $t\geq0$, and
\begin{align*}
	&|b_x(t,s)|\leq K_{b,x}(t-s),\ |b_u(t,s)|\leq K_{b,u}(t-s),\\
	&\Bigl(\sum^d_{k=1}|\sigma^k_x(t,s)|^2\Bigr)^{1/2}\leq K_{\sigma,x}(t-s),\ \Bigl(\sum^d_{k=1}|\sigma^k_u(t,s)|^2\Bigr)^{1/2}\leq K_{\sigma,u}(t-s),
\end{align*}
for a.e.\ $(t,s)\in\Delta^\comp[0,\infty)$, a.s. Furthermore,
\begin{equation*}
	h_x(\cdot)\in L^{2,-\mu}_\bF(0,\infty;\bR^n),\ h_u(\cdot)\in L^{2,-\mu}_\bF(0,\infty;\bR^\ell).
\end{equation*}
We first show the following proposition.

%% Proposition

\begin{prop}\label{prop: variational inequality}
Let Assumption~\ref{assum: control} hold, and suppose that $\mu,\lambda\in\bR$ satisfy the condition~\eqref{mu lambda}. If $\hat{u}(\cdot)$ is an optimal control, then for any $u(\cdot)\in\cU_{-\mu}$, the following \emph{variational inequality} holds:
\begin{equation}\label{prop: variational inequality: 1}
	\bE\Bigl[\int^\infty_0e^{-\lambda t}\bigl\{\langle h_x(t),X_1(t)\rangle+\langle h_u(t),u(t)-\hat{u}(t)\rangle\bigr\}\rd t\Bigr]\geq0,
\end{equation}
where $X_1(\cdot)\in L^{2,-\mu}_\bF(0,\infty;\bR^n)$ is the solution of the following \emph{variational SVIE}:
\begin{equation}\label{prop: variational inequality: 2}
\begin{split}
	&X_1(t)=\int^t_0b_u(t,s)(u(s)-\hat{u}(s))\rd s+\sum^d_{k=1}\int^t_0\sigma^k_u(t,s)(u(s)-\hat{u}(s))\rd W_k(s)\\
	&\hspace{2cm}+\int^t_0b_x(t,s)X_1(s)\rd s+\sum^d_{k=1}\int^t_0\sigma^k_x(t,s)X_1(s)\rd W_k(s),\ t\geq0.
\end{split}
\end{equation}
\end{prop}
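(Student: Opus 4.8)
The plan is to apply a first-order \emph{convex perturbation} argument, available because $U$ is convex. Fix $u(\cdot)\in\cU_{-\mu}$ and, for $\theta\in(0,1]$, set $u^\theta(\cdot):=(1-\theta)\hat{u}(\cdot)+\theta u(\cdot)$; by convexity of $U$ and linearity of the defining conditions, $u^\theta(\cdot)\in\cU_{-\mu}$, and the corresponding state process $X^\theta(\cdot):=X^{u^\theta}(\cdot)$ is well-defined in $L^{2,-\mu}_\bF(0,\infty;\bR^n)$ under~\eqref{mu lambda}. First I record that the variational SVIE~\eqref{prop: variational inequality: 2} is uniquely solvable: its free term $\varphi_1(t):=\int^t_0b_u(t,s)(u(s)-\hat{u}(s))\rd s+\sum^d_{k=1}\int^t_0\sigma^k_u(t,s)(u(s)-\hat{u}(s))\rd W_k(s)$ lies in $L^{2,-\mu}_\bF(0,\infty;\bR^n)$ by Lemma~\ref{lemm: coefficients SVIE} (using $[K_{b,u}]_1(\mu)+[K_{\sigma,u}]_2(\mu)<\infty$), and since $\mu>\rho_{b,\sigma;x,u}$ the linear coefficients $(b_x,\sigma_x)$ fall within the scope of Proposition~\ref{prop: SVIE}, giving a unique $X_1(\cdot)\in L^{2,-\mu}_\bF(0,\infty;\bR^n)$.

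The core of the proof is to show that $X^\theta_1(\cdot):=\theta^{-1}(X^\theta(\cdot)-\hat{X}(\cdot))$ converges to $X_1(\cdot)$ in $L^{2,-\mu}_\bF(0,\infty;\bR^n)$ as $\theta\downarrow0$. Applying the stability estimate~\eqref{prop: SVIE: 2} to $X^\theta(\cdot)$ and $\hat{X}(\cdot)$ (same $\varphi$, drivers differing only through the control), together with the bound $|b(t,s,\hat{X}(s),u^\theta(s))-b(t,s,\hat{X}(s),\hat{u}(s))|\leq\theta K_{b,u}(t-s)|u(s)-\hat{u}(s)|$ and its $\sigma$-analogue, yields via Lemma~\ref{lemm: coefficients SVIE} that $\|X^\theta-\hat{X}\|_{2,-\mu}\leq C_\mu\theta\{[K_{b,u}]_1(\mu)+[K_{\sigma,u}]_2(\mu)\}\|u-\hat{u}\|_{2,-\mu}$; hence $X^\theta\to\hat{X}$ in $L^{2,-\mu}$ and, along a subsequence, $X^\theta(s)\to\hat{X}(s)$ for a.e.\ $s$, a.s. Expanding the increments of $b,\sigma$ by the fundamental theorem of calculus, $X^\theta_1(\cdot)$ solves a linear SVIE with coefficients $b^\theta_x(t,s):=\int^1_0\partial_xb(t,s,(1-\tau)\hat{X}(s)+\tau X^\theta(s),(1-\tau)\hat{u}(s)+\tau u^\theta(s))\rd\tau$ (and analogously $\sigma^{k,\theta}_x$) and a free term $\varphi^\theta_1$ built from the corresponding $b^\theta_u,\sigma^{k,\theta}_u$. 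Applying~\eqref{prop: SVIE: 2} to $X^\theta_1$ and $X_1$ bounds $\|X^\theta_1-X_1\|_{2,-\mu}$ by $C_\mu$ times the $L^{2,-\mu}$-norm of $(\varphi^\theta_1-\varphi_1)+\int^\cdot_0(b^\theta_x-b_x)(\cdot,s)X_1(s)\rd s+\sum^d_{k=1}\int^\cdot_0(\sigma^{k,\theta}_x-\sigma^k_x)(\cdot,s)X_1(s)\rd W_k(s)$.

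Each of these terms tends to zero by dominated convergence, and I expect this step to be the main technical obstacle in the presence of the diagonal singularity of the coefficients. The continuity of $\partial_xb,\partial_x\sigma,\partial_ub,\partial_u\sigma$ and the a.e.\ convergence of the intermediate points give $b^\theta_x\to b_x$, $\sigma^{k,\theta}_x\to\sigma^k_x$ (and the $u$-analogues) a.e.\ along the subsequence, while the uniform bounds $|b^\theta_x-b_x|\leq2K_{b,x}(t-s)$ and $|\sigma^{k,\theta}_x-\sigma^k_x|\leq2K_{\sigma,x}(t-s)$ combined with Lemma~\ref{lemm: coefficients SVIE} (and It\^{o}'s isometry for the stochastic integrals) furnish $L^{2,-\mu}$-integrable dominating functions. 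This lets me pass to the limit and conclude $\|X^\theta_1-X_1\|_{2,-\mu}\to0$; since $X_1$ is independent of the subsequence, the full family converges.

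Finally, optimality of $\hat{u}(\cdot)$ gives $J_\lambda(u^\theta(\cdot))\geq J_\lambda(\hat{u}(\cdot))$ for every $\theta\in(0,1]$, so $\theta^{-1}\{J_\lambda(u^\theta)-J_\lambda(\hat{u})\}\geq0$. Expanding $h$ by the fundamental theorem of calculus and dividing by $\theta$ writes this difference quotient as $\bE[\int^\infty_0e^{-\lambda t}\int^1_0\{\langle h^\theta_x(t),X^\theta_1(t)\rangle+\langle h^\theta_u(t),u(t)-\hat{u}(t)\rangle\}\rd\tau\rd t]$, where $h^\theta_x,h^\theta_u$ are the gradients of $h$ at the intermediate points. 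Since $\lambda\geq2\mu$, the weight satisfies $e^{-\lambda t}\leq e^{-2\mu t}$, so each inner product can be estimated by the Cauchy--Schwarz argument of Remark~\ref{rem: duality}(iii); using the growth bounds in Assumption~\ref{assum: control}(iii) to deduce $h^\theta_x\to h_x$ and $h^\theta_u\to h_u$ in $L^{2,-\mu}$, together with $X^\theta_1\to X_1$ in $L^{2,-\mu}$, I pass the limit $\theta\downarrow0$ through the integral. The resulting limit is precisely the left-hand side of~\eqref{prop: variational inequality: 1}, which is therefore nonnegative, completing the proof.
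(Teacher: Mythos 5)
Your proposal is correct and follows essentially the same route as the paper's proof: a convex perturbation $u^\theta=(1-\theta)\hat{u}+\theta u$, identification of the difference quotient $\theta^{-1}(X^\theta-\hat{X})$ as the solution of a linear SVIE with averaged coefficients, convergence to $X_1(\cdot)$ in $L^{2,-\mu}_\bF(0,\infty;\bR^n)$ via the stability estimate~\eqref{prop: SVIE: 2} plus a subsequence/dominated-convergence argument, and passage to the limit in the nonnegative difference quotient of $J_\lambda$ using $\lambda\geq2\mu$. The only differences are cosmetic (your explicit bound $\|X^\theta-\hat{X}\|\leq C_\mu\theta\{[K_{b,u}]_1(\mu)+[K_{\sigma,u}]_2(\mu)\}\|u-\hat{u}\|$ versus the paper's appeal to the same estimate), so no further comment is needed.
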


%% Proof

\begin{proof}
By Proposition~\ref{prop: SVIE}, for any $u(\cdot)\in\cU_{-\mu}$, the variational SVIE~\eqref{prop: variational inequality: 2} admits a unique solution $X_1(\cdot)\in L^{2,-\mu}_\bF(0,\infty;\bR^n)$, and thus the expectation in the left-hand side of \eqref{prop: variational inequality: 1} is well-defined.

Let $u(\cdot)\in\cU_{-\mu}$ be fixed. For each $\gamma\in(0,1]$, define
\begin{equation*}
	u^\gamma(\cdot):=\gamma u(\cdot)+(1-\gamma)\hat{u}(\cdot)=\hat{u}(\cdot)+\gamma(u(\cdot)-\hat{u}(\cdot)).
\end{equation*}
Since $U\subset\bR^\ell$ is convex, the process $u^\gamma(\cdot)$ is in $\cU_{-\mu}$. Denote the corresponding state process by $X^\gamma(\cdot):=X^{u^\gamma}(\cdot)$, and define $\tilde{X}^\gamma_1(\cdot):=\frac{1}{\gamma}\bigl(X^\gamma(\cdot)-\hat{X}(\cdot)\bigr)$. By simple calculations, we see that $\tilde{X}^\gamma_1(\cdot)\in L^{2,-\mu}_\bF(0,\infty;\bR^n)$ is the solution of the following linear SVIE:
\begin{align*}
	&\tilde{X}^\gamma_1(t)=\int^t_0\tilde{b}^\gamma_u(t,s)(u(s)-\hat{u}(s))\rd s+\sum^d_{k=1}\int^t_0\tilde{\sigma}^{\gamma,k}_u(t,s)(u(s)-\hat{u}(s))\rd W_k(s)\\
	&\hspace{2cm}+\int^t_0\tilde{b}^\gamma_x(t,s)\tilde{X}^\gamma_1(s)\rd s+\sum^d_{k=1}\int^t_0\tilde{\sigma}^{\gamma,k}_x(t,s)\tilde{X}^\gamma_1(s)\rd W_k(s),\ t\geq0,
\end{align*}
where
\begin{equation*}
	\tilde{b}^\gamma_x(t,s):=\int^1_0\partial_xb\bigl(t,s,\hat{X}(s)+\theta(X^\gamma(s)-\hat{X}(s)),\hat{u}(s)+\theta(u^\gamma(s)-\hat{u}(s))\bigr)\rd\theta
\end{equation*}
for $(t,s)\in\Delta^\comp[0,\infty)$, and similar for $\tilde{b}^\gamma_u(\cdot,\cdot),\tilde{\sigma}^{\gamma,k}_x(\cdot,\cdot),\tilde{\sigma}^{\gamma,k}_u(\cdot,\cdot)$, $k=1,\dots,d$. By Proposition~\ref{prop: SVIE}, the following estimate holds:
\begin{align*}
	&\bE\Bigl[\int^\infty_0e^{-2\mu t}|\tilde{X}^\gamma_1(t)-X_1(t)|^2\rd t\Bigr]^{1/2}\\
	&\leq C_\mu\bE\Bigl[\int^\infty_0e^{-2\mu t}\Bigl|\int^t_0\bigl(\tilde{b}^\gamma_u(t,s)-b_u(t,s)\bigr)\bigl(u(s)-\hat{u}(s)\bigr)\rd s\\
	&\hspace{2.5cm}+\sum^d_{k=1}\int^t_0\bigl(\tilde{\sigma}^{\gamma,k}_u(t,s)-\sigma^k_u(t,s)\bigr)\bigl(u(s)-\hat{u}(s)\bigr)\rd W_k(s)\\
	&\hspace{2.5cm}+\int^t_0\bigl(\tilde{b}^\gamma_x(t,s)-b_x(t,s)\bigr)X_1(s)\rd s+\sum^d_{k=1}\int^t_0\bigl(\tilde{\sigma}^{\gamma,k}_x(t,s)-\sigma^k_x(t,s)\bigr)X_1(s)\rd W_k(s)\Bigr|^2\rd t\Bigr]^{1/2}\\
	&\leq C_\mu\Bigl\{\bE\Bigl[\int^\infty_0e^{-2\mu t}\Bigl(\int^t_0|\tilde{b}^\gamma_u(t,s)-b_u(t,s)|\,|u(s)-\hat{u}(s)|\rd s\Bigr)^2\rd t\Bigr]^{1/2}\\
	&\hspace{1cm}+\bE\Bigl[\int^\infty_0e^{-2\mu t}\int^t_0\sum^d_{k=1}|\tilde{\sigma}^{\gamma,k}_u(t,s)-\sigma^k_u(t,s)|^2|u(s)-\hat{u}(s)|^2\rd s\rd t\Bigr]^{1/2}\\
	&\hspace{1cm}+\bE\Bigl[\int^\infty_0e^{-2\mu t}\Bigl(\int^t_0|\tilde{b}^\gamma_x(t,s)-b_x(t,s)|\,|X_1(s)|\rd s\Bigr)^2\rd t\Bigr]^{1/2}\\
	&\hspace{1cm}+\bE\Bigl[\int^\infty_0e^{-2\mu t}\int^t_0\sum^d_{k=1}|\tilde{\sigma}^{\gamma,k}_x(t,s)-\sigma^k_x(t,s)|^2|X_1(s)|^2\rd s\rd t\Bigr]^{1/2}\Bigr\},
\end{align*}
where $C_\mu:=1/(1-[K_{b,x}]_1(\mu)-[K_{\sigma,x}]_2(\mu))\in(0,\infty)$. Now we show that
\begin{equation}\label{gamma convergence}
	\lim_{\gamma\downarrow0}\bE\Bigl[\int^\infty_0e^{-2\mu t}\Bigl(\int^t_0|\tilde{b}^\gamma_x(t,s)-b_x(t,s)|\,|X_1(s)|\rd s\Bigr)^2\rd t\Bigr]=0.
\end{equation}
Since $X_1(\cdot)\in L^{2,-\mu}_\bF(0,\infty;\bR^n)$ with $\mu>\rho_{b,\sigma;x,u}$, by using Young's convolution inequality, we see that
\begin{equation*}
	\bE\Bigl[\int^\infty_0e^{-2\mu t}\Bigl(\int^t_0K_{b,x}(t-s)|X_1(s)|\rd s\Bigr)^2\rd t\Bigr]<\infty.
\end{equation*}
Clearly, $u^\gamma(\cdot)$ converges to $\hat{u}(\cdot)$ in $L^{2,-\mu}_\bF(0,\infty;\bR^\ell)$ as $\gamma\downarrow0$. By the stability estimate for SVIEs (see Proposition~\ref{prop: SVIE: 2}), we can easily show that $X^\gamma(\cdot)\to\hat{X}(\cdot)$ in $L^{2,-\mu}_\bF(0,\infty;\bR^n)$ as $\gamma\downarrow0$. Take an arbitrary sequence $\{\gamma_n\}_{n\in\bN}\subset(0,1]$. Then there exists a subsequence $\{\gamma_{n_k}\}_{k\in\bN}$ of $\{\gamma_n\}_{n\in\bN}$ such that $u^{\gamma_{n_k}}(s)\to\hat{u}(s)$ and $X^{\gamma_{n_k}}(s)\to\hat{X}(s)$ as $k\to\infty$ for a.e.\ $s\geq0$, a.s. Since $\partial_xb(t,s,x,u)$ is continuous in $(x,u)\in\bR^n\times\bR^\ell$ and bounded by $K_{b,x}(t-s)$ for a.e.\ $(t,s)\in\Delta^\comp[0,\infty)$, by the dominated convergence theorem, we see that
\begin{equation*}
\lim_{k\to\infty}\bE\Bigl[\int^\infty_0e^{-2\mu t}\Bigl(\int^t_0|\tilde{b}^{\gamma_{n_k}}_x(t,s)-b_x(t,s)|\,|X_1(s)|\rd s\Bigr)^2\rd t\Bigr]=0.
\end{equation*}
Since the sequence $\{\gamma_n\}_{n\in\bN}\subset(0,1]$ is arbitrary, we get \eqref{gamma convergence}. By the same arguments, we can show that
\begin{align*}
	&\lim_{\gamma\downarrow0}\bE\Bigl[\int^\infty_0e^{-2\mu t}\Bigl(\int^t_0|\tilde{b}^\gamma_u(t,s)-b_u(t,s)|\,|u(s)-\hat{u}(s)|\rd s\Bigr)^2\rd t\Bigr]=0,\\
	&\lim_{\gamma\downarrow0}\bE\Bigl[\int^\infty_0e^{-2\mu t}\int^t_0\sum^d_{k=1}|\tilde{\sigma}^{\gamma,k}_x(t,s)-\sigma^k_x(t,s)|^2|X_1(s)|^2\rd s\rd t\Bigr]=0,\\
	&\lim_{\gamma\downarrow0}\bE\Bigl[\int^\infty_0e^{-2\mu t}\int^t_0\sum^d_{k=1}|\tilde{\sigma}^{\gamma,k}_u(t,s)-\sigma^k_u(t,s)|^2|u(s)-\hat{u}(s)|^2\rd s\rd t\Bigr]=0.
\end{align*}
Thus, we see that
\begin{equation}\label{proof: variational inequality: 1}
	\lim_{\gamma\downarrow0}\bE\Bigl[\int^\infty_0e^{-2\mu t}|\tilde{X}^\gamma_1(t)-X_1(t)|^2\rd t\Bigr]=0.
\end{equation}

Since $\hat{u}(\cdot)$ is optimal, the following inequality holds for any $\gamma\in(0,1]$:
\begin{equation}\label{proof: variational inequality: 2}
	0\leq\frac{J_\lambda(u^\gamma(\cdot))-J_\lambda(\hat{u}(\cdot))}{\gamma}=\bE\Bigl[\int^\infty_0e^{-\lambda t}\bigl\{\langle \tilde{h}^\gamma_x(t),\tilde{X}^\gamma_1(t)\rangle+\langle\tilde{h}^\gamma_u(t),u(t)-\hat{u}(t)\rangle\bigr\}\rd t\Bigr],
\end{equation}
where
\begin{equation*}
	\tilde{h}^\gamma_x(t):=\int^1_0\partial_xh\bigl(t,\hat{X}(t)+\theta(X^\gamma(t)-\hat{X}(t)),\hat{u}(t)+\theta(u^\gamma(t)-\hat{u}(t))\bigr)\rd\theta
\end{equation*}
for $t\geq0$, and similar for $\tilde{h}^\gamma_u(\cdot)$. Noting \eqref{proof: variational inequality: 1} and the assumption $\lambda\geq2\mu$, we can easily show that the right-hand side of \eqref{proof: variational inequality: 2} tends to
\begin{equation*}
	\bE\Bigl[\int^\infty_0e^{-\lambda t}\bigl\{\langle h_x(t),X_1(t)\rangle+\langle h_u(t),u(t)-\hat{u}(t)\rangle\bigr\}\rd t\Bigr]
\end{equation*}
as $\gamma\downarrow0$. Therefore, the variational inequality \eqref{prop: variational inequality: 1} holds, and we finish the proof.
\end{proof}

The above proposition provides a necessary condition for $\hat{u}(\cdot)$ to be optimal. However, since the process $X_1(\cdot)$ depends on another control process $u(\cdot)$, the variational inequality \eqref{prop: variational inequality} itself is not useful to determine $\hat{u}(\cdot)$. In order to obtain a more informative necessary condition, we have to get rid of $X_1(\cdot)$. To do so, we introduce the following \emph{adjoint equation} of the form of an infinite horizon Type-II BSVIE:
\begin{equation}\label{adjoint equation}
	\hat{Y}(t)=h_x(t)+\int^\infty_te^{-\lambda(s-t)}\Bigl\{b_x(s,t)^\top\hat{Y}(s)+\sum^d_{k=1}\sigma^k_x(s,t)^\top\hat{Z}^k(s,t)\Bigr\}\rd s-\int^\infty_t\hat{Z}(t,s)\rd W(s),\ t\geq0.
\end{equation}
Since $h_x(\cdot)\in L^{2,-\mu}_\bF(0,\infty;\bR^n)$, $|b_x(s,t)|\leq K_{b,x}(s-t)$ and $\bigl(\sum^d_{k=1}|\sigma^k_x(s,t)|^2\bigr)^{1/2}\leq K_{\sigma,x}(s-t)$ for a.e.\ $(t,s)\in\Delta[0,\infty)$, a.s., and $-\mu+\lambda\geq\mu>\rho_{b,\sigma;x,u}$, by Theorem~\ref{theo: well-posedness BSVIE}, there exists a unique adapted M-solution $(\hat{Y}(\cdot),\hat{Z}(\cdot,\cdot))\in\cM^{2,-\mu}_\bF(0,\infty;\bR^n\times\bR^{n\times d})$ of the adjoint equation \eqref{adjoint equation}. We note that $(\hat{Y}(\cdot),\hat{Z}(\cdot,\cdot))$ depends on the given control-state pair $(\hat{u}(\cdot),\hat{X}(\cdot))$, but it does not depend on another control process $u(\cdot)$. The following theorem is a type of \emph{Pontryagin's maximum principle} which provides a necessary condition for optimality.

%% Theorem

\begin{theo}\label{theo: maximum principle}
Let Assumption~\ref{assum: control} hold, and suppose that $\mu,\lambda\in\bR$ satisfy the condition~\eqref{mu lambda}. If $\hat{u}(\cdot)\in\cU_{-\mu}$ is an optimal control, then it holds that
\begin{equation}\label{theo: maximum principle: 1}
	\Bigl\langle h_u(t)+\int^\infty_te^{-\lambda(s-t)}\Bigl\{b_u(s,t)^\top\bE_t\bigl[\hat{Y}(s)]+\sum^d_{k=1}\sigma^k_u(s,t)^\top\hat{Z}^k(s,t)\Bigr\}\rd s,u-\hat{u}(t)\Bigr\rangle\geq0,\ \forall\,u\in U,
\end{equation}
for a.e.\ $t\geq0$, a.s.
\end{theo}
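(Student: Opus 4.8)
The plan is to combine the variational inequality of Proposition~\ref{prop: variational inequality} with the duality principle of Theorem~\ref{theo: duality}, and then to pass from the resulting integrated inequality to the pointwise one~\eqref{theo: maximum principle: 1} by a localization argument. First I would note that the variational SVIE~\eqref{prop: variational inequality: 2} is exactly the linear SVIE~\eqref{duality SVIE} with coefficients $C(t,s)=b_x(t,s)$ and $D_k(t,s)=\sigma^k_x(t,s)$ and free term
\[
	\varphi_1(t):=\int^t_0b_u(t,s)(u(s)-\hat{u}(s))\rd s+\sum^d_{k=1}\int^t_0\sigma^k_u(t,s)(u(s)-\hat{u}(s))\rd W_k(s),
\]
whereas the adjoint equation~\eqref{adjoint equation} is the dual BSVIE~\eqref{duality BSVIE} with these same coefficients and free term $\psi=h_x$. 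Since $\mu>\rho_{b,\sigma;x,u}\geq\rho_{C,D}$ and $-\mu+\lambda\geq\mu$ by~\eqref{mu lambda}, Theorem~\ref{theo: duality} applies and lets me rewrite the first term of~\eqref{prop: variational inequality: 1} as $\bE[\int^\infty_0e^{-\lambda t}\langle h_x(t),X_1(t)\rangle\rd t]=\bE[\int^\infty_0e^{-\lambda t}\langle\hat{Y}(t),\varphi_1(t)\rangle\rd t]$.

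Next I would expand $\langle\hat{Y}(t),\varphi_1(t)\rangle$ so as to move $\hat{Y}$ and $\hat{Z}$ onto $u-\hat{u}$. The drift part is handled by Fubini's theorem and $\langle\hat{Y}(t),b_u(t,s)w\rangle=\langle b_u(t,s)^\top\hat{Y}(t),w\rangle$; swapping the order of integration and relabelling $s\leftrightarrow t$ produces $\bE[\int^\infty_0e^{-\lambda t}\langle\int^\infty_te^{-\lambda(s-t)}b_u(s,t)^\top\hat{Y}(s)\rd s,\,u(t)-\hat{u}(t)\rangle\rd t]$. For the diffusion part I would invoke the adapted M-solution property $\hat{Y}(t)=\bE[\hat{Y}(t)]+\sum^d_{k=1}\int^t_0\hat{Z}^k(t,s)\rd W_k(s)$: the deterministic summand is orthogonal to the zero-mean stochastic integral defining the diffusion term of $\varphi_1$, and It\^o's isometry collapses the inner product to $\sum^d_{k=1}\int^t_0\langle\sigma^k_u(t,s)^\top\hat{Z}^k(t,s),u(s)-\hat{u}(s)\rangle\rd s$, which after Fubini and relabelling gives the $\hat{Z}$-contribution. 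Restoring the $h_u$-term, the variational inequality becomes $\bE[\int^\infty_0e^{-\lambda t}\langle\Lambda(t),u(t)-\hat{u}(t)\rangle\rd t]\geq0$ for every $u(\cdot)\in\cU_{-\mu}$, with $\Lambda(t):=h_u(t)+\int^\infty_te^{-\lambda(s-t)}\{b_u(s,t)^\top\hat{Y}(s)+\sum^d_{k=1}\sigma^k_u(s,t)^\top\hat{Z}^k(s,t)\}\rd s$.

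Finally I would localize. For fixed $v\in U$ and an $\cF_t$-adapted indicator $\1_A$, the perturbation $u(t)=\hat{u}(t)+\1_A(t)(v-\hat{u}(t))$ lies in $\cU_{-\mu}$ by convexity of $U$; inserting it conditions the inequality on $\cF_t$. The crucial measurability facts are that $b_u(s,t)$ and $\sigma^k_u(s,t)$ are $\cF_t$-measurable (the coefficients being deterministic and $\hat{X}(t),\hat{u}(t)$ adapted) and that $\hat{Z}(s,t)$ is $\cF_t$-measurable for $t\leq s$ because $\hat{Z}(s,\cdot)$ is adapted; consequently $\bE_t[\Lambda(t)]$ is precisely the bracketed process in~\eqref{theo: maximum principle: 1}, with $\hat{Y}(s)$ replaced by $\bE_t[\hat{Y}(s)]$ but $\hat{Z}^k(s,t)$ unchanged. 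Then $\bE[\int^\infty_0e^{-\lambda t}\1_A(t)\langle\bE_t[\Lambda(t)],v-\hat{u}(t)\rangle\rd t]\geq0$ for all adapted $A$; choosing $A=\{(t,\omega):\langle\bE_t[\Lambda(t)],v-\hat{u}(t)\rangle<0\}$ forces the integrand to vanish and yields the inequality for this fixed $v$. Letting $v$ range over a countable dense subset of $U$ and using continuity of $v\mapsto\langle\bE_t[\Lambda(t)],v-\hat{u}(t)\rangle$ gives~\eqref{theo: maximum principle: 1} for all $v\in U$ simultaneously. I expect the main obstacle to be this conditioning step: one must track the measurability of the Volterra kernels and of $\hat{Z}(s,t)$ and, via conditional Fubini together with the $L^2$-bounds secured by the choice of $(\mu,\lambda)$, justify interchanging $\bE_t[\cdot]$ with the $\rd s$-integral, which is exactly what produces the asymmetric appearance of $\bE_t[\hat{Y}(s)]$ and $\hat{Z}^k(s,t)$ in the statement.
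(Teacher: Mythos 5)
Your proposal is correct and follows essentially the same route as the paper: apply Proposition~\ref{prop: variational inequality}, identify the variational SVIE~\eqref{prop: variational inequality: 2} and the adjoint BSVIE~\eqref{adjoint equation} as a dual pair so that Theorem~\ref{theo: duality} rewrites $\bE[\int^\infty_0e^{-\lambda t}\langle h_x(t),X_1(t)\rangle\rd t]$, then use the adapted M-solution property, It\^{o}'s isometry, Fubini's theorem, and the $\cF_t$-measurability of $b_u(s,t)$, $\sigma^k_u(s,t)$, $\hat{Z}^k(s,t)$ (hence the tower property producing $\bE_t[\hat{Y}(s)]$) to reduce the variational inequality to an integrated inequality in $u(\cdot)-\hat{u}(\cdot)$. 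The only difference is that you spell out the final localization (adapted indicator perturbations plus a countable dense subset of $U$), which the paper compresses into the single sentence ``since $u(\cdot)\in\cU_{-\mu}$ is arbitrary''; this is a faithful expansion, not a different argument.
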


%% Proof

\begin{proof}
Let an arbitrary $u(\cdot)\in\cU_{-\mu}$ be fixed. We apply the duality principle (Theorem~\ref{theo: duality}) to the variational SVIE~\eqref{prop: variational inequality: 2} and the adjoint equation~\eqref{adjoint equation}. By using the definition of adapted M-solutions and Fubini's theorem, we have
\begin{align*}
	&\bE\Bigl[\int^\infty_0e^{-\lambda t}\langle h_x(t),X_1(t)\rangle\rd t\Bigr]\\
	&=\bE\Bigl[\int^\infty_0e^{-\lambda t}\Bigl\langle \hat{Y}(t),\int^t_0b_u(t,s)\bigl(u(s)-\hat{u}(s)\bigr)\rd s+\sum^d_{k=1}\int^t_0\sigma^k_u(t,s)\bigl(u(s)-\hat{u}(s)\bigr)\rd W_k(s)\Bigr\rangle\rd t\Bigr]\\
	&=\bE\Bigl[\int^\infty_0e^{-\lambda t}\int^t_0\bigl\langle b_u(t,s)^\top\hat{Y}(t),u(s)-\hat{u}(s)\bigr\rangle\rd s\rd t\\
	&\hspace{2cm}+\int^\infty_0e^{-\lambda t}\int^t_0\Bigl\langle\sum^d_{k=1}\sigma^k_u(t,s)^\top\hat{Z}^k(t,s),u(s)-\hat{u}(s)\Bigr\rangle\rd s\rd t\Bigr]\\
	&=\bE\Bigl[\int^\infty_0\Bigl\langle\int^\infty_te^{-\lambda s}\Bigl\{b_u(s,t)^\top\hat{Y}(s)+\sum^d_{k=1}\sigma^k_u(s,t)^\top\hat{Z}^k(s,t)\Bigr\}\rd s,u(t)-\hat{u}(t)\Bigr\rangle\rd t\Bigr]\\
	&=\bE\Bigl[\int^\infty_0e^{-\lambda t}\Bigl\langle\int^\infty_te^{-\lambda(s-t)}\Bigl\{b_u(s,t)^\top\bE_t\bigl[\hat{Y}(s)\bigr]+\sum^d_{k=1}\sigma^k_u(s,t)^\top\hat{Z}^k(s,t)\Bigr\}\rd s,u(t)-\hat{u}(t)\Bigr\rangle\rd t\Bigr].
\end{align*}
Thus, by the variational inequality \eqref{prop: variational inequality: 1}, we get
\begin{equation*}
	\bE\Bigl[\int^\infty_0e^{-\lambda t}\Bigl\langle h_u(t)+\int^\infty_te^{-\lambda(s-t)}\Bigl\{b_u(s,t)^\top\bE_t\bigl[\hat{Y}(s)\bigr]+\sum^d_{k=1}\sigma^k_u(s,t)^\top\hat{Z}^k(s,t)\Bigr\}\rd s,u(t)-\hat{u}(t)\Bigr\rangle\rd t\Bigr]\geq0.
\end{equation*}
Since $u(\cdot)\in\cU_{-\mu}$ is arbitrary, we see that \eqref{theo: maximum principle: 1} holds for a.e.\ $t\geq0$, a.s. This completes the proof.
\end{proof}

%% Remark

\begin{rem}
The above result can be rewritten by using a \emph{Hamiltonian} defined by
\begin{equation}\label{Hamiltonian}
	H_\lambda(t,x,u,p(\cdot),q(\cdot)):=h(t,x,u)+\int^\infty_te^{-\lambda(s-t)}\Bigl\{\langle b(s,t,x,u),p(s)\rangle+\sum^d_{k=1}\langle\sigma^k(s,t,x,u),q^k(s)\rangle\Bigr\}\rd s
\end{equation}
for $(t,x,u,p(\cdot),q(\cdot))\in[0,\infty)\times\bR^n\times U\times L^{2,-\mu}(0,\infty;\bR^n)\times L^{2,-\mu}(0,\infty;\bR^{n\times d})$. An interesting feature is that the Hamiltonian $H_\lambda$ is a \emph{functional} of the adjoint components $p(\cdot)\in L^{2,-\mu}(0,\infty;\bR^n)$ and $q(\cdot)\in L^{2,-\mu}(0,\infty;\bR^{n\times d})$ which are (deterministic) functions. The adjoint equation \eqref{adjoint equation} can be written as
\begin{equation}\label{adjoint equation Hamiltonian}
\begin{cases}
	\hat{Y}(t)=\partial_xH_\lambda\bigl(t,\hat{X}(t),\hat{u}(t),\bE_t\bigl[\hat{Y}(\cdot)\bigr],\hat{Z}(\cdot,t)\bigr),\\
	\hat{Y}(t)=\bE[\hat{Y}(t)]+\int^t_0\hat{Z}(t,s)\rd W(s),
\end{cases}
	\text{for a.e.}\ t\geq0,\ \text{a.s.},
\end{equation}
which determines the adapted process $\hat{Y}(\cdot)$ and the values of $\hat{Z}(t,s)$ for $(t,s)\in\Delta^\comp[0,\infty)$. The values of $\hat{Z}(t,s)$ for $(t,s)\in\Delta[0,\infty)$ is determined via the martingale representation theorem:
\begin{equation*}
	\partial_xH_\lambda\bigl(t,\hat{X}(t),\hat{u}(t),\hat{Y}(\cdot),\hat{Z}(\cdot,t)\bigr)=\bE_t\bigl[\partial_xH_\lambda\bigl(t,\hat{X}(t),\hat{u}(t),\hat{Y}(\cdot),\hat{Z}(\cdot,t)\bigr)\bigr]+\int^\infty_t\hat{Z}(t,s)\rd W(s)
\end{equation*}
for each $t\geq0$. Furthermore, the optimality condition \eqref{theo: maximum principle: 1} can be written as
\begin{equation}\label{optimality condition Hamiltonian}
	\bigl\langle\partial_uH_\lambda\bigl(t,\hat{X}(t),\hat{u}(t),\bE_t\bigl[\hat{Y}(\cdot)\bigr],\hat{Z}(\cdot,t)\bigr),u-\hat{u}(t)\bigr\rangle\geq0,\ \forall\,u\in U.
\end{equation}
\end{rem}

%%%%

\subsection{Sufficient conditions for optimality}

%%%%

In this subsection, we provide sufficient conditions for optimality in terms of the Hamiltonian $H_\lambda$ defined by \eqref{Hamiltonian}.

%% Theorem

\begin{theo}\label{theo: sufficient maximum principle}
Let Assumption~\ref{assum: control} hold, and suppose that $\mu,\lambda\in\bR$ satisfy the condition~\eqref{mu lambda}. Let $\hat{u}(\cdot)\in\cU_{-\mu}$ be a given control process, and denote the state process corresponding to $\hat{u}(\cdot)$ by $\hat{X}(\cdot):=X^{\hat{u}}(\cdot)$. Let $(\hat{Y}(\cdot),\hat{Z}(\cdot,\cdot))\in\cM^{2,-\mu}_\bF(0,\infty;\bR^n\times\bR^{n\times d})$ be the adapted M-solution of the infinite horizon BSVIE~\eqref{adjoint equation}. Assume that the function
\begin{equation*}
	\bR^n\times U\ni(x,u)\mapsto H_\lambda\bigl(t,x,u,\bE_t\bigl[\hat{Y}(\cdot)\bigr],\hat{Z}(\cdot,t)\bigr)\in\bR
\end{equation*}
is convex for a.e.\ $t\geq0$, a.s. Furthermore, assume that the condition \eqref{optimality condition Hamiltonian} holds for a.e.\ $t\geq0$, a.s. Then $\hat{u}(\cdot)$ is optimal.
\end{theo}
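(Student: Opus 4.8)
The plan is to verify $J_\lambda(u(\cdot))\geq J_\lambda(\hat{u}(\cdot))$ for an arbitrary $u(\cdot)\in\cU_{-\mu}$. Write $\overline{X}(\cdot):=X^u(\cdot)-\hat{X}(\cdot)\in L^{2,-\mu}_\bF(0,\infty;\bR^n)$ and, for $(t,s)\in\Delta^\comp[0,\infty)$, set $\overline{b}(t,s):=b(t,s,X^u(s),u(s))-b(t,s,\hat{X}(s),\hat{u}(s))$ and $\overline{\sigma}^k(t,s):=\sigma^k(t,s,X^u(s),u(s))-\sigma^k(t,s,\hat{X}(s),\hat{u}(s))$, so that $\overline{X}(\cdot)$ solves the forward SVIE
\begin{equation*}
	\overline{X}(t)=\int^t_0\overline{b}(t,s)\rd s+\sum^d_{k=1}\int^t_0\overline{\sigma}^k(t,s)\rd W_k(s),\ t\geq0.
\end{equation*}
Abbreviating $\mathcal{H}(t,x,u):=H_\lambda(t,x,u,\bE_t[\hat{Y}(\cdot)],\hat{Z}(\cdot,t))$ and using the definition~\eqref{Hamiltonian} of the Hamiltonian to express $h$ through $\mathcal{H}$, I would first rewrite the cost difference as
\begin{equation*}
	J_\lambda(u(\cdot))-J_\lambda(\hat{u}(\cdot))=\bE\Bigl[\int^\infty_0e^{-\lambda t}\bigl\{\mathcal{H}(t,X^u(t),u(t))-\mathcal{H}(t,\hat{X}(t),\hat{u}(t))\bigr\}\rd t\Bigr]-\mathcal{D},
\end{equation*}
where
\begin{equation*}
	\mathcal{D}:=\bE\Bigl[\int^\infty_0e^{-\lambda t}\int^\infty_te^{-\lambda(s-t)}\Bigl\{\langle\overline{b}(s,t),\bE_t[\hat{Y}(s)]\rangle+\sum^d_{k=1}\langle\overline{\sigma}^k(s,t),\hat{Z}^k(s,t)\rangle\Bigr\}\rd s\rd t\Bigr].
\end{equation*}
The conditions $\mu>\rho_{b,\sigma;x,u}$ and $\lambda\geq2\mu$, together with Lemma~\ref{lemm: coefficients SVIE} and the pairing estimate in Remark~\ref{rem: duality}~(iii), guarantee that every integral here converges absolutely.

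Next I would invoke the convexity hypothesis. For a.e.\ $t\geq0$, a.s., the gradient inequality for the map $(x,u)\mapsto\mathcal{H}(t,x,u)$, which is convex by assumption and continuously differentiable by Assumption~\ref{assum: control}, gives
\begin{equation*}
	\mathcal{H}(t,X^u(t),u(t))-\mathcal{H}(t,\hat{X}(t),\hat{u}(t))\geq\langle\partial_x\mathcal{H}(t,\hat{X}(t),\hat{u}(t)),\overline{X}(t)\rangle+\langle\partial_u\mathcal{H}(t,\hat{X}(t),\hat{u}(t)),u(t)-\hat{u}(t)\rangle.
\end{equation*}
By the reformulation~\eqref{adjoint equation Hamiltonian} of the adjoint equation---equivalently, by taking $\bE_t[\cdot]$ in \eqref{adjoint equation} and using that $\hat{Z}(s,t)$ is $\cF_t$-measurable for $s\geq t$---the first gradient equals $\hat{Y}(t)$, and by the assumed optimality condition~\eqref{optimality condition Hamiltonian} with $u=u(t)\in U$ the last inner product is nonnegative. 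Integrating, I obtain
\begin{equation*}
	J_\lambda(u(\cdot))-J_\lambda(\hat{u}(\cdot))\geq\bE\Bigl[\int^\infty_0e^{-\lambda t}\langle\hat{Y}(t),\overline{X}(t)\rangle\rd t\Bigr]-\mathcal{D},
\end{equation*}
so it remains to prove the duality-type identity $\bE[\int^\infty_0e^{-\lambda t}\langle\hat{Y}(t),\overline{X}(t)\rangle\rd t]=\mathcal{D}$.

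This last identity I would establish by the same computation as in the proof of Theorem~\ref{theo: duality}. Inserting the martingale representation $\hat{Y}(t)=\bE[\hat{Y}(t)]+\int^t_0\hat{Z}(t,s)\rd W(s)$ built into the definition of $\cM^{2,-\mu}_\bF(0,\infty;\bR^n\times\bR^{n\times d})$ and the forward equation for $\overline{X}$, then applying Fubini's theorem to the drift part and the It\^o isometry to the diffusion part (the pairing of the deterministic component $\bE[\hat{Y}(t)]$ with the stochastic integral vanishing in expectation), I get
\begin{equation*}
	\bE\bigl[\langle\hat{Y}(t),\overline{X}(t)\rangle\bigr]=\int^t_0\bE\bigl[\langle\hat{Y}(t),\overline{b}(t,s)\rangle\bigr]\rd s+\sum^d_{k=1}\int^t_0\bE\bigl[\langle\hat{Z}^k(t,s),\overline{\sigma}^k(t,s)\rangle\bigr]\rd s.
\end{equation*}
Multiplying by $e^{-\lambda t}$, integrating in $t$, and interchanging the order of integration turns $\int^\infty_0e^{-\lambda t}\int^t_0\cdots\rd s\rd t$ into $\int^\infty_0\int^\infty_te^{-\lambda s}\cdots\rd s\rd t$, which after relabeling is exactly $\mathcal{D}$ once I use that $\overline{b}(s,t)$ is $\cF_t$-measurable, so that $\bE[\langle\hat{Y}(s),\overline{b}(s,t)\rangle]=\bE[\langle\bE_t[\hat{Y}(s)],\overline{b}(s,t)\rangle]$ by the tower property. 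I expect the main obstacle to be the careful bookkeeping of integrability and the justification of the (stochastic) Fubini interchanges rather than any conceptual difficulty: once the weighted-$L^2$ memberships are secured through $\mu>\rho_{b,\sigma;x,u}$, $\lambda\geq2\mu$, Lemma~\ref{lemm: coefficients SVIE}, and Remark~\ref{rem: duality}~(iii), the cancellation $\bE[\int^\infty_0e^{-\lambda t}\langle\hat{Y}(t),\overline{X}(t)\rangle\rd t]=\mathcal{D}$ holds and yields $J_\lambda(u(\cdot))\geq J_\lambda(\hat{u}(\cdot))$, i.e.\ the optimality of $\hat{u}(\cdot)$.
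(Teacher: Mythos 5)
Your proposal is correct and follows essentially the same route as the paper's proof: both rewrite the cost difference through the Hamiltonian, identify the correction term $\mathcal{D}$ with $\bE\bigl[\int^\infty_0e^{-\lambda t}\langle\hat{Y}(t),X^u(t)-\hat{X}(t)\rangle\rd t\bigr]$ via the duality computation (Fubini, the M-solution representation $\hat{Y}(t)=\bE[\hat{Y}(t)]+\int^t_0\hat{Z}(t,s)\rd W(s)$, and It\^{o}'s isometry), and conclude with the convexity gradient inequality, the identification $\hat{Y}(t)=\partial_xH_\lambda$ from \eqref{adjoint equation Hamiltonian}, and the optimality condition \eqref{optimality condition Hamiltonian}. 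The only difference is the inessential ordering — you apply convexity before the duality identity, whereas the paper performs the duality rewriting first and invokes convexity last.
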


%% Proof

\begin{proof}
Take an arbitrary $u(\cdot)\in\cU_{-\mu}$, and denote the corresponding state process by $X(\cdot):=X^u(\cdot)$. Observe that
\begin{align*}
	&J_\lambda(u(\cdot))-J_\lambda(\hat{u}(\cdot))\\
	&=\bE\Bigl[\int^\infty_0e^{-\lambda t}\bigl\{h(t,X(t),u(t))-h(t,\hat{X}(t),\hat{u}(t))\bigr\}\rd t\Bigr]\\
	&=\bE\Bigl[\int^\infty_0e^{-\lambda t}\Bigl\{H_\lambda\bigl(t,X(t),u(t),\bE_t\bigl[\hat{Y}(\cdot)\bigr],\hat{Z}(\cdot,t)\bigr)-H_\lambda\bigl(t,\hat{X}(t),\hat{u}(t),\bE_t\bigl[\hat{Y}(\cdot)\bigr],\hat{Z}(\cdot,t)\bigr)\\
	&\hspace{2.5cm}-\int^\infty_te^{-\lambda(s-t)}\bigl\langle b(s,t,X(t),u(t))-b(s,t,\hat{X}(t),\hat{u}(t)),\bE_t\bigl[\hat{Y}(s)\bigr]\bigr\rangle\rd s\\
	&\hspace{2.5cm}-\int^\infty_te^{-\lambda(s-t)}\sum^d_{k=1}\bigl\langle \sigma^k(s,t,X(t),u(t))-\sigma^k(s,t,\hat{X}(t),\hat{u}(t)),\hat{Z}^k(s,t)\bigr\rangle\rd s\Bigr\}\rd t\Bigr].
\end{align*}
By using Fubini's theorem and the definition of adapted M-solutions, we see that
\begin{align*}
	&\bE\Bigl[\int^\infty_0e^{-\lambda t}\Bigl\{\int^\infty_te^{-\lambda(s-t)}\bigl\langle b(s,t,X(t),u(t))-b(s,t,\hat{X}(t),\hat{u}(t)),\bE_t\bigl[\hat{Y}(s)\bigr]\bigr\rangle\rd s\\
	&\hspace{2.5cm}+\int^\infty_te^{-\lambda(s-t)}\sum^d_{k=1}\bigl\langle \sigma^k(s,t,X(t),u(t))-\sigma^k(s,t,\hat{X}(t),\hat{u}(t)),\hat{Z}^k(s,t)\bigr\rangle\rd s\Bigr\}\rd t\Bigr]\\
	&=\bE\Bigl[\int^\infty_0e^{-\lambda t}\Bigl\langle\int^t_0\bigl(b(t,s,X(s),u(s))-b(t,s,\hat{X}(s),\hat{u}(s))\bigr)\rd s,\hat{Y}(t)\Bigr\rangle\rd t\\
	&\hspace{1.5cm}+\int^\infty_0e^{-\lambda t}\int^t_0\sum^d_{k=1}\bigl\langle \sigma^k(t,s,X(s),u(s))-\sigma^k(t,s,\hat{X}(s),\hat{u}(s)),\hat{Z}^k(t,s)\bigr\rangle\rd s\rd t\Bigr]\\
	&=\bE\Bigl[\int^\infty_0e^{-\lambda t}\Bigl\langle\int^t_0\bigl(b(t,s,X(s),u(s))-b(t,s,\hat{X}(s),\hat{u}(s))\bigr)\rd s\\
	&\hspace{3.5cm}+\int^t_0\bigl(\sigma(t,s,X(s),u(s))-\sigma(t,s,\hat{X}(s),\hat{u}(s))\bigr)\rd W(s),\hat{Y}(t)\Bigr\rangle\rd t\Bigr].
\end{align*}
Noting the controlled SVIE~\eqref{controlled SVIE} and the equation~\eqref{adjoint equation Hamiltonian}, we see that the last term in the above equalities is equal to
\begin{equation*}
	\bE\Bigl[\int^\infty_0e^{-\lambda t}\bigl\langle X(t)-\hat{X}(t),\partial_xH_\lambda\bigl(t,\hat{X}(t),\hat{u}(t),\bE_t\bigl[\hat{Y}(\cdot)\bigr],\hat{Z}(\cdot,t)\bigr)\bigr\rangle\rd t\Bigr].
\end{equation*} Consequently, we obtain
\begin{align*}
	J_\lambda(u(\cdot))-J_\lambda(\hat{u}(\cdot))&=\bE\Bigl[\int^\infty_0e^{-\lambda t}\Bigl\{H_\lambda\bigl(t,X(t),u(t),\bE_t\bigl[\hat{Y}(\cdot)\bigr],\hat{Z}(\cdot,t)\bigr)-H_\lambda\bigl(t,\hat{X}(t),\hat{u}(t),\bE_t\bigl[\hat{Y}(\cdot)\bigr],\hat{Z}(\cdot,t)\bigr)\\
	&\hspace{2.5cm}-\bigl\langle X(t)-\hat{X}(t),\partial_xH_\lambda\bigl(t,\hat{X}(t),\hat{u}(t),\bE_t\bigl[\hat{Y}(\cdot)\bigr],\hat{Z}(\cdot,t)\bigr)\bigr\rangle\Bigr\}\rd t\Bigr].
\end{align*}
Therefore, by the assumptions, it holds that
\begin{equation*}
	J_\lambda(u(\cdot))-J_\lambda(\hat{u}(\cdot))\geq\bE\Bigl[\int^\infty_0e^{-\lambda t}\bigl\langle u(t)-\hat{u}(t),\partial_uH_\lambda\bigl(t,\hat{X}(t),\hat{u}(t),\bE_t\bigl[\hat{Y}(\cdot)\bigr],\hat{Z}(\cdot,t)\bigr)\bigr\rangle\rd t\Bigr]\geq0.
\end{equation*}
Since $u(\cdot)\in\cU_{-\mu}$ is arbitrary, we see that
\begin{equation*}
	J_\lambda(\hat{u}(\cdot))=\inf_{u(\cdot)\in\cU_{-\mu}}J_\lambda(u(\cdot)),
\end{equation*}
and thus $\hat{u}(\cdot)$ is optimal. This completes the proof.
\end{proof}

%% Remark

\begin{rem}
The above arguments can be easily generalized to the case where $b(t,s,0,0)$ and $\sigma(t,s,0,0)$ are nonzero (see Remarks~\ref{rem: SVIE nonzero} and \ref{rem: BSVIE nonzero}). Also, the case of random coefficients $b$ and $\sigma$ (with suitable measurability and integrability conditions) can be treated by the same way as above.
\end{rem}

%%%%

\subsection{Examples}\label{subsection: example}

%%%%

In this subsection, we apply the above results to some concrete examples. First, we investigate an infinite horizon linear-quadratic (LQ for short) control problem for an SVIE. 

%% Example

\begin{exam}[Infinite horizon LQ stochastic control problems]\label{exam: LQ}
Let $U=\bR^\ell$. For each control process $u(\cdot)\in\cU_{-\mu}$, consider the following controlled linear SVIE:
\begin{equation}\label{controlled linear SVIE}
	X^u(t)=\varphi(t)+\int^t_0\bigl\{A(t,s)X^u(s)+B(t,s)u(s)\bigr\}\rd s+\sum^d_{k=1}\int^t_0\bigl\{C_k(t,s)X^u(s)+D_k(t,s)u(s)\bigr\}\rd W_k(s),\ t\geq0,
\end{equation}
where $\varphi(\cdot)\in L^{2,-\mu}_\bF(0,\infty;\bR^n)$ is a given initial process, and $A,C_k:\Delta^\comp[0,\infty)\to\bR^{n\times n}$ and $B,D_k:\Delta^\comp[0,\infty)\to\bR^{n\times\ell}$, $k=1,\dots,d$, are given measurable functions. Define a discounted quadratic cost functional by
\begin{equation}\label{quadratic cost}
	J_\lambda(u(\cdot)):=\frac{1}{2}\bE\Bigl[\int^\infty_0e^{-\lambda t}\bigl\{\langle M_1(t)X^u(t),X^u(t)\rangle+\langle M_2(t)u(t),u(t)\rangle+2\langle M_3(t)X^u(t),u(t)\rangle\bigr\}\rd t\Bigr],
\end{equation}
where $M_1:[0,\infty)\to\bR^{n\times n}$, $M_2:[0,\infty)\to\bR^{\ell\times\ell}$, and $M_3:[0,\infty)\to\bR^{\ell\times n}$ are given measurable and bounded functions. We assume that there exist $K_{b,x},K_{b,u}\in L^{1,*}(0,\infty;\bR_+)$ and $K_{\sigma,x},K_{\sigma,u}\in L^{2,*}(0,\infty;\bR_+)$ such that
\begin{align*}
	&|A(t,s)|\leq K_{b,x}(t-s),\ |B(t,s)|\leq K_{b,u}(t-s),\\
	&\Bigl(\sum^d_{k=1}|C_k(t,s)|^2\Bigr)^{1/2}\leq K_{\sigma,x}(t-s),\ \Bigl(\sum^d_{k=1}|D_k(t,s)|^2\Bigr)^{1/2}\leq K_{\sigma,u}(t-s),
\end{align*}
for a.e.\ $(t,s)\in\Delta^\comp[0,\infty)$, and that $M_1$ and $M_2$ take values in the spaces of symmetric matrices with suitable dimensions. Define $\rho_{b,\sigma;x,u}\in\bR$ by \eqref{domain: control}. We see that if $\mu>\rho_{b,\sigma;x,u}$ and $\lambda\geq2\mu$, then for any $u(\cdot)\in\cU_{-\mu}$, there exists a unique solution $X^u(\cdot)\in L^{2,-\mu}_\bF(0,\infty;\bR^n)$ of the controlled linear SVIE~\eqref{controlled linear SVIE}, and the discounted quadratic cost functional \eqref{quadratic cost} is well-defined. An infinite horizon LQ stochastic control problem is a problem to seek a control process $\hat{u}(\cdot)\in\cU_{-\mu}=L^{2,-\mu}_\bF(0,\infty;\bR^\ell)$ which minimizes $J_\lambda(u(\cdot))$ over all control processes $u(\cdot)\in\cU_{-\mu}$. The corresponding Hamiltonian is defined by
\begin{align*}
	H_\lambda(t,x,u,p(\cdot),q(\cdot))&=\frac{1}{2}\bigl\{\langle M_1(t)x,x\rangle+\langle M_2(t)u,u\rangle+2\langle M_3(t)x,u\rangle\bigr\}\\
	&\hspace{0.5cm}+\int^\infty_te^{-\lambda(s-t)}\Bigl\{\langle A(s,t)x+B(s,t)u,p(s)\rangle+\sum^d_{k=1}\langle C_k(s,t)x+D_k(s,t)u,q^k(s)\rangle\Bigr\}\rd s
\end{align*}
for each $(t,x,u,p(\cdot),q(\cdot))\in[0,\infty)\times\bR^n\times\bR^\ell\times L^{2,-\mu}(0,\infty;\bR^n)\times L^{2,-\mu}(0,\infty;\bR^{n\times d})$. Let $\hat{u}(\cdot)$ be given, and define $\hat{X}(\cdot):=X^{\hat{u}}(\cdot)\in L^{2,-\mu}_\bF(0,\infty;\bR^n)$. The corresponding adjoint equation~\eqref{adjoint equation} becomes
\begin{align*}
	&\hat{Y}(t)=M_1(t)\hat{X}(t)+M_3(t)^\top\hat{u}(t)+\int^\infty_te^{-\lambda(s-t)}\Bigl\{A(s,t)^\top\hat{Y}(s)+\sum^d_{k=1}C_k(s,t)^\top\hat{Z}^k(s,t)\Bigr\}\rd s\\
	&\hspace{2cm}-\int^\infty_t\hat{Z}(t,s)\rd W(s),\ t\geq0,
\end{align*}
which admits a unique adapted M-solution $(\hat{Y}(\cdot),\hat{Z}(\cdot,\cdot))\in\cM^{2,-\mu}_\bF(0,\infty;\bR^n\times\bR^{n\times d})$. Also, noting that $U=\bR^\ell$, the optimality condition \eqref{optimality condition Hamiltonian} becomes
\begin{equation}\label{exam: LQ: 1}
	M_2(t)\hat{u}(t)+M_3(t)\hat{X}(t)+\int^\infty_te^{-\lambda(s-t)}\Bigl\{B(s,t)^\top\bE_t\bigl[\hat{Y}(s)]+\sum^d_{k=1}D_k(s,t)^\top\hat{Z}^k(s,t)\Bigr\}\rd s=0.
\end{equation}
We note that if the symmetric matrix $\left(\begin{array}{cc}M_1(t)&M_3(t)^\top\\M_3(t)&M_2(t)\end{array}\right)$ is nonnegative-definite for a.e.\ $t\geq0$, then the function $(x,u)\mapsto H_\lambda(t,x,u,p(\cdot),q(\cdot))$ is convex for a.e.\ $t\geq0$ and any $p(\cdot)$ and $q(\cdot)$. In this case, by Theorems~\ref{theo: maximum principle} and \ref{theo: sufficient maximum principle}, we see that $\hat{u}(\cdot)$ is optimal if and only if \eqref{exam: LQ: 1} holds for a.e.\ $t\geq0$, a.s.
\end{exam}

%% Remark

\begin{rem}
In finite horizon settings, LQ control problems for SVIEs with non-singular coefficients were studied by Chen and Yong~\cite{ChYo07} and Wang~\cite{WaT18}. Also, Abi Jaber, Miller, and Pham~\cite{AbMiPh19} investigated a finite horizon LQ control problem for an SVIE with singular coefficients of convolution type. Compared with these papers, the above example provides a characterization of the optimal control of an infinite horizon LQ control problem for an SVIE with singular coefficients of non-convolution type. However, the optimality condition~\eqref{exam: LQ: 1} alone dose not give a (causal) state-feedback representation of the optimal control, and further analysis would be needed. We hope to report some further results on this problem in the near future.
\end{rem}

Next, we observe a classical SDEs case.

%% Example

\begin{exam}[Discounted control problems for SDEs]\label{exam: SDE}
If the coefficients $\varphi(t)=x_0\in\bR^n$, $b(t,s,x,u)=b(s,x,u)$, and $\sigma(t,s,x,u)=\sigma(s,x,u)$ are independent of $t$, then the stochastic control problem becomes a usual (infinite horizon) problem for a controlled SDE (see \cite{MaVe14,OrVe17}). Assume that $b(s,0,0)$ and $\sigma(s,0,0)$ are bounded, $b(s,x,u)$ and $\sigma(s,x,u)$ are continuously differentiable with respect to $(x,u)\in\bR^n\times\bR^\ell$ for a.e.\ $s\geq0$, and there exist four constants $L_{b,x},L_{b,u},L_{\sigma,x},L_{\sigma,u}>0$ such that
\begin{align*}
	&|b(s,x,u)-b(s,x',u')|\leq L_{b,x}|x-x'|+L_{b,u}|u-u'|,\\
	&|\sigma(s,x,u)-\sigma(s,x',u')|\leq L_{\sigma,x}|x-x'|+L_{\sigma,u}|u-u'|,
\end{align*}
for any $x,x'\in\bR^n$ and $u,u'\in\bR^\ell$, for a.e.\ $s\geq0$. In this case, the condition~\eqref{mu lambda} becomes $\mu>f^{-1}_1(1)$ and $\lambda\geq2\mu$, where $f_1:(0,\infty)\to(0,\infty)$ is defined by $f_1(\rho):=\frac{L_{b,x}}{\rho}+\frac{L_{\sigma,x}}{\sqrt{2\rho}}$ for $\rho\in(0,\infty)$. The Hamiltonian $H_\lambda$ is reduced to
\begin{align*}
	H_\lambda(t,x,u,p(\cdot),q(\cdot))&=h(t,x,u)+\Bigl\langle b(t,x,u),\int^\infty_te^{-\lambda(s-t)}p(s)\rd s\Bigr\rangle+\sum^d_{k=1}\Bigl\langle\sigma^k(t,x,u),\int^\infty_te^{-\lambda(s-t)}q^k(s)\rd s\Bigr\rangle\\
	&=\mathcal{H}\Bigl(t,x,u,\int^\infty_te^{-\lambda(s-t)}p(s)\rd s,\int^\infty_te^{-\lambda(s-t)}q(s)\rd s\Bigr),
\end{align*}
where $\mathcal{H}:[0,\infty)\times\bR^n\times U\times\bR^n\times\bR^{n\times d}\to\bR$ is the standard Hamiltonian defined by
\begin{equation*}
	\mathcal{H}(t,x,u,p,q):=h(t,x,u)+\langle b(t,x,u),p\rangle+\sum^d_{k=1}\langle\sigma^k(t,x,u),q^k\rangle
\end{equation*}
for $(t,x,u,p,q)\in[0,\infty)\times\bR^n\times U\times\bR^n\times\bR^{n\times d}$. Let $\hat{u}(\cdot)$ be given, and define $\hat{X}(\cdot):=X^{\hat{u}}(\cdot)\in L^{2,-\mu}_\bF(0,\infty;\bR^n)$. Then the adjoint equation \eqref{adjoint equation} can be written as
\begin{align*}
	&\hat{Y}(t)=\partial_xh(t,\hat{X}(t),\hat{u}(t))+\int^\infty_te^{-\lambda(s-t)}\Bigl\{\partial_xb(t,\hat{X}(t),\hat{u}(t))^\top\hat{Y}(s)+\sum^d_{k=1}\partial_x\sigma^k(t,\hat{X}(t),\hat{u}(t))^\top\hat{Z}^k(s,t)\Bigr\}\rd s\\
	&\hspace{3cm}-\int^\infty_t\hat{Z}(t,s)\rd W(s),\ t\geq0,
\end{align*}
which admits a unique adapted M-solution $(\hat{Y}(\cdot),\hat{Z}(\cdot,\cdot))\in\cM^{2,-\mu}_\bF(0,\infty;\bR^n\times\bR^{n\times d})$. Furthermore, the following holds:
\begin{equation}\label{adjoint equation SDE}
	\hat{Y}(t)=\partial_x\mathcal{H}\Bigl(t,\hat{X}(t),\hat{u}(t),\bE_t\Bigl[\int^\infty_te^{-\lambda(s-t)}\hat{Y}(s)\rd s\Bigr],\int^\infty_te^{-\lambda(s-t)}\hat{Z}(s,t)\rd s\Bigr)
\end{equation}
for a.e.\ $t\geq0$, a.s. Now we define $\hat{\cY}(t):=\bE_t[\int^\infty_te^{-\lambda(s-t)}\hat{Y}(s)\rd s]$ and $\hat{\cZ}(t):=\int^\infty_te^{-\lambda(s-t)}\hat{Z}(s,t)\rd s$ for $t\geq0$. By Lemma~\ref{lemm: BSVIE BSDE}, we have $(\hat{\cY}(\cdot),\hat{\cZ}(\cdot))\in L^{2,-\mu}_\bF(0,\infty;\bR^n)\times L^{2,-\mu}_\bF(0,\infty;\bR^{n\times d})$, and it is the unique adapted solution of the infinite horizon BSDE $\mathrm{d}\hat{\cY}(t)=-\{\hat{Y}(t)-\lambda\hat{\cY}(t)\}\rd t+\hat{\cZ}(t)\rd W(t)$, $t\geq0$. By inserting \eqref{adjoint equation SDE} into this formula, we see that $(\hat{\cY}(\cdot),\hat{\cZ}(\cdot))$ solves the following infinite horizon BSDE:
\begin{equation*}
	\mathrm{d}\hat{\cY}(t)=-\bigl\{\partial_x\mathcal{H}\bigl(t,\hat{X}(t),\hat{u}(t),\hat{\cY}(t),\hat{\cZ}(t)\bigr)-\lambda\hat{\cY}(t)\bigr\}\rd t+\hat{\cZ}(t)\rd W(t),\ t\geq0.
\end{equation*}
Conversely, if a pair $(\hat{\cY}(\cdot),\hat{\cZ}(\cdot))\in L^{2,-\mu}_\bF(0,\infty;\bR^n)\times L^{2,-\mu}_\bF(0,\infty;\bR^{n\times d})$ satisfies the above infinite horizon BSDE, then $(\hat{Y}(\cdot),\hat{Z}(\cdot,\cdot))\in\cM^{2,-\mu}_\bF(0,\infty;\bR^n\times\bR^{n\times d})$ defined by
\begin{equation*}
\begin{cases}
	\hat{Y}(t)=\partial_x\mathcal{H}\bigl(t,\hat{X}(t),\hat{u}(t),\hat{\cY}(t),\hat{\cZ}(t)\bigr),\\
	\int^\infty_0\hat{Z}(t,s)\rd W(s)=\partial_x\mathcal{H}\Bigl(t,\hat{X}(t),\hat{u}(t),\int^\infty_te^{-\lambda(s-t)}\hat{Y}(s)\rd s,\hat{\cZ}(t)\Bigr)\\
	\hspace{3.5cm}-\bE\Bigl[\partial_x\mathcal{H}\Bigl(t,\hat{X}(t),\hat{u}(t),\int^\infty_te^{-\lambda(s-t)}\hat{Y}(s)\rd s,\hat{\cZ}(t)\Bigr)\Bigr],
\end{cases}
\end{equation*}
satisfies \eqref{adjoint equation SDE}. Consequently, in this case, the optimality condition~\eqref{optimality condition Hamiltonian} is equivalent to the following condition:
\begin{equation*}
	\bigl\langle \partial_u\mathcal{H}\bigl(t,\hat{X}(t),\hat{u}(t),\hat{\cY}(t),\hat{\cZ}(t)\bigr),u-\hat{u}(t)\bigr\rangle\geq0,\ \forall\,u\in U.
\end{equation*}
Therefore, Theorems~\ref{theo: maximum principle} and \ref{theo: sufficient maximum principle} are reduced to the counterparts in SDEs cases~\cite{MaVe14,OrVe17}.
\end{exam}

The next example is concerned with a controlled dynamics with a fractional time derivative.

%% Example

\begin{exam}[Discounted control problems for fractional stochastic differential equations]\label{exam: FSDE}
Consider a minimization problem of a discounted cost functional $J_\lambda(u(\cdot))$ defined by \eqref{cost functional}. Suppose that the state process $X^u(\cdot)$ solves the following controlled \emph{Caputo fractional stochastic differential equation} (Caputo-FSDE for short):
\begin{equation}\label{FSDE}
	\begin{cases}
	\CD X^u(t)=b(t,X^u(t),u(t))+\sigma(t,X^u(t),u(t))\frac{\rd W(t)}{\rd t},\ t\geq0,\\
	X^u(0)=x_0,
	\end{cases}
\end{equation}
where $x_0\in\bR^n$ is a given initial state, $b:[0,\infty)\times\bR^n\times\bR^\ell\to\bR^n$ and $\sigma:[0,\infty)\times\bR^n\times\bR^\ell\to\bR^{n\times d}$ are given measurable maps satisfying the same assumptions as in Example~\ref{exam: SDE}, and $\CD$ denotes the Caputo fractional derivative of order $\alpha\in(\frac{1}{2},1)$ defined by
\begin{equation*}
	\CD f(t):=\frac{1}{\Gamma(1-\alpha)}\frac{\mathrm{d}}{\mathrm{d}t}\int^t_0(t-s)^{-\alpha}\{f(s)-f(0)\}\rd s,\ t\geq0,
\end{equation*}
for each differentiable function $f:[0,\infty)\to\bR$ (see \cite{KiSrTr06}). Here and elsewhere, $\Gamma(\alpha)=\int^\infty_0\tau^{\alpha-1}e^{-\tau}\rd\tau$ denotes the Gamma function. We say that $X^u(\cdot)\in L^{2,*}_\bF(0,\infty;\bR^n)$ is a solution of the controlled Caputo-FSDE~\eqref{FSDE} if it holds that
\begin{equation}\label{FSDE mild}
	X^u(t)=x_0+\frac{1}{\Gamma(\alpha)}\Bigl\{\int^t_0(t-s)^{\alpha-1}b(s,X^u(s),u(s))\rd s+\int^t_0(t-s)^{\alpha-1}\sigma(s,X^u(s),u(s))\rd W(s)\Bigr\}
\end{equation}
for a.e.\ $t\geq0$, a.s. For further discussions on the concept of solutions to Caputo-FSDEs, see \cite{WaXuKl16}. We note that \eqref{FSDE mild} can be seen as a controlled SVIE~\eqref{controlled SVIE} with the coefficients
\begin{equation*}
	b(t,s,x,u)=\frac{1}{\Gamma(\alpha)}(t-s)^{\alpha-1}b(s,x,u),\ \sigma(t,s,x,u)=\frac{1}{\Gamma(\alpha)}(t-s)^{\alpha-1}\sigma(s,x,u).
\end{equation*}
Note that the coefficients are singular at the diagonal $t=s$. In this case, we can take the functions $K_{b,x},K_{b,u},K_{\sigma,x},K_{\sigma,u}$ in Assumption~\ref{assum: control} as follows:
\begin{equation*}
	K_{b,x}(\tau)=\frac{L_{b,x}}{\Gamma(\alpha)}\tau^{\alpha-1},\ K_{b,u}(\tau)=\frac{L_{b,u}}{\Gamma(\alpha)}\tau^{\alpha-1}, K_{\sigma,x}(\tau)=\frac{L_{\sigma,x}}{\Gamma(\alpha)}\tau^{\alpha-1}, K_{\sigma,u}(\tau)=\frac{L_{\sigma,u}}{\Gamma(\alpha)}\tau^{\alpha-1}.
\end{equation*}
By simple calculations, we see that the condition~\eqref{mu lambda} becomes $\mu>f^{-1}_\alpha(1)\ \text{and}\ \lambda\geq2\mu$, where $f_\alpha:(0,\infty)\to(0,\infty)$ is defined by
\begin{equation*}
	f_\alpha(\rho):=[K_{b,x}]_1(\rho)+[K_{\sigma,x}]_2(\rho)=L_{b,x}\rho^{-\alpha}+L_{\sigma,x}\frac{\sqrt{\Gamma(2\alpha-1)}}{\Gamma(\alpha)}(2\rho)^{-(\alpha-1/2)},\ \rho\in(0,\infty).
\end{equation*}
Under the above conditions, for any control process $u(\cdot)\in\cU_{-\mu}$, there exists a unique solution $X^u(\cdot)\in L^{2,-\mu}_\bF(0,\infty;\bR^n)$ to the controlled Caputo-FSDE~\eqref{FSDE}, and the discounted cost functional $J_\lambda(u(\cdot))\in\bR$ is well-defined. The corresponding Hamiltonian $H_\lambda$ is defined by
\begin{align*}
	&H_\lambda(t,x,u,p(\cdot),q(\cdot))=h(t,x,u)+\frac{1}{\Gamma(\alpha)}\Bigl\{\Bigl\langle b(t,x,u),\int^\infty_te^{-\lambda(s-t)}(s-t)^{\alpha-1}p(s)\rd s\Bigr\rangle\\
	&\hspace{6cm}+\sum^d_{k=1}\Bigl\langle\sigma^k(t,x,u),\int^\infty_te^{-\lambda(s-t)}(s-t)^{\alpha-1}q^k(s)\rd s\Bigr\rangle\Bigr\}
\end{align*}
for $(t,x,u,p(\cdot),q(\cdot))\in[0,\infty)\times\bR^n\times U\times L^{2,-\mu}(0,\infty;\bR^n)\times L^{2,-\mu}(0,\infty;\bR^{n\times d})$. Let $\hat{u}(\cdot)\in\cU_{-\mu}$ be given, and define $\hat{X}(\cdot):=X^{\hat{u}}(\cdot)\in L^{2,-\mu}_\bF(0,\infty;\bR^n)$. Then the corresponding adjoint equation~\eqref{adjoint equation} becomes the following infinite horizon BSVIE:
\begin{align*}
	&\hat{Y}(t)=\partial_xh(t,\hat{X}(t),\hat{u}(t))+\frac{1}{\Gamma(\alpha)}\Bigl\{\partial_xb(t,\hat{X}(t),\hat{u}(t))^\top\int^\infty_te^{-\lambda(s-t)}(s-t)^{\alpha-1}\hat{Y}(s)\rd s\\
	&\hspace{5.5cm}+\sum^d_{k=1}\partial_x\sigma^k(t,\hat{X}(t),\hat{u}(t))^\top\int^\infty_te^{-\lambda(s-t)}(s-t)^{\alpha-1}\hat{Z}^k(s,t)\rd s\Bigr\}\\
	&\hspace{2cm}-\int^\infty_t\hat{Z}(t,s)\rd W(s),\ t\geq0,
\end{align*}
which admits a unique adapted M-solution $(\hat{Y}(\cdot),\hat{Z}(\cdot,\cdot))\in\cM^{2,-\mu}_\bF(0,\infty;\bR^n\times\bR^{n\times d})$. Furthermore, the optimality condition~\eqref{optimality condition Hamiltonian} becomes
\begin{equation}\label{FSDE optimality condition}
\begin{split}
	&\Bigl\langle\partial_uh(t,\hat{X}(t),\hat{u}(t))+\frac{1}{\Gamma(\alpha)}\Bigl\{\partial_ub(t,\hat{X}(t),\hat{u}(t))^\top\int^\infty_te^{-\lambda(s-t)}(s-t)^{\alpha-1}\bE_t\bigl[\hat{Y}(s)\bigr]\rd s\\
	&\hspace{2cm}+\sum^d_{k=1}\partial_u\sigma^k(t,\hat{X}(t),\hat{u}(t))^\top\int^\infty_te^{-\lambda(s-t)}(s-t)^{\alpha-1}\hat{Z}^k(s,t)\rd s\Bigr\},u-\hat{u}(t)\Bigr\rangle\geq0,\ \forall\,u\in U.
\end{split}
\end{equation}
Theorem~\ref{theo: maximum principle} implies that if $\hat{u}(\cdot)$ is optimal, then \eqref{FSDE optimality condition} holds for a.e.\ $t\geq0$, a.s. Conversely, by Theorem~\ref{theo: sufficient maximum principle}, if the map
\begin{align*}
	&(x,u)\mapsto h(t,x,u)+\frac{1}{\Gamma(\alpha)}\Bigl\{\Bigl\langle b(t,x,u),\int^\infty_te^{-\lambda(s-t)}(s-t)^{\alpha-1}\bE_t\bigl[\hat{Y}(s)\bigr]\rd s\Bigr\rangle\\
	&\hspace{4cm}+\sum^d_{k=1}\Bigl\langle\sigma^k(t,x,u),\int^\infty_te^{-\lambda(s-t)}(s-t)^{\alpha-1}\hat{Z}^k(s,t)\rd s\Bigr\rangle\Bigr\}
\end{align*}
is convex for a.e.\ $t\geq0$, a.s., and if \eqref{FSDE optimality condition} holds for a.e.\ $t\geq0$, a.s., then the control process $\hat{u}(\cdot)$ is optimal.
\end{exam}

%% Remark

\begin{rem}
The above observations are also valid in the cases of other fractional derivatives such as the Riemann--Liouville fractional derivative (see \cite{KiSrTr06}) and the Hilfer fractional derivative (see \cite{Hi00}). We remark that Lin and Yong~\cite{LiYo20} established Pontryagin's (necessary) maximum principle for a finite horizon control problem of a deterministic singular Volterra equation and applied it to fractional order ordinary differential equations of the Riemann--Liouville and Caputo types. Our result shown in the above example is a stochastic and infinite horizon version of \cite{LiYo20}.
\end{rem}

Lastly, we consider a controlled stochastic integro-differential equation of an It\^{o}--Volterra type, which can be seen as an SDE with unbounded delay. Since many systems arising from realistic models can be described as differential equations with unbounded delay (see \cite{DoHeHe11} and references cited therein), optimal control problems for integro-differential systems are important problems.

%% Example

\begin{exam}[Discounted control problems for stochastic integro-differential equations]\label{exam: integro-differential}
Consider a minimization problem of a discounted cost functional $J_\lambda(u(\cdot))$ defined by \eqref{cost functional}. Suppose that the state process $X^u(\cdot)$ solves the following controlled \emph{stochastic integro-differential equation}:
\begin{equation}\label{integro-differential}
	\begin{cases}
	\mathrm{d}X^u(t)=b\Bigl(t,X^u(t),u(t),\int^t_0A_1(t,s)X^u(s)\rd s,\int^t_0A_2(t,s)u(s)\rd s\Bigr)\rd t\\
	\hspace{2cm}+\sigma\Bigl(t,X^u(t),u(t),\int^t_0A_3(t,s)X^u(s)\rd s,\int^t_0A_4(t,s)u(s)\rd s\Bigr)\rd W(t),\ t\geq0,\\
	X^u(0)=x_0,
	\end{cases}
\end{equation}
where $x_0\in\bR^n$ is a given initial state, $A_1:\Delta^\comp[0,\infty)\to\bR^{m_1\times n}$, $A_2:\Delta^\comp[0,\infty)\to\bR^{m_2\times\ell}$, $A_3:\Delta^\comp[0,\infty)\to\bR^{m_3\times n}$, and $A_4:\Delta^\comp[0,\infty)\to\bR^{m_4\times\ell}$ are given measurable functions with $m_1,m_2,m_3,m_4\in\bN$, and $b:[0,\infty)\times\bR^n\times\bR^\ell\times\bR^{m_1}\times\bR^{m_2}\to\bR^n$ and $\sigma:[0,\infty)\times\bR^n\times\bR^\ell\times\bR^{m_3}\times\bR^{m_4}\to\bR^{n\times d}$ are given measurable maps. Similar equations without control were studied by, for example, Appleby~\cite{Ap03} and Mao and Riedle~\cite{MaRi06}. Also, Sakthivel, Nieto, and Mahmudov~\cite{SaNiMa10} investigated approximate controllability of both deterministic and stochastic integro-differential equations, which include delay of the state process alone (without delay of the control process). We note that the controlled stochastic integro-differential equation \eqref{integro-differential} includes not only delay of the state process $X^u(\cdot)$, but also delay of the control process $u(\cdot)$.

Assume that there exist $K_1,K_2,K_3,K_4\in L^{1,*}(0,\infty;\bR_+)$ such that $|A_i(t,s)|\leq K_i(t-s)$ for a.e.\ $(t,s)\in\Delta^\comp[0,\infty)$, $i=1,2,3,4$. Furthermore, assume that $b(t,0,0,0,0)$ and $\sigma(t,0,0,0,0)$ are bounded, $b(t,x,u,x_1,x_2)$ and $\sigma(t,x,u,x_3,x_4)$ are continuously differentiable with respect to $(x,u,x_1,x_2,x_3,x_4)\in\bR^n\times\bR^\ell\times\bR^{m_1}\times\bR^{m_2}\times\bR^{m_3}\times\bR^{m_4}$ for a.e.\ $t\geq0$, and there exist eight constants $L_{b,x},L_{b,u},L_{\sigma,x},L_{\sigma,u},L_i>0$, $i=1,2,3,4$, such that
\begin{align*}
	&|b(t,x,u,x_1,x_2)-b(t,x',u',x'_1,x'_2)|\leq L_{b,x}|x-x'|+L_{b,u}|u-u'|+L_1|x_1-x'_1|+L_2|x_2-x'_2|,\\
	&|\sigma(t,x,u,x_3,x_4)-\sigma(t,x',u',x'_3,x'_4)|\leq L_{\sigma,x}|x-x'|+L_{\sigma,u}|u-u'|+L_3|x_3-x'_3|+L_4|x_4-x'_4|,
\end{align*}
for any $(x,u,x_1,x_2,x_3,x_4),(x',u',x'_1,x'_2,x'_3,x'_4)\in\bR^n\times\bR^\ell\times\bR^{m_1}\times\bR^{m_2}\times\bR^{m_3}\times\bR^{m_4}$, for a.e.\ $t\geq0$. Let $N=n+m_1+m_2+m_3+m_4$, and define an $N$-dimensional auxiliary state process $\bX^u(\cdot)$ by
\begin{equation*}
	\bX^u(t):=\left(\begin{array}{c}X^u(t)\\\int^t_0A_1(t,s)X^u(s)\rd s\\\int^t_0A_2(t,s)u(s)\rd s\\\int^t_0A_3(t,s)X^u(s)\rd s\\\int^t_0A_4(t,s)u(s)\rd s\end{array}\right),\ t\geq0.
\end{equation*}
Then the stochastic integro-differential equation~\eqref{integro-differential} can be written as the following SVIE for $\bX^u(\cdot)$:
\begin{equation}\label{SVIE bb}
	\bX^u(t)=\bX_0+\int^t_0\tilde{b}(t,s,\bX^u(s),u(s))\rd s+\int^t_0\tilde{\sigma}(s,\bX^u(s),u(s))\rd W(s),\ t\geq0,
\end{equation}
where
\begin{align*}
	\bX_0:=\left(\begin{array}{c}x_0\\0\\0\\0\\0\end{array}\right)\in\bR^N,
\end{align*}
and $\tilde{b}:\Delta^\comp[0,\infty)\times\bR^N\times\bR^\ell\to\bR^N$ and $\tilde{\sigma}:[0,\infty)\times\bR^N\times\bR^\ell\to\bR^{N\times d}$ are defined by
\begin{equation*}
	\tilde{b}(t,s,\bX,u):=\left(\begin{array}{c}b(s,J_n\bX,u,J_{m_1}\bX,J_{m_2}\bX)\\A_1(t,s)J_n\bX\\A_2(t,s)u\\A_3(t,s)J_n\bX\\A_4(t,s)u\end{array}\right),\ \tilde{\sigma}(s,\bX,u):=\left(\begin{array}{c}\sigma(s,J_n\bX,u,J_{m_3}\bX,J_{m_4}\bX)\\0\\0\\0\\0\end{array}\right),
\end{equation*}
for $(t,s,\bX,u)\in\Delta^\comp[0,\infty)\times\bR^N\times\bR^\ell$. Here, $J_n\in\bR^{n\times N}$ and $J_{m_i}\in\bR^{m_i\times N}$, $i=1,2,3,4$, are defined by
\begin{equation*}
	J_n:=(I_n,0_{n\times m_1},0_{n\times m_2},0_{n\times m_3},0_{n\times m_4}),\ J_{m_1}:=(0_{m_1\times n},I_{m_1},0_{m_1\times m_2},0_{m_1\times m_3},0_{m_1\times m_4}),
\end{equation*}
and similar for $J_{m_2}$, $J_{m_3}$, and $J_{m_4}$. Also, the discounted cost functional can be written as
\begin{equation*}
	J_\lambda(u(\cdot))=\bE\Bigl[\int^\infty_0e^{-\lambda t}h(t,X^u(t),u(t))\rd t\Bigr]=\bE\Bigl[\int^\infty_0e^{-\lambda t}\tilde{h}(t,\bX^u(t),u(t))\rd t\Bigr]
\end{equation*}
with $\tilde{h}(t,\bX,u):=h(t,J_n\bX,u)$ for $(t,\bX,u)\in[0,\infty)\times\bR^N\times\bR^\ell$. By simple calculations show that the condition \eqref{mu lambda} becomes
\begin{equation*}
	\mu>0,\ [K_2]_1(\mu)+[K_4]_1(\mu)<\infty,\ \frac{L_{b,x}+L_1+L_2}{\mu}+\frac{L_{\sigma,x}+L_3+L_4}{\sqrt{2\mu}}+[K_1]_1(\mu)+[K_3]_1(\mu)<1,\ \lambda\geq2\mu.
\end{equation*}
In this case, for any $u(\cdot)\in\cU_{-\mu}$, there exists a unique solution $\bX^u(\cdot)\in L^{2,-\mu}_\bF(0,\infty;\bR^N)$ of the auxiliary controlled SVIE~\eqref{SVIE bb}, and the discounted cost functional $J_\lambda(u(\cdot))$ is well-defined. Note that $X^u(\cdot)=J_n\bX^u(\cdot)\in L^{2,-\mu}_\bF(0,\infty;\bR^n)$ is the unique solution of the original controlled stochastic integro-differential equation~\eqref{integro-differential}. The corresponding Hamiltonian for the auxiliary controlled SVIE~\eqref{SVIE bb} is defined by
\begin{align*}
	H_\lambda(t,\bX,u,p(\cdot),q(\cdot))&=\tilde{h}(t,\bX,u)+\int^\infty_te^{-\lambda(s-t)}\Bigl\{\langle\tilde{b}(s,t,\bX,u),p(s)\rangle+\sum^d_{k=1}\langle\tilde{\sigma}^k(t,\bX,u),q^k(s)\rangle\Bigr\}\rd s\\
	&=h(t,J_n\bX,u)+\Bigl\langle b(t,J_n\bX,u,J_{m_1}\bX,J_{m_2}\bX),\int^\infty_te^{-\lambda(s-t)}J_np(s)\rd s\Bigr\rangle\\
	&\hspace{0.7cm}+\int^\infty_te^{-\lambda(s-t)}\langle A_1(s,t)J_n\bX,J_{m_1}p(s)\rangle\rd s+\int^\infty_te^{-\lambda(s-t)}\langle A_2(s,t)u,J_{m_2}p(s)\rangle\rd s\\
	&\hspace{0.7cm}+\int^\infty_te^{-\lambda(s-t)}\langle A_3(s,t)J_n\bX,J_{m_3}p(s)\rangle\rd s+\int^\infty_te^{-\lambda(s-t)}\langle A_4(s,t)u,J_{m_4}p(s)\rangle\rd s\\
	&\hspace{0.7cm}+\sum^d_{k=1}\Bigl\langle\sigma^k(t,J_n\bX,u,J_{m_3}\bX,J_{m_4}\bX),\int^\infty_te^{-\lambda(s-t)}J_nq^k(s)\rd s\Bigr\rangle
\end{align*}
for $(t,\bX,u,p(\cdot),q(\cdot))\in[0,\infty)\times\bR^N\times U\times L^{2,-\mu}(0,\infty;\bR^N)\times L^{2,-\mu}(0,\infty;\bR^{N\times d})$. Let $\hat{u}(\cdot)\in\cU_{-\mu}$ be given, and define $\hat{X}(\cdot):=X^{\hat{u}}(\cdot)\in L^{2,-\mu}_\bF(0,\infty;\bR^n)$ and $\hat{\bX}(\cdot):=\bX^{\hat{u}}(\cdot)\in L^{2,-\mu}_\bF(0,\infty;\bR^N)$. The corresponding adjoint equation is the following infinite horizon BSVIE:
\begin{equation}\label{adjoint equation integro-differential}
\begin{split}
	&\hat{\bY}(t)=\partial_\bX \tilde{h}(t,\hat{\bX}(t),\hat{u}(t))+\int^\infty_te^{-\lambda(s-t)}\Bigl\{\partial_\bX\tilde{b}(s,t,\hat{\bX}(t),\hat{u}(t))^\top\hat{\bY}(s)+\sum^d_{k=1}\partial_\bX\tilde{\sigma}^k(t,\hat{\bX}(t),\hat{u}(t))^\top\hat{\bZ}^k(s,t)\Bigr\}\rd s\\
	&\hspace{3cm}-\int^\infty_t\hat{\bZ}(t,s)\rd W(s),\ t\geq0,
\end{split}
\end{equation}
which admits a unique adapted M-solution $(\hat{\bY}(\cdot),\hat{\bZ}(\cdot,\cdot))\in\cM^{2,-\mu}_\bF(0,\infty;\bR^N\times\bR^{N\times d})$. Also, the optimality condition is
\begin{equation}\label{optimal condition auxiliary}
\begin{split}
	&\Bigl\langle \partial_u\tilde{h}(t,\hat{\bX}(t),\hat{u}(t))+\int^\infty_te^{-\lambda(s-t)}\Bigl\{\partial_u\tilde{b}(s,t,\hat{\bX}(t),\hat{u}(t))^\top\bE_t\bigl[\hat{\bY}(s)\bigr]+\sum^d_{k=1}\partial_u\tilde{\sigma}^k(t,\hat{\bX}(t),\hat{u}(t))^\top\hat{\bZ}^k(s,t)\Bigr\}\rd s,\\
	&\hspace{5cm}u-\hat{u}(t)\Bigr\rangle\geq0,\ \forall\,u\in U.
\end{split}
\end{equation}

We further observe the adjoint equation~\eqref{adjoint equation integro-differential} and the optimality condition~\eqref{optimal condition auxiliary}. Define $(\hat{Y}(\cdot),\hat{Z}(\cdot,\cdot))\in\cM^{2,-\mu}_\bF(0,\infty;\bR^n\times\bR^{n\times d})$ by $\hat{Y}(\cdot):=J_n\hat{\bY}(\cdot)$ and $\hat{Z}(\cdot,\cdot):=J_n\hat{\bZ}(\cdot,\cdot)$. Also, for each $i=1,2,3,4$, define $(\hat{Y}_i(\cdot),\hat{Z}_i(\cdot,\cdot))\in\cM^{2,-\mu}_\bF(0,\infty;\bR^{m_i}\times\bR^{m_i\times d})$ by $\hat{Y}_i(\cdot):=J_{m_i}\hat{\bY}(\cdot)$ and $\hat{Z}_i(\cdot,\cdot):=J_{m_i}\hat{\bZ}(\cdot,\cdot)$. Then we have
\begin{equation}\label{Y equation}
\begin{split}
	&\hat{Y}(t)=\partial_xh(t,\hat{X}(t),\hat{u}(t))+\int^\infty_te^{-\lambda(s-t)}A_1(s,t)^\top\bE_t\bigl[\hat{Y}_1(s)\bigr]\rd s+\int^\infty_te^{-\lambda(s-t)}A_3(s,t)^\top\bE_t\bigl[\hat{Y}_3(s)\bigr]\rd s\\
	&\hspace{1cm}+\partial_xb\Bigl(t,\hat{X}(t),\hat{u}(t),\int^t_0A_1(t,s)\hat{X}(s)\rd s,\int^t_0A_2(t,s)\hat{u}(s)\rd s\Bigr)^\top\int^\infty_te^{-\lambda(s-t)}\bE_t\bigl[\hat{Y}(s)\bigr]\rd s\\
	&\hspace{1cm}+\sum^d_{k=1}\partial_x\sigma^k\Bigl(t,\hat{X}(t),\hat{u}(t),\int^t_0A_3(t,s)\hat{X}(s)\rd s,\int^t_0A_4(t,s)\hat{u}(s)\rd s\Bigr)^\top\int^\infty_te^{-\lambda(s-t)}\hat{Z}^k(s,t)\rd s,
\end{split}
\end{equation}
and
\begin{equation}\label{Yi equation}
\begin{split}
	&\hat{Y}_1(t)=\partial_{x_1}b\Bigl(t,\hat{X}(t),\hat{u}(t),\int^t_0A_1(t,s)\hat{X}(s)\rd s,\int^t_0A_2(t,s)\hat{u}(s)\rd s\Bigr)^\top\int^\infty_te^{-\lambda(s-t)}\bE_t\bigl[\hat{Y}(s)\bigr]\rd s,\\
	&\hat{Y}_2(t)=\partial_{x_2}b\Bigl(t,\hat{X}(t),\hat{u}(t),\int^t_0A_1(t,s)\hat{X}(s)\rd s,\int^t_0A_2(t,s)\hat{u}(s)\rd s\Bigr)^\top\int^\infty_te^{-\lambda(s-t)}\bE_t\bigl[\hat{Y}(s)\bigr]\rd s,\\
	&\hat{Y}_3(t)=\sum^d_{k=1}\partial_{x_3}\sigma^k\Bigl(t,\hat{X}(t),\hat{u}(t),\int^t_0A_3(t,s)\hat{X}(s)\rd s,\int^t_0A_4(t,s)\hat{u}(s)\rd s\Bigr)^\top\int^\infty_te^{-\lambda(s-t)}\hat{Z}^k(s,t)\rd s,\\
	&\hat{Y}_4(t)=\sum^d_{k=1}\partial_{x_4}\sigma^k\Bigl(t,\hat{X}(t),\hat{u}(t),\int^t_0A_3(t,s)\hat{X}(s)\rd s,\int^t_0A_4(t,s)\hat{u}(s)\rd s\Bigr)^\top\int^\infty_te^{-\lambda(s-t)}\hat{Z}^k(s,t)\rd s,
\end{split}
\end{equation}
for a.e.\ $t\geq0$, a.s. Also, the optimality condition~\eqref{optimal condition auxiliary} can be rewritten as
\begin{align*}
	&\Bigl\langle \partial_uh(t,\hat{X}(t),\hat{u}(t))+\int^\infty_te^{-\lambda(s-t)}\bigl\{A_2(s,t)^\top\bE_t\bigl[\hat{Y}_2(s)\bigr]+A_4(s,t)^\top\bE_t\bigl[\hat{Y}_4(s)\bigr]\bigr\}\rd s\\
	&\hspace{1cm}+\partial_ub\Bigl(t,\hat{X}(t),\hat{u}(t),\int^t_0A_1(t,s)\hat{X}(s)\rd s,\int^t_0A_2(t,s)\hat{u}(s)\rd s\Bigr)^\top\int^\infty_te^{-\lambda(s-t)}\bE_t\bigl[\hat{Y}(s)\bigr]\rd s\\
	&\hspace{1cm}+\sum^d_{k=1}\partial_u\sigma^k\Bigl(t,\hat{X}(t),\hat{u}(t),\int^t_0A_3(t,s)\hat{X}(s)\rd s,\int^t_0A_4(t,s)\hat{u}(s)\rd s\Bigr)^\top\int^\infty_te^{-\lambda(s-t)}\hat{Z}^k(s,t)\rd s,\\
	&\hspace{5cm}u-\hat{u}(t)\Bigr\rangle\geq0,\ \forall\,u\in U.
\end{align*}
Letting $\hat{\cY}(t):=\int^\infty_te^{-\lambda(s-t)}\bE_t[\hat{Y}(s)]\rd s$ and $\hat{\cZ}(t):=\int^\infty_te^{-\lambda(s-t)}\hat{Z}(s,t)\rd s$ for $t\geq0$, by Lemma~\ref{lemm: BSVIE BSDE}, we see that the pair $(\hat{\cY}(\cdot),\hat{\cZ}(\cdot))$ is in $L^{2,-\mu}_\bF(0,\infty;\bR^n)\times L^{2,-\mu}_\bF(0,\infty;\bR^{n\times d})$ and satisfies the infinite horizon BSDE $\mathrm{d}\hat{\cY}(t)=-\{\hat{Y}(t)-\lambda\hat{\cY}(t)\}\rd t+\hat{\cZ}(t)\rd W(t)$, $t\geq0$. By inserting \eqref{Y equation} and \eqref{Yi equation} into this formula, we see that $(\hat{\cY}(\cdot),\hat{\cZ}(\cdot))$ solves the following \emph{infinite horizon anticipated BSDE} of an It\^{o}--Volterra type:
\begin{equation}\label{anticipated BSDE}
\begin{split}
	\mathrm{d}\hat{\cY}(t)&=-\Bigl\{\partial_xh(t,\hat{X}(t),\hat{u}(t))\\
	&\hspace{1.5cm}+\int^\infty_te^{-\lambda(s-t)}A_1(s,t)^\top\bE_t\Bigl[\partial_{x_1}b\Bigl(s,\hat{X}(s),\hat{u}(s),\int^s_0A_1(s,r)\hat{X}(r)\rd r,\\
	&\hspace{7.5cm}\int^s_0A_2(s,r)\hat{u}(r)\rd r\Bigr)^\top\hat{\cY}(s)\Bigr]\rd s\\
	&\hspace{1.5cm}+\sum^d_{k=1}\int^\infty_te^{-\lambda(s-t)}A_3(s,t)^\top\bE_t\Bigl[\partial_{x_3}\sigma^k\Bigl(s,\hat{X}(s),\hat{u}(s),\int^s_0A_3(s,r)\hat{X}(r)\rd r,\\
	&\hspace{8cm}\int^s_0A_4(s,r)\hat{u}(r)\rd r\Bigr)^\top\hat{\cZ}^k(s)\Bigr]\rd s\\
	&\hspace{1.5cm}+\partial_xb\Bigl(t,\hat{X}(t),\hat{u}(t),\int^t_0A_1(t,s)\hat{X}(s)\rd s,\int^t_0A_2(t,s)\hat{u}(s)\rd s\Bigr)^\top\hat{\cY}(t)\\
	&\hspace{1.5cm}+\sum^d_{k=1}\partial_x\sigma^k\Bigl(t,\hat{X}(t),\hat{u}(t),\int^t_0A_3(t,s)\hat{X}(s)\rd s,\int^t_0A_4(t,s)\hat{u}(s)\rd s\Bigr)^\top\hat{\cZ}^k(t)\Bigr\}\rd t\\
	&\hspace{0.5cm}+\lambda\hat{\cY}(t)\rd t+\hat{\cZ}(t)\rd W(t),\ t\geq0.
\end{split}
\end{equation}
Conversely, suppose that $(\hat{\cY}(\cdot),\hat{\cZ}(\cdot))\in L^{2,-\mu}_\bF(0,\infty;\bR^n)\times L^{2,-\mu}_\bF(0,\infty;\bR^{n\times d})$ satisfies the infinite horizon anticipated BSDE \eqref{anticipated BSDE}. Define $(\hat{Y}(\cdot),\hat{Z}(\cdot,\cdot))\in\cM^{2,-\mu}_\bF(0,\infty;\bR^n\times\bR^{n\times d})$ by
\begin{equation*}
	\begin{cases}
	\hat{Y}(t)=\partial_xh(t,\hat{X}(t),\hat{u}(t))\\
	\hspace{1.5cm}+\int^\infty_te^{-\lambda(s-t)}A_1(s,t)^\top\bE_t\Bigl[\partial_{x_1}b\Bigl(s,\hat{X}(s),\hat{u}(s),\int^s_0A_1(s,r)\hat{X}(r)\rd r,\\
	\hspace{7.5cm}\int^s_0A_2(s,r)\hat{u}(r)\rd r\Bigr)^\top\hat{\cY}(s)\Bigr]\rd s\\
	\hspace{1.5cm}+\sum^d_{k=1}\int^\infty_te^{-\lambda(s-t)}A_3(s,t)^\top\bE_t\Bigl[\partial_{x_3}\sigma^k\Bigl(s,\hat{X}(s),\hat{u}(s),\int^s_0A_3(s,r)\hat{X}(r)\rd r,\\
	\hspace{8cm}\int^s_0A_4(s,r)\hat{u}(r)\rd r\Bigr)^\top\hat{\cZ}^k(s)\Bigr]\rd s\\
	\hspace{1.5cm}+\partial_xb\Bigl(t,\hat{X}(t),\hat{u}(t),\int^t_0A_1(t,s)\hat{X}(s)\rd s,\int^t_0A_2(t,s)\hat{u}(s)\rd s\Bigr)^\top\hat{\cY}(t)\\
	\hspace{1.5cm}+\sum^d_{k=1}\partial_x\sigma^k\Bigl(t,\hat{X}(t),\hat{u}(t),\int^t_0A_3(t,s)\hat{X}(s)\rd s,\int^t_0A_4(t,s)\hat{u}(s)\rd s\Bigr)^\top\hat{\cZ}^k(t),\\
	\int^\infty_0\hat{Z}(t,s)\rd W(s)=\hat{Y}(t)-\bE\bigl[\hat{Y}(t)\bigr],
	\end{cases}
\end{equation*}
for $t\geq0$ (which implies that $\hat{Z}(t,s)=0$ for a.e.\ $(t,s)\in\Delta[0,\infty)$, a.s.). Then \eqref{anticipated BSDE} is written by $\mathrm{d}\hat{\cY}(t)=-\{\hat{Y}(t)-\lambda\hat{\cY}(t)\}\rd t+\hat{\cZ}(t)\rd W(t)$, $t\geq0$. By the uniqueness part of Lemma~\ref{lemm: BSVIE BSDE}, we obtain $\hat{\cY}(t)=\bE_t[\int^\infty_te^{-\lambda(s-t)}\hat{Y}(s)\rd s]$ and $\hat{\cZ}(t)=\int^\infty_te^{-\lambda(s-t)}\hat{Z}(s,t)\rd s$ for a.e.\ $t\geq0$, a.s. Thus, defining $\hat{Y}_i(\cdot)\in L^{2,-\mu}_\bF(0,\infty;\bR^{m_i})$, $i=1,2,3,4$, by \eqref{Yi equation}, we see that $\hat{Y}(\cdot)$ satisfies \eqref{Y equation} for a.e.\ $t\geq0$, a.s. Now we define $(\hat{\bY}(\cdot),\hat{\bZ}(\cdot,\cdot))\in\cM^{2,-\mu}_\bF(0,\infty;\bR^N\times\bR^{N\times d})$ by
\begin{equation*}
	\begin{cases}
	\hat{\bY}(t)=\left(\begin{array}{c}\hat{Y}(t)\\\hat{Y}_1(t)\\\hat{Y}_2(t)\\\hat{Y}_3(t)\\\hat{Y}_4(t)\end{array}\right),\\
	\int^\infty_0\hat{\bZ}(t,s)\rd W(s)=\partial_\bX \tilde{h}(t,\hat{\bX}(t),\hat{u}(t))+\int^\infty_te^{-\lambda(s-t)}\partial_\bX\tilde{b}(s,t,\hat{\bX}(t),\hat{u}(t))^\top\hat{\bY}(s)\rd s\\
	\hspace{3.2cm}+\sum^d_{k=1}\partial_\bX\tilde{\sigma}^k(t,\hat{\bX}(t),\hat{u}(t))^\top J^\top_n\hat{\cZ}(t)\\
	\hspace{4cm}-\bE\Bigl[\partial_\bX \tilde{h}(t,\hat{\bX}(t),\hat{u}(t))+\int^\infty_te^{-\lambda(s-t)}\partial_\bX\tilde{b}(s,t,\hat{\bX}(t),\hat{u}(t))^\top\hat{\bY}(s)\rd s\\
	\hspace{4.8cm}+\sum^d_{k=1}\partial_\bX\tilde{\sigma}^k(t,\hat{\bX}(t),\hat{u}(t))^\top J^\top_n\hat{\cZ}(t)\Bigr],
	\end{cases}
\end{equation*}
for $t\geq0$. Then $(\hat{\bY}(\cdot),\hat{\bZ}(\cdot,\cdot))$ satisfies \eqref{adjoint equation integro-differential}, and it holds that
\begin{equation}\label{YZ anticipated BSDE}
	\hat{\cY}(t)=\bE_t\Bigl[\int^\infty_te^{-\lambda(s-t)}J_n\hat{\bY}(s)\rd s\Bigr]\ \text{and}\ \hat{\cZ}(t)=\int^\infty_te^{-\lambda(s-t)}J_n\hat{\bZ}(s,t)\rd s
\end{equation}
for a.e.\ $t\geq0$, a.s. By the uniqueness of the adapted M-solution to the infinite horizon BSVIE~\eqref{adjoint equation integro-differential}, we see that, for each control process $\hat{u}(\cdot)\in\cU_{-\mu}$ and the corresponding state process $\hat{X}(\cdot)=X^{\hat{u}}(\cdot)\in L^{2,-\mu}_\bF(0,\infty;\bR^n)$, the infinite horizon anticipated BSDE~\eqref{anticipated BSDE} admits a unique adapted solution $(\hat{\cY}(\cdot),\hat{\cZ}(\cdot))\in L^{2,-\mu}_\bF(0,\infty;\bR^n)\times L^{2,-\mu}_\bF(0,\infty;\bR^{n\times d})$ given by \eqref{YZ anticipated BSDE}. Furthermore, the optimality condition~\eqref{optimal condition auxiliary} is equivalent to the following condition:
\begin{equation}\label{integro-differential optimal}
\begin{split}
	&\Bigl\langle \partial_uh(t,\hat{X}(t),\hat{u}(t))+\int^\infty_te^{-\lambda(s-t)}A_2(s,t)^\top\bE_t\Bigl[\partial_{x_2}b\Bigl(s,\hat{X}(s),\hat{u}(s),\int^s_0A_1(s,r)\hat{X}(r)\rd r,\\
	&\hspace{9.5cm}\int^s_0A_2(s,r)\hat{u}(r)\rd r\Bigr)^\top\hat{\cY}(s)\Bigr]\rd s\\
	&\hspace{0.5cm}+\sum^d_{k=1}\int^\infty_te^{-\lambda(s-t)}A_4(s,t)^\top\bE_t\Bigl[\partial_{x_4}\sigma^k\Bigl(s,\hat{X}(s),\hat{u}(s),\int^s_0A_3(s,r)\hat{X}(r)\rd r,\\
	&\hspace{8cm}\int^s_0A_4(s,r)\hat{u}(r)\rd r\Bigr)^\top\hat{\cZ}^k(s)\Bigr]\rd s\\
	&\hspace{0.5cm}+\partial_ub\Bigl(t,\hat{X}(t),\hat{u}(t),\int^t_0A_1(t,s)\hat{X}(s)\rd s,\int^t_0A_2(t,s)\hat{u}(s)\rd s\Bigr)^\top\hat{\cY}(t)\\
	&\hspace{0.5cm}+\sum^d_{k=1}\partial_u\sigma^k\Bigl(t,\hat{X}(t),\hat{u}(t),\int^t_0A_3(t,s)\hat{X}(s)\rd s,\int^t_0A_4(t,s)\hat{u}(s)\rd s\Bigr)^\top\hat{\cZ}^k(t),u-\hat{u}(t)\Bigr\rangle\geq0,\ \forall\,u\in U.
\end{split}
\end{equation}
Consequently, by Theorem~\ref{theo: maximum principle}, if $\hat{u}(\cdot)$ is optimal, then \eqref{integro-differential optimal} holds for a.e.\ $t\geq0$, a.s. Conversely, by Theorem~\ref{theo: sufficient maximum principle}, if the map
\begin{align*}
	(x,x_1,x_2,x_3,x_4,u)\mapsto &h(t,x,u)+\bigl\langle b(t,x,u,x_1,x_2),\hat{\cY}(t)\bigr\rangle+\sum^d_{k=1}\bigl\langle\sigma^k(t,x,u,x_3,x_4),\hat{\cZ}^k(t)\bigr\rangle
\end{align*}
is convex in $(x,x_1,x_2,x_3,x_4,u)\in\bR^n\times\bR^{m_1}\times\bR^{m_2}\times\bR^{m_3}\times\bR^{m_4}\times U$ for a.e.\ $t\geq0$, a.s., and if \eqref{integro-differential optimal} holds for a.e.\ $t\geq0$, a.s., then $\hat{u}(\cdot)$ is optimal.
\end{exam}

\section*{Acknowledgments}
%The authors would like to thank the editor and the referees for their constructive comments and suggestions.
The author was supported by JSPS KAKENHI Grant Number 21J00460.

%\bibliography{reference}

\end{document}